\title{The micromagnetic energy \linebreak with Dzyaloshinskii-Moriya interaction \linebreak in a thin-film regime relevant for boundary vortices}
\author{{\Large Fran\c cois L'Official}\footnote{Institut de Math\' ematiques de Toulouse, UMR 5219, Universit\' e de Toulouse, CNRS, UPS, F-31062 Toulouse Cedex 9, France. Email: Francois.Lofficial@math.univ-toulouse.fr}}
\theoremstyle{definition}
\newtheorem{definition}{Definition}[section]
\theoremstyle{plain}
\newtheorem{theoreme}[definition]{Theorem}
\newtheorem{lemme}[definition]{Lemma}
\newtheorem{corollaire}[definition]{Corollary}
\newtheorem{proposition}[definition]{Proposition}
\theoremstyle{remark}
\newtheorem{remarque}[definition]{Remark}
\newtheorem{notation}[definition]{Notation}
\begin{document}

\maketitle

\begin{abstract}
We consider the three-dimensional micromagnetic model with Dzyaloshinskii-Moriya interaction in a thin-film regime. We prove the Gamma-convergence of the micromagnetic energy in the considered regime, for which the Gamma-limit energy is two-dimensional and relevant for boundary vortices. We then study local minimizers of the Gamma-limit energy and prove a uniqueness result in a certain setting.
\end{abstract}

\tableofcontents

\section{Introduction}

The modeling of small ferromagnetic materials is based on the theory of micromagnetics. The model states that a ferromagnetic device can be described by a three-dimensional vector field with values in $\mathbb{S}^2$, called magnetization, whose stable states correspond to local minimizers of the micromagnetic energy (see e.g.~\cite{HS98}). The associated variational problem is non-convex, non-local and multiscale.
In particular, the relation between length parameters -- as well intrinsic parameters of the magnetic material as geometric parameters -- allows to consider a lot of different asymptotic regimes. This leads to the formation of magnetic patterns, such as domain walls, vortices, etc. We are interested in a special regime relevant for boundary vortices.

\subsection{The general three-dimensional model}

We consider a ferromagnetic sample of cylindrical shape
\begin{equation*}
\Omega
:= \omega_\ell \times (0,t) \subset \mathbb{R}^3,
\end{equation*}
where $\omega_\ell \subset \mathbb{R}^2$ is a smooth bounded open set of typical length $\ell$ (for example, $\omega_\ell$ can be assumed to be a disk of diameter $\ell$).
The magnetization $m$ is a unitary three-dimensional vector field
\begin{equation*}
m \colon \Omega \rightarrow \mathbb{S}^2,
\end{equation*}
where $\mathbb{S}^2$ is the unit sphere in $\mathbb{R}^3$. In particular, the constraint $\left\vert m \right\vert = 1$ yields the non-convexity of the problem. 
The classical micromagnetic energy is defined as
\begin{equation*}
E_\mathrm{class}(m)
:= A^2 \int_{\Omega} \left\vert \nabla m \right\vert^2 \mathrm{d} x
+ \int_{\mathbb{R}^3} \left\vert \nabla u \right\vert^2 \mathrm{d} x
+ Q \int_{\Omega} \Phi(m) \ \mathrm{d} x
- 2 \int_{\Omega} H_\mathrm{ext} \cdot m \ \mathrm{d} x.
\end{equation*}
Let us explain briefly the four terms of $E_\mathrm{class}$. 
The first term is the exchange energy. It is generated by small-distance interactions in the sample ; roughly speaking, this energy favors the alignment of neighboring spins in the sample. The positive parameter $A$ is an intrinsic parameter of the ferromagnetic material, and is typically of the order of nanometers: we call it the exchange length. 
The second term is called magnetostatic or demagnetizing energy. This energy is generated by the large-distance interactions in the sample. It is in fact the energy generated by the magnetic field induced by magnetization. More precisely, the demagnetizing potential $u \in H^1(\mathbb{R}^3,\mathbb{R})$ satisfies
\begin{equation}
\label{eqdistrib-u}
\Delta u = \mathrm{div}(m \mathds{1}_{\Omega}) \ \text{ in the distributional sense in } \mathbb{R}^3,
\end{equation}
where $\mathds{1}_{\Omega}(x)=1$ if $x \in \Omega$, and $\mathds{1}_{\Omega}(x)=0$ elsewhere.
The third term is the anisotropy energy: it takes into account the anisotropy effects resulting from the crystalline structure of the sample. It involves the quality factor $Q>0$ (that is a second intrinsic parameter of the sample) and the function $\Phi \colon \mathbb{S}^2 \rightarrow \mathbb{R}_+$ which has some symmetry properties.
The fourth and last term is the external field or Zeeman energy: it is generated by an external magnetic field. It is the vector field
\begin{equation*}
H_\mathrm{ext} \colon \mathbb{R}^3 \rightarrow \mathbb{R}^3
\end{equation*}
and favours the alignment of the magnetization in the direction of the external magnetic field.

In this paper, we consider an additional term: the Dzyaloshinskii-Moriya interaction. This interaction was introduced in the fifties~\cite{Dzyalo} to describe the magnetization in some materials with few symmetry properties. We assume here that the Dzyaloshinskii-Moriya interaction density in three dimensions is defined as
\begin{equation}
\label{dmi_density}
D : \nabla m \wedge m
:= \sum_{j=1}^3 D_j \cdot \partial_j m \wedge m,
\end{equation}
where $D=(D_1,D_2,D_3) \in \mathbb{R}^{3 \times 3}$, $\cdot$ denotes the inner product in $\mathbb{R}^3$, and $\wedge$ denotes the cross product in $\mathbb{R}^3$.
Hence, we consider the micromagnetic energy $E(m)$ given as
\begin{equation}
\label{exprE}
\begin{split}
E(m)
&
:= A^2 \int_{\Omega} \left\vert \nabla m \right\vert^2 \mathrm{d} x
+ \int_{\Omega} D : \nabla m \wedge m \ \mathrm{d} x
+ \int_{\mathbb{R}^3} \left\vert \nabla u \right\vert^2 \mathrm{d} x
\\
& \quad
+ Q \int_{\Omega} \Phi(m) \ \mathrm{d} x
- 2 \int_{\Omega} H_\mathrm{ext} \cdot m \ \mathrm{d} x,
\end{split}
\end{equation}
with $D \in \mathbb{R}^{3 \times 3}$.
For more details about the components of the micromagnetic energy, especially physical interpretations, we refer to~\cite{Aharoni}, \cite{HS98} or~\cite{Dzyalo}.

The multiscale aspect of the micromagnetic energy \eqref{exprE} is obvious. Indeed, beside the tensor $D$ and the quality factor $Q$, three length paramaters of the ferromagnetic device interact together: the exchange length $A$, the planar diameter $\ell$ and the thickness $t$ of the sample.
From these parameters, we introduce the dimensionless parameters
\begin{equation*}
h := \frac{t}{\ell} \ \ \text{ and } \ \ d := \frac{A}{\ell}.
\end{equation*}
By letting $h$ tend to zero, the relative thickness of the ferromagnetic device tends to zero: it is a thin-film limit. The consequences concerning the magnetization and the micromagnetic energy depend on the relations between $h$ and $d$, i.e. on the thin-film asymptotic regime.

\subsection{A thin-film regime}

\subsubsection{Nondimensionalization in length}

In order to study the micromagnetic energy in a thin-film regime, it is convenient to nondimensionalize it in length ; in particular, we get from the three length parameters $A$, $\ell$ and $t$ only two dimensionless parameters $h$ and $d$ defined above.
We set
\begin{equation*}
\Omega_h := \frac{\Omega}{\ell} = \omega \times (0,h) \subset \mathbb{R}^3,
\end{equation*}
where $\omega := \frac{\omega_\ell}{\ell} \subset \mathbb{R}^2$ is a smooth bounded open set of typical length $1$ (for example, $\omega$ can be assumed to be the unit disk in $\mathbb{R}^2$).
To each $x=(x_1,x_2,x_3) \in \Omega$, we associate $\widehat{x} = \frac{x}{\ell} \in \Omega_h$ and we set $\widehat{D}=\frac{1}{\ell}D$. We also consider the maps $m_h \colon \Omega_h \rightarrow \mathbb{S}^2$, $u_h \colon \mathbb{R}^3 \rightarrow \mathbb{R}$ and $H_{\mathrm{ext},h} \colon \mathbb{R}^3 \rightarrow \mathbb{R}^3$ such that, for every $\widehat{x}=\frac{x}{\ell} \in \Omega_h$,
\begin{equation*}
m_h(\widehat{x})=m(x),
\ \ \
u_h(\widehat{x})=\frac{1}{\ell}u(x),
\end{equation*}
that satisfy
\begin{equation}
\Delta u_h = \mathrm{div}(m_h\mathds{1}_{\Omega_h}) \ \text{ in the distributional sense in } \mathbb{R}^3,
\label{eqdistrib-uh}
\end{equation}
and
\begin{equation*}
H_{\mathrm{ext},h}(\widehat{x})=H_{\mathrm{ext}}(x).
\end{equation*}
The micromagnetic energy \eqref{exprE} can then be written in terms of $m_h$:
\begin{equation}
\label{energymh}
\begin{split}
\widehat{E}(m_h)
& = \ell^3 \left[
d^2 \int_{\Omega_h} \vert \nabla m_h \vert^2 \mathrm{d} \widehat{x}
+ \int_{\Omega_h} \widehat{D} : \nabla m_h \wedge m_h \ \mathrm{d} \widehat{x}
\right.
\\
& \quad \qquad
+ \int_{\mathbb{R}^3} \vert \nabla u_h \vert^2 \mathrm{d} \widehat{x}
\\
& \quad \qquad \left.
+ Q \int_{\Omega_h} \Phi(m_h) \ \mathrm{d} \widehat{x}
- 2 \int_{\Omega_h} H_{\mathrm{ext},h} \cdot m_h \ \mathrm{d} \widehat{x}
\right].
\end{split}
\end{equation}
For simplicity of the notations, we write $x$ instead of $\widehat{x}$ in the following.

\subsubsection{Heuristic approach of a thin-film regime}

The thin-film model is characterized by the assumption $h=t/\ell \rightarrow 0$, i.e. the variations of the magnetization $m$ with respect to the vertical component $x_3$ are strongly penalized.
With this in mind, we assume for a while that $m_h$ does not depend on $x_3$, i.e.
\begin{equation}
\label{hyp_heuristic1}
m_h(x_1,x_2,x_3)=m_h(x_1,x_2) \colon \omega \rightarrow \mathbb{S}^2,
\end{equation}
and that
\begin{equation}
\label{hyp_heuristic2}
m_h \text{ varies on length scales } \gg h.
\end{equation}
Moreover, we also assume that the external magnetic field $H_{\mathrm{ext},h}$ is in-plane, invariant in $x_3$ and independent of $h$, i.e.
\begin{equation}
\label{hyp_heuristic3}
H_{\mathrm{ext},h}(x_1,x_2,x_3) = (H'_{\mathrm{ext},h}(x_1,x_2),0).
\end{equation}
The Maxwell equation \eqref{eqdistrib-uh} implies
\begin{equation*}
\Delta u_h
= \mathrm{div}(m_h\mathds{1}_{\Omega_h})
= \mathrm{div}(m_h)\mathds{1}_{\Omega_h} - (m_h \cdot \nu) \mathds{1}_{\partial\Omega_h}
= \mathrm{div}'(m'_h)\mathds{1}_{\Omega_h} - (m_h \cdot \nu) \mathds{1}_{\partial\Omega_h}
\end{equation*}
in the distributional sense in $\mathbb{R}^3$, where $\nu$ is the outer unit normal vector on $\partial\Omega_h$, $m_h'=(m_{h,1},m_{h,2})$ and $\mathrm{div}'(m'_h)=\partial_1m_{h,1}+\partial_2m_{h,2}$. In other words, $u_h$ is a solution of the problem
\begin{equation*}
\left\lbrace
\begin{array}{rcll}
\Delta u_h & = & \mathrm{div}'(m'_h) & \text{ in } \Omega_h,
\\ \Delta u_h & = & 0 & \text{ in } \mathbb{R}^3 \setminus \Omega_h,
\\ \left[ \frac{\partial u_h}{\partial \nu} \right] & = & m_h \cdot \nu & \text{ on } \partial \Omega_h,
\end{array}
\right.
\end{equation*}
where $[a]=a^+-a^-$ stands for the jump of $a$ with respect to the outer unit normal vector $\nu$ on $\partial \Omega_h$.
From~\cite[Section~2.1.2]{IgnatHDR} (see also~\cite{DKMO04}), by assuming $u_h \in H^1(\mathbb{R}^3)$, we can express the stray-field energy by considering the Fourier transform in the horizontal variables:
\begin{equation*}
\begin{split}
\int_{\mathbb{R}^3} \left\vert \nabla u_h \right\vert^2 \mathrm{d} x
& =
h \int_{\mathbb{R}^2} \frac{\left\vert \xi' \cdot \mathcal{F}(m'_h\mathds{1}_{\omega})(\xi') \right\vert^2}{\left\vert \xi' \right\vert^2} \left( 1 - g_h \left( \left\vert \xi' \right\vert \right) \right) \mathrm{d} \xi'
\\
& \quad
+ h \int_{\mathbb{R}^2} \left\vert \mathcal{F} (m_{h,3} \mathds{1}_{\omega})(\xi') \right\vert^2 g_h \left( \left\vert \xi' \right\vert \right) \mathrm{d} \xi',
\end{split}
\end{equation*}
where $\mathcal{F}$ stands for the Fourier transform in $\mathbb{R}^2$, i.e. for every $f \colon \mathbb{R}^2 \rightarrow \mathbb{C}$ and for every $\xi' \in \mathbb{R}^2$,
\begin{equation*}
\mathcal{F}(f)(\xi')
= \hat{f}(\xi')
= \int_{\mathbb{R}^2} f(x')e^{- 2i\pi x' \cdot \xi'} \mathrm{d} x',
\end{equation*}
and
\begin{equation}
\label{exprgh}
g_h(\vert \xi' \vert) = \frac{1-e^{-2\pi h \vert \xi' \vert}}{2\pi h \vert \xi' \vert}.
\end{equation}

\begin{remarque}
\label{rem_gh}
The function $g_h$ satisfies the following useful properties.
\\
For every $h>0$ and $\xi' \in \mathbb{R}^2$, $e^{-2\pi h \vert \xi' \vert} \in (0,1]$, hence $g_h(\vert \xi' \vert) \geqslant 0$. Moreover, for every $\xi' \in \mathbb{R}^2$,
\begin{equation*}
e^{-2\pi h \vert \xi' \vert}
= 1 - 2\pi h \vert \xi' \vert + 2\pi^2 h^2 \left\vert \xi' \right\vert^2 + o(h^2) \ \ \ \text{ when } h \rightarrow 0,
\end{equation*}
thus $(g_h)_{h>0}$ converges almost everywhere to $1$ in $\mathbb{R}^2$ and, for every $\xi' \in \mathbb{R}^2$, for every $h>0$, $g_h(\vert \xi' \vert)$ \linebreak is bounded independently of $h$.
\end{remarque}

In the asymptotics $h \rightarrow 0$, we have $g_h \left( \left\vert \xi' \right\vert \right) \rightarrow 1$ and $1-g_h \left( \left\vert \xi' \right\vert \right) \rightarrow 0$, hence we can approximate
\begin{equation*}
\int_{\mathbb{R}^3} \left\vert \nabla u_h \right\vert^2 \mathrm{d} x
\approx h^2 \int_{\mathbb{R}^2} \frac{\left\vert \xi' \cdot \mathcal{F}(m'_h\mathds{1}_{\omega})(\xi') \right\vert^2}{\left\vert \xi' \right\vert} \mathrm{d} \xi'
+ h \int_{\omega} m_{h,3}^2 \ \mathrm{d} x'.
\end{equation*}
A more precise approach (see~\cite{DKMO04},~\cite{KS05}) is given by
\begin{equation}
\label{heuristic_strayfield}
\begin{split}
\int_{\mathbb{R}^3} \left\vert \nabla u_h \right\vert^2 \mathrm{d} x
& \approx h^2 \left\Vert \mathrm{div}'(m'_h)_{cont} \right\Vert_{\dot{H}^{-1/2}(\mathbb{R}^2)}^2
\\
& \quad + \dfrac{1}{2\pi} h^2 \left\vert \log h \right\vert \int_{\partial \omega} \left( m'_h \cdot \nu' \right)^2 \mathrm{d} \mathcal{H}^1
+ h \int_{\omega} m_{h,3}^2 \ \mathrm{d} x',
\end{split}
\end{equation}
where $\mathrm{div}'(m'_h)_{cont} = \mathrm{div}'(m'_h)\mathds{1}_\omega$ and $\nu'$ is the outer unit normal vector on $\partial\omega$. Hence, the stray-field energy is asymptotically decomposed in three terms in the thin-film approximation. The first term penalizes the volume charges, as an homogeneous $\dot{H}^{-1/2}$ seminorm, and favors N\'eel walls. The second term takes in account the lateral charges on the cylindrical sample and favors boundary vortices. The third term penalizes the surface charges on the top and bottom of the cylinder, and leads to interior vortices. For more details on the different types of singularities that can occur in thin-film regimes, we refer to~\cite{DKMO04} or~\cite{IgnatHDR}.
Combining~\eqref{heuristic_strayfield} with the assumptions~\eqref{hyp_heuristic1},~\eqref{hyp_heuristic2} and~\eqref{hyp_heuristic3}, we get the following approximation for the reduced two-dimensional thin-film energy:
\begin{equation}
\label{heuristic_energy}
\begin{split}
\widehat{E}(m_h)
& \approx
\ell^3 \left[
d^2h \int_\omega \vert \nabla'm_h \vert^2 \mathrm{d} x'
+ h \int_\omega \widehat{D}' : \nabla'm_h \wedge m_h \ \mathrm{d} x'
\right.
\\
& \quad \qquad
+ \frac{h^2}{2} \left\Vert \mathrm{div}'(m'_h)_{cont} \right\Vert^2_{\dot{H}^{-1/2}(\mathbb{R}^2)}
\\
& \quad \qquad
+ \frac{1}{2\pi}h^2 \left\vert \log h \right\vert \int_{\partial\omega} (m'_h \cdot \nu')^2 \mathrm{d} \mathcal{H}^1
\\
& \quad \qquad
\left.
+ h \int_\omega \left( m_{h,3}^2 +Q\Phi(m_h)-2H_{\mathrm{ext},h}' \cdot m'_h \right) \mathrm{d} x'
\right],
\end{split}
\end{equation}
with $\widehat{D}'=(\widehat{D}_1,\widehat{D}_2)$ and $\widehat{D}' : \nabla'm_h \wedge m_h = \sum_{j=1}^2 \widehat{D}_j \cdot \partial_j m_h \wedge m_h$.

The expression~\eqref{heuristic_energy} is interesting because it allows us to see the diversity of thin-film regimes. By \textit{thin-film regime}, we mean an asymptotic relation between $h$ and $d$ when $h \rightarrow 0$. In the following, we want to consider a thin-film regime that favors boundary vortices, while taking account of the Dzyaloshinskii-Moriya interaction, the anisotropy and the external magnetic field. Hence, regarding~\eqref{heuristic_energy}, we renormalize by $h^2 \left\vert \log h \right\vert$ so that
\begin{equation*}
\frac{d^2}{h \left\vert \log h \right\vert} \rightarrow 1,
\ \ 
\frac{\widehat{D}_{13}}{h \left\vert \log h \right\vert} \rightarrow 1,
\ \ 
\frac{\widehat{D}_{23}}{h \left\vert \log h \right\vert}
\rightarrow 1,
\ \ 
\frac{Q}{h \left\vert \log h \right\vert} \rightarrow 1,
\ \ 
\frac{H'_{\mathrm{ext},h}}{h \left\vert \log h \right\vert} \rightarrow 1,
\end{equation*}
and the remaining components in $\widehat{D}$ are coefficients of terms involving $m_{h,3}$, hence they must vanish at the thin-film limit.

Our assumptions are based on the regime studied by Kohn and Slastikov~\cite{KS05} for which the magnetization develops boundary vortices ; the new point here being that we take in account the Dzyaloshinskii-Moriya interaction. By letting $\frac{d^2}{h \left\vert \log h \right\vert}$ tend to zero, we obtain the regime studied by Kurkze~\cite{Kurzke06} and Ignat-Kurzke~\cite{IK21},~\cite{IK22}. Boundary vortices can occur in other thin-film regimes (see Moser~\cite{Moser04}). We should also mention the recent work of Davoli, Di Fratta, Praetorius and Ruggeri~\cite{DDFPR20} for finding a thin-film limit of the micromagnetic energy with Dzyaloshinskii-Moriya interaction, that is similar to what we do in this paper. Recently, Alama, Bronsard and Golovaty~\cite{ABG20} discovered a special type of boundary vortices, called boojums, that appear in a thin-film model of nematic liquid crystals. That type of boundary vortices costs less energy than the one we study in this paper, so that our "classical" boundary vortices are not present in their model. For studies on the N\'eel walls, we refer to Ignat~\cite{Ignat09}, Ignat-Moser~\cite{IM16},~\cite{IM19}, Ignat-Otto~\cite{IO08},~\cite{IO12} and Melcher~\cite{Melcher03},~\cite{Melcher04}.

Coming back to a general magnetization $m_h$ depending on $x_3$ and $h$, we introduce the $x_3$-average of $m_h$ in the following Section~\ref{subsection_notations}. This quantity will be useful for reducing the three-dimensional general model to a two-dimensional model.

\subsection{Notations}
\label{subsection_notations}

We use the same notation $\cdot$ for the inner product both in $\mathbb{R}^3$ and in $\mathbb{R}^2$. We use the same notation~$\wedge$ for the cross product in $\mathbb{R}^3$, i.e.
\begin{equation*}
\left(
\begin{matrix}
a_1
\\
a_2
\\
a_3
\end{matrix}
\right)
\wedge
\left(
\begin{matrix}
b_1
\\
b_2
\\
b_3
\end{matrix}
\right)
=
\left(
\begin{matrix}
a_2 b_3 - a_3 b_2
\\
a_3 b_1 - a_1 b_3
\\
a_1 b_2 - a_2 b_1
\end{matrix}
\right),
\end{equation*}
and for the determinant in $\mathbb{R}^2$, i.e.
\begin{equation*}
\binom{a_1}{a_2}
\wedge
\binom{b_1}{b_2}
= a_1 b_2 - a_2 b_1.
\end{equation*}

When it is relevant, we use the identification $\mathbb{R}^2 \simeq \mathbb{C}$. We denote by $\Re(\cdot)$ and $\Im(\cdot)$ the real and imaginary parts of a complex number.

\noindent
Note that, for $a=(a_1,a_2)=a_1+ia_2$ and $b=(b_1,b_2)=b_1+ib_2$, we have
$a \wedge b
= \Im(\overline{a}b)$. We denote
\begin{equation*}
\mathbb{R}_+^2 := \mathbb{R} \times (0,+\infty), \ \ \ \text{ and } \ \ \ \overline{\mathbb{R}_+^2} := \mathbb{R} \times [0,+\infty).
\end{equation*}
For $r>0$, we denote by $B_r$ the disk
\begin{equation*}
B_r := B(0,r)= \left\lbrace (x_1,x_2) \in \mathbb{R}^2 : x_1^2+x_2^2 < r \right\rbrace,
\end{equation*}
and by $B_r^+$ the upper-half disk
\begin{equation*}
B_r^+ := \left\lbrace (x_1,x_2) \in \mathbb{R}_+^2 : x_1^2+x_2^2 < r \right\rbrace.
\end{equation*}

For a three-dimensional quantity $q=(q_1,q_2,q_3)$, we use a prime $'$ to call the in-plane component of $q$, i.e. $q'=(q_1,q_2)$ and $q=(q',q_3)$. Note that when the framework is two-dimensional and no confusion is possible, e.g. in Section \ref{section2}, we remove the primes from all notations.

As already defined in \eqref{dmi_density} for the Dzyaloshinskii-Moriya interaction density, we denote
\begin{equation*}
F : \nabla f \wedge f = \sum\limits_{j=1}^3 F_j \cdot \partial_j f \wedge f
\end{equation*}
for $F \in \mathbb{R}^{3 \times 3}$ and $f \colon \mathbb{R}^3 \rightarrow \mathbb{R}^3$. We also denote
\begin{equation*}
G : \nabla' f \wedge f = \sum\limits_{j=1}^2 G_j \cdot \partial_j f \wedge f
\end{equation*}
for $G \in \mathbb{R}^{3 \times 2}$ and $f \colon \mathbb{R}^3 \rightarrow \mathbb{R}^3$.

Most of the time, the constants (often denoted by $C$) can change from line to line in the calculations. Finally, the formulations "a sequence $(m_h)_{h>0}$ / when $h \rightarrow 0$" have to be understood as "a sequence $(m_{h_n})_{n \in \mathbb{N}}$ / with $h_n \rightarrow 0$ as $n \rightarrow +\infty$".

\begin{notation}
\label{notation_mhmean}
For $h>0$, we denote by $\overline{m}_h \colon \omega \rightarrow \mathbb{R}^3$ the $x_3$-average of $m_h$, i.e.
\begin{equation}
\label{def_mhmean}
\overline{m}_h(x') = \frac{1}{h} \int_0^h m_h(x',x_3) \ \mathrm{d} x_3
\end{equation}
for every $x' \in \omega$, and we denote by $\overline{u}_h \colon \mathbb{R}^3 \rightarrow \mathbb{R}$ the associated stray field potential given by
\begin{equation}
\label{eqdistrib-uhbar}
\Delta \overline{u}_h = \mathrm{div}(\overline{m}_h\mathds{1}_{\Omega_h}) \ \text{ in the distributional sense in } \mathbb{R}^3.
\end{equation}
Note that the unit-length constraint is convexified by averaging, thus $\left\vert \overline{m}_h \right\vert \leqslant 1$.
\end{notation}

\subsection{Main results}

Our aim consists in studying the energy $\widehat{E}(m_h)$, given in \eqref{energymh}, in a thin-film regime governed by the main assumptions that $\frac{d^2}{h \left\vert \log h \right\vert}$, $\frac{\widehat{D}_{13}}{h \left\vert \log h \right\vert}$, $\frac{\widehat{D}_{23}}{h \left\vert \log h \right\vert}$, $\frac{Q}{h \left\vert \log h \right\vert}$ and $\frac{H_{\mathrm{ext},h}}{h \left\vert \log h \right\vert}$ are of order $1$. More precisely, assuming that all parameters are functions in $h$, we consider here the regime
\begin{equation}
\begin{split}
& h \ll 1,
\ \ \frac{d^2}{h \left\vert \log h \right\vert} \rightarrow \alpha,
\ \ \frac{Q}{h \left\vert \log h \right\vert} \rightarrow \beta, \ \ \frac{\widehat{D}_{13}}{d^2} \rightarrow 2\delta_1,
\ \ \frac{\widehat{D}_{23}}{d^2} \rightarrow 2\delta_2,
\\
& \frac{1}{h \left\vert \log h \right\vert} \sum_{j,k=1}^2 \vert \widehat{D}_{jk} \vert \ll 1,
\ \ \frac{1}{h \left\vert \log h \right\vert} \sum_{k=1}^3 \vert \widehat{D}_{3k} \vert \ll 1,
\end{split}
\label{regime1.1}
\end{equation}
where $\alpha,\beta>0$, $\delta_1,\delta_2 \in \mathbb{R}$ and $\widehat{D}=(\widehat{D}_{jk})_{(j,k) \in \left\lbrace 1,2,3 \right\rbrace^2} \in \mathbb{R}^{3 \times 3}$.

\subsubsection{Gamma-convergence of the micromagnetic energy with Dzyaloshinskii-Moriya interaction in a thin-film regime for boundary vortices}
\ \\
We consider the rescaled energy
\begin{equation}
\label{Ehmhchp1}
E_h(m_h) := \frac{\widehat{E}(m_h)}{\ell^3 h^2 \left\vert \log h \right\vert}.
\end{equation}
For every $h>0$, we define $\widetilde{m}_h \colon \Omega_1 = \omega \times (0,1) \rightarrow \mathbb{S}^2$ and $\widetilde{H}_{\mathrm{ext},h} \colon \mathbb{R}^3 \rightarrow \mathbb{R}^3$ to be such that, for every $(x',x_3) \in \Omega_1$,  
\begin{equation}
\label{rel_mhtilde_mh}
\widetilde{m}_h(x',x_3)=m_h(x',hx_3),
\end{equation}
and
\begin{equation}
\label{rel_Hexthtilde_Hexth}
\widetilde{H}_{\mathrm{ext},h}(x',x_3)=H_{\mathrm{ext},h}(x',hx_3).
\end{equation}
Moreover, we assume that
\begin{equation}
\left( \frac{\widetilde{H}_{\mathrm{ext},h}}{h \left\vert \log h \right\vert} \right)_{h>0} \text{ converges in } L^1(\Omega_1) \text{ to } \gamma \widetilde{H}_{\mathrm{ext},0},
\label{regime1.2}
\end{equation}
where $\gamma \in \mathbb{R}$ and $\widetilde{H}_{\mathrm{ext},0} \colon \mathbb{R}^3 \rightarrow \mathbb{R}^3$ is independent of $x_3$. Setting finally
\begin{equation*}
\widetilde{E}_h(\widetilde{m}_h)=E_h(m_h),
\end{equation*}
we have (see Section~\ref{subsection1.1} for details):
\begin{equation}
\begin{split}
\widetilde{E}_h(\widetilde{m}_h)
& = \frac{d^2}{h \left\vert \log h \right\vert} \int_{\Omega_1} \left( \vert \nabla'\widetilde{m}_h \vert^2 + \frac{1}{h^2} \left\vert \partial_3\widetilde{m}_h \right\vert^2 \right) \mathrm{d} x
\\ & \quad
+ \frac{1}{h \left\vert \log h \right\vert} \int_{\Omega_1} \widehat{D}' : \nabla' \widetilde{m}_h \wedge \widetilde{m}_h \ \mathrm{d} x
\\ & \quad
+ \frac{1}{h^2 \left\vert \log h \right\vert} \int_{\Omega_1} \widehat{D}_3 \cdot \partial_3 \widetilde{m}_h \wedge \widetilde{m}_h \ \mathrm{d} x
\\ & \quad
+ \frac{1}{h^2 \left\vert \log h \right\vert} \int_{\mathbb{R}^3} \left\vert \nabla u_h \right\vert^2 \mathrm{d} x
\\ & \quad + \frac{Q}{h \left\vert \log h \right\vert} \int_{\Omega_1} \Phi(\widetilde{m}_h) \ \mathrm{d} x
- \frac{2}{h \left\vert \log h \right\vert} \int_{\Omega_1} \widetilde{H}_{\mathrm{ext},h} \cdot \widetilde{m}_h \ \mathrm{d} x.
\end{split}
\label{exprEh}
\end{equation}
We prove the following statement of $\Gamma$-convergence for $\widetilde{E}_h$. It justifies that in the regime~\eqref{regime1.1}+\eqref{regime1.2}, we obtain a reduction from a 3D model to a 2D model by Gamma-convergence.

\begin{theoreme}
\label{GC}
Consider the regime~\eqref{regime1.1}$+$\eqref{regime1.2}. Let $\nu$ be the outer unit normal vector on $\partial \Omega_1$ and~$\delta=(\delta_1,\delta_2)$. Let $\widetilde{E}_h$ be given in~\eqref{exprEh}. Then the following statements hold:
\\
\textbf{(i) Compactness and lower bound.}
\\
Assume that there exists a constant $C>0$ such that, for every $h>0$, $\widetilde{E}_h(\widetilde{m}_h) \leqslant C$. Then, for a subsequence, $(\widetilde{m}_h)_{h>0}$ converges weakly to $\widetilde{m}_0$ in $H^1(\Omega_1,\mathbb{S}^2)$, where $\widetilde{m}_0$ is independent of $x_3$ and satisfies $\widetilde{m}_{0,3} \equiv 0$. Moreover,
\begin{equation*}
\liminf\limits_{h \rightarrow 0} \widetilde{E}_h(\widetilde{m}_h)
\geqslant \widetilde{E}_0(\widetilde{m}_0),
\end{equation*}
with
\begin{equation}
\label{exprE0}
\begin{split}
\widetilde{E}_0(\widetilde{m})
& = \alpha \left[
\int_{\Omega_1} \vert \nabla' \widetilde{m} \vert^2 \mathrm{d} x
+ 2 \int_{\Omega_1} \delta \cdot \nabla' \widetilde{m}' \wedge \widetilde{m}' \ \mathrm{d} x
\right]
\\
& \quad
+ \frac{1}{2\pi} \int_{\partial \omega} \left( \widetilde{m} \cdot \nu \right)^2 \mathrm{d} \mathcal{H}^1
+ \beta \int_{\Omega_1} \Phi(\widetilde{m}) \ \mathrm{d} x
- 2 \gamma \int_{\Omega_1} \widetilde{H}_{\mathrm{ext},0} \cdot \widetilde{m} \ \mathrm{d} x,
\end{split}
\end{equation}
if $\widetilde{m} \in H^1(\Omega_1,\mathbb{S}^2)$ is independent of $x_3$ and satisfies $\widetilde{m}_3 \equiv 0$, and $\widetilde{E}_0(\widetilde{m})=+\infty$ elsewhere.
\\
\textbf{(ii) Upper bound.}
\\
Let $\widetilde{m}_0 \in H^1(\Omega_1,\mathbb{S}^1)$ be such that $\widetilde{m}_0$ is independent of $x_3$ and $\widetilde{m}_{0,3} \equiv 0$. Then there exists a sequence $(\widetilde{m}_h)_{h>0}$ that converges strongly to $\widetilde{m}_0$ in $H^1(\Omega_1,\mathbb{S}^1)$ and satisfies
\begin{equation*}
\lim\limits_{h \rightarrow 0} \widetilde{E}_h(\widetilde{m}_h)
= \widetilde{E}_0(\widetilde{m}_0),
\end{equation*}
where $\widetilde{E}_0$ is given by~\eqref{exprE0}.
\end{theoreme}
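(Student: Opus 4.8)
The plan is to follow the usual two-step scheme of $\Gamma$-convergence: first the compactness and lower-bound inequality \textbf{(i)}, then the construction of a recovery sequence \textbf{(ii)}.

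\medskip
\noindent\textbf{Compactness.} The idea is to analyze $\widetilde{E}_h(\widetilde{m}_h)$ term by term. From $\widetilde{E}_h(\widetilde{m}_h)\leqslant C$ one first extracts $H^1$-bounds: since $\frac{d^2}{h|\log h|}\to\alpha>0$ and $\frac{Q}{h|\log h|}\to\beta>0$, while by~\eqref{regime1.1} the coefficients $\frac{\widehat{D}_{jk}}{h|\log h|}$ are bounded and all but $\frac{\widehat{D}_{13}}{h|\log h|},\frac{\widehat{D}_{23}}{h|\log h|}$ tend to $0$, a Young/Cauchy--Schwarz absorption (using $|\widetilde{m}_h|=1$ to bound the Zeeman, anisotropy and DMI densities pointwise, the delicate point being the absorption of the $\widehat{D}_3$-term into $\frac{d^2}{h^3|\log h|}\int_{\Omega_1}|\partial_3\widetilde{m}_h|^2$, which is licit since $\frac{|\widehat{D}_3|}{\sqrt{d^2\,h|\log h|}}\to0$) yields $\|\nabla'\widetilde{m}_h\|_{L^2(\Omega_1)}\leqslant C$, $\|\partial_3\widetilde{m}_h\|_{L^2(\Omega_1)}^2\leqslant Ch^2$ and $\int_{\mathbb{R}^3}|\nabla u_h|^2\leqslant Ch^2|\log h|$. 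Hence, up to a subsequence, $\widetilde{m}_h\rightharpoonup\widetilde{m}_0$ weakly in $H^1(\Omega_1)$ and a.e., so $|\widetilde{m}_0|=1$ a.e., and $\partial_3\widetilde{m}_0=0$ because $\partial_3\widetilde{m}_h\to0$ in $L^2$.

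\medskip
\noindent\textbf{The stray-field reduction.} The crucial device, which produces both $\widetilde{m}_{0,3}\equiv0$ and the boundary term, is that the $h^{-2}$-weighted penalization of $\partial_3\widetilde{m}_h$ makes $m_h$ essentially $x_3$-independent: by the Poincaré--Wirtinger inequality on $(0,h)$, $\|m_h-\overline{m}_h\|_{L^2(\Omega_h)}^2\leqslant Ch^2\int_{\Omega_h}|\partial_3m_h|^2\leqslant Ch^3$, and since $\big(\int_{\mathbb{R}^3}|\nabla\,\cdot\,|^2\big)^{1/2}$ is a seminorm of the charge dominated by its $L^2(\Omega_h)$-norm, the bound $\int_{\mathbb{R}^3}|\nabla u_h|^2=O(h^2|\log h|)$ together with $h^{3/2}\ll h|\log h|^{1/2}$ gives $\int_{\mathbb{R}^3}|\nabla u_h|^2=\int_{\mathbb{R}^3}|\nabla\overline{u}_h|^2+o(h^2|\log h|)$, hence $\liminf_h\frac{1}{h^2|\log h|}\int_{\mathbb{R}^3}|\nabla u_h|^2\geqslant\liminf_h\frac{1}{h^2|\log h|}\int_{\mathbb{R}^3}|\nabla\overline{u}_h|^2$. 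As $\overline{m}_h$ is $x_3$-independent on $\Omega_h$, the Fourier representation recalled before Remark~\ref{rem_gh} applies to $\overline{u}_h$; using $\overline{m}_h\to\widetilde{m}_0$ in $L^2(\omega)$, $\overline{m}_h'\rightharpoonup\widetilde{m}_0'$ in $H^1(\omega)$ (so that the boundary charges $\overline{m}_h'\cdot\nu'$ converge strongly in $L^2(\partial\omega)$ by the compact trace embedding), $g_h\to1$ a.e.\ and $g_h\geqslant0$, Fatou's lemma applied to the $\overline{m}_{h,3}$-term forces $\widetilde{m}_{0,3}\equiv0$ (otherwise $\frac{1}{h|\log h|}\int|\mathcal{F}(\overline{m}_{h,3}\mathds{1}_\omega)|^2g_h$ would diverge), and the analysis of the $\overline{m}_h'$-term, following~\cite{KS05}, gives $\liminf_h\frac{1}{h^2|\log h|}\int_{\mathbb{R}^3}|\nabla\overline{u}_h|^2\geqslant\frac{1}{2\pi}\int_{\partial\omega}(\widetilde{m}_0\cdot\nu)^2\,\mathrm{d}\mathcal{H}^1$: the volume charge $(\mathrm{div}'\overline{m}_h')\mathds{1}_\omega$ stays bounded in $\dot{H}^{-1/2}$ and becomes negligible after division by $|\log h|$, the cross term is handled by Cauchy--Schwarz, and the boundary charge produces the logarithm and the sharp constant $\frac{1}{2\pi}$.

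\medskip
\noindent\textbf{Rest of the lower bound, and the upper bound.} For the other terms: the exchange term satisfies $\liminf\geqslant\alpha\int_{\Omega_1}|\nabla'\widetilde{m}_0|^2$ by weak lower semicontinuity; the $\partial_3$-exchange term is nonnegative; the $\widehat{D}_3$-term tends to $0$ (by~\eqref{regime1.1}, $\frac{|\widehat{D}_3|}{h^2|\log h|}\|\partial_3\widetilde{m}_h\|_{L^2}\leqslant C\frac{|\widehat{D}_3|}{h|\log h|}\to0$); the $\widehat{D}'$-term converges, because only $\widehat{D}_{13},\widehat{D}_{23}$ survive, multiplying $\partial_1\widetilde{m}_h'\wedge\widetilde{m}_h'$ and $\partial_2\widetilde{m}_h'\wedge\widetilde{m}_h'$, for which weak--strong $L^2$ convergence yields the limit $2\alpha\int_{\Omega_1}\delta\cdot\nabla'\widetilde{m}_0'\wedge\widetilde{m}_0'$; and the anisotropy and Zeeman terms converge to $\beta\int_{\Omega_1}\Phi(\widetilde{m}_0)$ and $-2\gamma\int_{\Omega_1}\widetilde{H}_{\mathrm{ext},0}\cdot\widetilde{m}_0$ (continuity of $\Phi$ and dominated convergence; and~\eqref{regime1.2} together with $|\widetilde{m}_h|=1$). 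Summing the $\liminf$'s gives $\liminf_h\widetilde{E}_h(\widetilde{m}_h)\geqslant\widetilde{E}_0(\widetilde{m}_0)$, and $\widetilde{m}_0\in H^1(\Omega_1,\mathbb{S}^1)$ since $|\widetilde{m}_0|=1$ and $\widetilde{m}_{0,3}\equiv0$. For \textbf{(ii)} I would take the constant recovery sequence $\widetilde{m}_h:=\widetilde{m}_0$, which converges strongly in $H^1(\Omega_1,\mathbb{S}^1)$; running the same computations with genuine limits (the $\partial_3$-exchange and $\widehat{D}_3$-terms now vanishing identically, and the stray-field term equal to $\frac{1}{h|\log h|}\int_{\mathbb{R}^2}\frac{|\xi'\cdot\mathcal{F}(\widetilde{m}_0'\mathds{1}_\omega)|^2}{|\xi'|^2}(1-g_h)\to\frac{1}{2\pi}\int_{\partial\omega}(\widetilde{m}_0\cdot\nu)^2\,\mathrm{d}\mathcal{H}^1$, the Kohn--Slastikov asymptotics for a fixed density) gives $\widetilde{E}_h(\widetilde{m}_0)\to\widetilde{E}_0(\widetilde{m}_0)$. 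If the sharp asymptotics is cleaner for smooth data, I would first treat $\widetilde{m}_0\in C^\infty$ (still independent of $x_3$) and then conclude by density and the continuity of $\widetilde{E}_0$ for strong $H^1$-convergence on the class of $x_3$-independent $\mathbb{S}^1$-valued maps.

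\medskip
\noindent\textbf{Main obstacle.} The genuinely delicate part is the stray-field analysis in both inequalities: extracting the logarithm and the constant $\frac{1}{2\pi}$ from the Fourier transform of a measure concentrated on the curve $\partial\omega$, while checking that the volume charges (an $\dot{H}^{-1/2}$-seminorm) and the cross terms become negligible once divided by $|\log h|$. This is essentially the content of~\cite{KS05}; the adaptation required here is the reduction to the $x_3$-average $\overline{m}_h$ (which simultaneously forces $\widetilde{m}_{0,3}\equiv0$) and the verification that, in the regime~\eqref{regime1.1}, the Dzyaloshinskii--Moriya interaction splits into a piece killed by the thinness and a piece converging to the shift term $2\alpha\int_{\Omega_1}\delta\cdot\nabla'\widetilde{m}'\wedge\widetilde{m}'$.
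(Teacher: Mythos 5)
Your proposal is correct and follows essentially the same strategy as the paper: absorption of the DMI terms via Young's inequality to get coercivity and $H^1$-bounds, identification of $\widetilde{m}_{0,3}\equiv 0$ and of the boundary term $\frac{1}{2\pi}\int_{\partial\omega}(\widetilde{m}_0\cdot\nu)^2\,\mathrm{d}\mathcal{H}^1$ through the Kohn--Slastikov analysis of the stray field for the $x_3$-average $\overline{m}_h$, termwise treatment of the remaining energies in the lower bound, and the constant recovery sequence $\widetilde{m}_h=\widetilde{m}_0$ for the upper bound. The only cosmetic difference is that you re-derive the reduction from $u_h$ to $\overline{u}_h$ by Poincar\'e--Wirtinger and the $L^2$-contraction of the stray-field map, whereas the paper quotes this (together with the logarithmic asymptotics of the boundary charge) directly as the Kohn--Slastikov estimates recalled in Theorems~\ref{limI} and~\ref{devstrayfield}; the substance of the argument is the same.
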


The proof of this theorem combines the works of Kohn-Slastikov~\cite{KS05} and Carbou~\cite{Car01}, to which we add here the contribution of the anisotropy energy, the external magnetic field, and the Dzyaloshinskii-Moriya interaction.
\vspace{.5cm}

The three-to-two-dimensions reduction takes sense in the following remark.

\begin{remarque}
\label{3D2D}
We can be more precise concerning the Gamma-limit $\widetilde{E}_0$ of the sequence $(\widetilde{E}_h)_{h>0}$ in Theorem~\ref{GC}. Since $\Omega_1$ has hight $1$, $\widetilde{H}_{\mathrm{ext},0}$ is independent of $x_3$ and $\widetilde{E}_0$ is a functional of \linebreak maps $\widetilde{m} \in H^1(\Omega_1,\mathbb{S}^2)$ such that $\widetilde{m}$ is independent of $x_3$ and $\widetilde{m}_3 \equiv 0$, we have
\begin{equation}
\label{exprE0-2D}
\begin{split}
\widetilde{E}_0(\widetilde{m})
& = \widetilde{E}_0(\widetilde{m}')
\\
& = \alpha \left[
\int_{\omega} \left\vert \nabla' \widetilde{m}' \right\vert^2 \mathrm{d} x'
+ 2 \int_{\omega} \delta \cdot \nabla' \widetilde{m}' \wedge \widetilde{m}' \ \mathrm{d} x'
\right]
\\
& \quad
+ \frac{1}{2\pi} \int_{\partial \omega} (\widetilde{m}' \cdot \nu)^2 \mathrm{d} \mathcal{H}^1
+ \beta \int_{\omega} \Phi(\widetilde{m}') \ \mathrm{d} x'
- 2 \gamma \int_{\omega} \widetilde{H}_{\mathrm{ext},0} \cdot \widetilde{m}' \ \mathrm{d} x',
\end{split}
\end{equation}
for every $\widetilde{m}' \in H^1(\omega,\mathbb{S}^1)$, where $\nu$ denotes here the outer unit normal vector on $\partial \omega$. As a consequence, Theorem \ref{GC} means a reduction from a three-dimensional model to a two-dimensional model.
\end{remarque}

\begin{corollaire}
\label{GC-cor}
Consider the regime~\eqref{regime1.1}$+$\eqref{regime1.2}. Let $\widetilde{E}_h$ be given in~\eqref{exprEh}. For every $h>0$, $\widetilde{E}_h$ admits a minimizer $\widetilde{m}_h \in H^1(\Omega_1,\mathbb{S}^2)$. Furthermore, for a subsequence, $(\widetilde{m}_h)_{h>0}$ converges weakly to a minimizer $\widetilde{m}_0 \in H^1(\Omega_1,\mathbb{S}^2)$ of $\widetilde{E}_0$.
\end{corollaire}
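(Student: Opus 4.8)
The plan is to follow the classical ``fundamental theorem of $\Gamma$-convergence'': prove existence of a minimizer $\widetilde{m}_h$ of $\widetilde{E}_h$ for each fixed $h>0$ by the direct method, then deduce convergence of the minimizers by combining an equicoercivity bound with Theorem~\ref{GC}.

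\textbf{Step 1: existence for fixed $h>0$.} Let $(\widetilde{m}_h^{(k)})_k$ be a minimizing sequence for $\widetilde{E}_h$ in $H^1(\Omega_1,\mathbb{S}^2)$. Since $\vert\widetilde{m}_h^{(k)}\vert=1$ a.e., the $L^2(\Omega_1)$-norms are bounded by $\vert\Omega_1\vert^{1/2}$; the two Dzyaloshinskii--Moriya terms are linear in $\nabla\widetilde{m}_h^{(k)}$ with pointwise coefficient bounded by $\vert\widehat{D}\vert$ (using $\vert\widetilde{m}_h^{(k)}\vert=1$), hence are absorbed by Young's inequality into a small fraction of the exchange term plus an $h$-dependent constant; the stray-field and anisotropy terms are non-negative ($\Phi\geqslant0$) and the Zeeman term is bounded below by $-C\Vert\widetilde{m}_h^{(k)}\Vert_{L^1}\geqslant-C$. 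Thus $\widetilde{E}_h(\widetilde{m}_h^{(k)})\geqslant c_h\Vert\nabla\widetilde{m}_h^{(k)}\Vert_{L^2}^2-C_h$ with $c_h>0$, so $(\widetilde{m}_h^{(k)})_k$ is bounded in $H^1(\Omega_1)$. Extract a subsequence converging weakly in $H^1$, strongly in $L^2$ (Rellich) and a.e.\ to some $\widetilde{m}_h$; the constraint passes to the a.e.\ limit, so $\widetilde{m}_h\in H^1(\Omega_1,\mathbb{S}^2)$. For weak lower semicontinuity: the exchange term is convex in the gradient, hence weakly lsc; the Dzyaloshinskii--Moriya terms pass to the limit by weak--strong convergence ($\partial_j\widetilde{m}_h^{(k)}\rightharpoonup\partial_j\widetilde{m}_h$ in $L^2$, $\widetilde{m}_h^{(k)}\to\widetilde{m}_h$ in $L^2$); the anisotropy term is handled by Fatou's lemma; the Zeeman term passes to the limit by dominated convergence (majorant $\vert\widetilde{H}_{\mathrm{ext},h}\vert\in L^1$); and the stray-field term $\int_{\mathbb{R}^3}\vert\nabla u_h\vert^2$ is a bounded non-negative quadratic form of $m_h\mathds{1}_{\Omega_h}$ (projection of $m_h\mathds{1}_{\Omega_h}$ onto gradient fields), hence continuous for strong $L^2(\Omega_h)$-convergence of the magnetization, which holds since $\widetilde{m}_h^{(k)}\to\widetilde{m}_h$ strongly in $L^2(\Omega_1)$ and $m_h$ is obtained from $\widetilde{m}_h$ by the fixed rescaling~\eqref{rel_mhtilde_mh}. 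Therefore $\widetilde{E}_h(\widetilde{m}_h)\leqslant\liminf_k\widetilde{E}_h(\widetilde{m}_h^{(k)})=\inf\widetilde{E}_h$, so $\widetilde{m}_h$ is a minimizer.

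\textbf{Step 2: convergence of minimizers.} Take the constant in-plane map $\widetilde{v}\equiv e_1\in H^1(\omega,\mathbb{S}^1)$, regarded as an element of $H^1(\Omega_1,\mathbb{S}^2)$ independent of $x_3$ with vanishing third component; then $\widetilde{E}_0(\widetilde{v})<+\infty$. By Theorem~\ref{GC}(ii) there is a recovery sequence $(\widetilde{v}_h)$ with $\widetilde{E}_h(\widetilde{v}_h)\to\widetilde{E}_0(\widetilde{v})$, so minimality of $\widetilde{m}_h$ gives $\widetilde{E}_h(\widetilde{m}_h)\leqslant\widetilde{E}_h(\widetilde{v}_h)\leqslant C$ uniformly in $h$. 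Theorem~\ref{GC}(i) then yields, along a subsequence, $\widetilde{m}_h\rightharpoonup\widetilde{m}_0$ weakly in $H^1(\Omega_1,\mathbb{S}^2)$ with $\widetilde{m}_0$ independent of $x_3$, $\widetilde{m}_{0,3}\equiv0$, and $\liminf_h\widetilde{E}_h(\widetilde{m}_h)\geqslant\widetilde{E}_0(\widetilde{m}_0)$. Finally, for an arbitrary $\widetilde{w}\in H^1(\Omega_1,\mathbb{S}^2)$ on which $\widetilde{E}_0$ is finite, Theorem~\ref{GC}(ii) provides a recovery sequence $(\widetilde{w}_h)$ with $\widetilde{E}_h(\widetilde{w}_h)\to\widetilde{E}_0(\widetilde{w})$, and minimality of $\widetilde{m}_h$ gives
\[
\widetilde{E}_0(\widetilde{m}_0)\leqslant\liminf_{h\to0}\widetilde{E}_h(\widetilde{m}_h)\leqslant\liminf_{h\to0}\widetilde{E}_h(\widetilde{w}_h)=\widetilde{E}_0(\widetilde{w}),
\]
the inequality being trivial when $\widetilde{E}_0(\widetilde{w})=+\infty$. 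Hence $\widetilde{m}_0$ minimizes $\widetilde{E}_0$, which completes the proof.

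\textbf{Expected main obstacle.} Once Theorem~\ref{GC} is granted, nothing here is deep; the only points requiring genuine care are, in Step~1, checking that the coercivity constant $c_h$ is strictly positive (i.e.\ that the negative Dzyaloshinskii--Moriya contribution cannot swamp the exchange energy, which is where the pointwise bound $\vert\widehat{D}\vert\,\vert\nabla\widetilde{m}_h\vert$ and Young's inequality are used) and the lower semicontinuity of the nonlocal stray-field term under the available strong $L^2$ convergence. Everything else is a routine application of the direct method and of the definition of $\Gamma$-convergence.
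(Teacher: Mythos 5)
Your proposal is correct and follows essentially the same route as the paper, which simply invokes Proposition~\ref{GC-coercivity} together with the direct method for existence at fixed $h$, and Theorem~\ref{GC} together with the standard properties of $\Gamma$-limits (\cite[Proposition~7.8]{DalMaso}) for the convergence of minimizers. The only difference is that you spell out the fixed-$h$ coercivity (via Young's inequality on the Dzyaloshinskii--Moriya terms) and the classical recovery-sequence comparison yourself instead of citing the coercivity proposition and Dal Maso, which is fine.
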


\newpage

\subsubsection{On the local minimizers of the Gamma-limit of the micromagnetic energy in the upper-half plane.}
\ \\
We study local minimizers of the Gamma-limit given in~\eqref{exprE0-2D}, without anisotropy and external magnetic field. To do so, we analyse critical points of this energy in the upper-half plane $\mathbb{R}_+^2$, and we are led to consider a rescaled version of $\widetilde{E}_0$ denoted by
\begin{equation}
\label{exprEepsilondelta}
E_{\varepsilon}^\delta(\varphi ;\omega)
:= \frac{1}{2} \int_{\omega \cap \mathbb{R}_+^2} \left( \left\vert \nabla \varphi \right\vert^2 - 2 \delta \cdot \nabla \varphi \right) \mathrm{d} x
+ \frac{1}{2\varepsilon} \int_{\omega \cap (\mathbb{R} \times \left\lbrace 0 \right\rbrace)} \sin^2 \varphi \ \mathrm{d} \mathcal{H}^1,
\end{equation}
where $\delta = (\delta_1,\delta_2)$, $\varepsilon=2\pi\alpha$, $\omega$ is an open subset of $\mathbb{R}^2$, and $\varphi \colon \omega \rightarrow \mathbb{R}$ is a lifting of $m \colon \omega \rightarrow \mathbb{S}^1$.

\begin{definition}
\label{def_pt_critique}
A function $\varphi \in H^1_\mathrm{loc}(\overline{\mathbb{R}_+^2})$ is a critical point of $E_\varepsilon^\delta$ if
\begin{equation*}
\left. \frac{\mathrm{d}}{\mathrm{d} t} \right\vert_{t=0} E_\varepsilon^\delta(\varphi + t \psi ;\mathrm{Supp}(\psi)) = 0,
\end{equation*}
for every $\psi \in C^1(\mathbb{R}^2)$ with compact support.
\end{definition}

\begin{definition}
\label{def_local_minimizer}
A function $\varphi \in H^1_\mathrm{loc}(\overline{\mathbb{R}_+^2})$ is a local minimizer of $E_{\varepsilon}^\delta$ in $\mathbb{R}_+^2$ in the sense of De Giorgi if
\begin{equation*}
E_{\varepsilon}^\delta(\varphi ;\mathrm{Supp}(\psi)) \leqslant E_{\varepsilon}^\delta(\varphi+\psi ;\mathrm{Supp}(\psi))
\end{equation*}
for every $\psi \in C^1(\mathbb{R}^2)$ with compact support.
\end{definition}

Consider the problem
\begin{equation}
\label{PNlambda}
\tag{PN$_\lambda$}
\left\lbrace \begin{array}{l}
f \in C^\infty(\mathbb{R}_+^2) \cap C^1(\overline{\mathbb{R}_+^2}),
\\
(x_1,x_2) \mapsto f(x_1,x_2)-\lambda x_2 \text{ is bounded in } \mathbb{R}_+^2,
\\
\Delta f=0 \text{ in } \mathbb{R}_+^2,
\\
\partial_2 f-\lambda+\sin f = 0 \text{ on } \mathbb{R} \times \left\lbrace 0 \right\rbrace,
\end{array} \right.
\end{equation}
where $\lambda \in \mathbb{R}$ is a constant parameter. This problem is a modified Peierls-Nabarro problem, that has been studied by Toland and Amick~(\cite{AmickToland},~\cite{Tol97}) in the case $\lambda = 0$. Following their work, we show the following statement.

\begin{theoreme}
\label{thmSolPNlambda}
Let $\lambda \in \mathbb{R}$. Solutions of~\eqref{PNlambda} in $\overline{\mathbb{R}_+^2}$ are given by:
\begin{itemize}
\item[--] for $n \in \mathbb{Z}$, the functions
\begin{equation}
(x_1,x_2) \mapsto n\pi +\lambda x_2,
\end{equation}
\item[--] for $n \in \mathbb{Z}$ and $\alpha \in (1,2)$, the $x_1$-periodic (with period $\frac{\pi}{\sigma}$) functions
\begin{equation}
\label{solPNperiodic}
(x_1,x_2) \mapsto 2n\pi \pm 2\left[ \arctan \left( \frac{\tan(\sigma x_1)}{\Gamma_\alpha(x_2)} \right)-\arctan \left( \Gamma_\alpha(x_2)\tan(\sigma x_1) \right) \right]+\lambda x_2,
\end{equation}
defined for $x_1 \in \mathbb{R} \setminus \left( \frac{\pi}{2\sigma} + \frac{\pi}{\sigma} \mathbb{Z} \right)$ and extended by $(x_1,x_2) \mapsto 2n\pi+\lambda x_2$ elsewhere, and every translation in the variable $x_1$ of this type of functions, where $\sigma$ and $\Gamma_\alpha$ are given by
\begin{equation*}
\sigma=\frac{1}{2}\sqrt{\alpha(2-\alpha)}, \ \ \Gamma_\alpha(x_2)=\frac{\gamma+\tanh(\sigma x_2)}{1+\gamma \tanh(\sigma x_2)} \ \text{ with } \ \gamma=\frac{\alpha}{2\sigma},
\end{equation*}
\item[--] for $n \in \mathbb{Z}$, the non-periodic functions
\begin{equation}
\label{solPNnonperiodic}
(x_1,x_2) \mapsto 2n\pi \pm 2\arctan \left( \frac{x_1}{1+x_2} \right)+\lambda x_2
\end{equation}
and every translation in the variable $x_1$ of this type of functions.
\end{itemize}
\end{theoreme}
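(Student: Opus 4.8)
The plan is to observe that \eqref{PNlambda} is, after subtracting a harmonic function, the classical Peierls--Nabarro problem, and then to follow the analysis of Amick and Toland~\cite{AmickToland,Tol97}. First I would reduce to the case $\lambda=0$. The function $(x_1,x_2)\mapsto\lambda x_2$ is harmonic, vanishes on $\mathbb{R}\times\{0\}$, and has constant vertical derivative equal to $\lambda$; hence, if $f$ solves \eqref{PNlambda}, then $g:=f-\lambda x_2$ is in $C^\infty(\mathbb{R}_+^2)\cap C^1(\overline{\mathbb{R}_+^2})$, is bounded in $\mathbb{R}_+^2$, is harmonic, and satisfies $\partial_2 g+\sin g=(\partial_2 f-\lambda)+\sin f=0$ on $\mathbb{R}\times\{0\}$ (using $g=f$ there), i.e. $g$ solves \eqref{PNlambda} with $\lambda=0$. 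Conversely $g\mapsto g+\lambda x_2$ sends solutions of the $\lambda=0$ problem to solutions of \eqref{PNlambda}, and this correspondence commutes with $x_1$-translations and with $x_1\mapsto-x_1$. So it is enough to classify the solutions for $\lambda=0$, and the three families in the statement are exactly those obtained by adding $\lambda x_2$.

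For $\lambda=0$ I would argue as follows. A bounded harmonic function on the half-plane is the Poisson integral of its bounded boundary trace $u:=g(\cdot,0)$, and is the unique bounded harmonic extension of $u$; together with the $C^1$ regularity up to the boundary this turns the Neumann condition into the nonlocal semilinear equation $(-\Delta)^{1/2}u=\sin u$ on $\mathbb{R}$ for the bounded profile $u$ (here $(-\Delta)^{1/2}u=-\partial_2 g|_{x_2=0}$, equivalently $Hu'$ with $H$ the Hilbert transform). The classification of bounded solutions of this equation is the core of \cite{AmickToland,Tol97}: passing to the holomorphic function $\Phi=g+i\widetilde g$, whose boundary datum is $\Phi'=\partial_1 g+i\sin g$ on $\mathbb{R}$, one uses an analytic-continuation/reflection argument across $\mathbb{R}\times\{0\}$, controlled solely by the boundedness of $g$ (no decay, and no a priori control of its derivatives), to show that $e^{i\Phi}$ extends to a globally controlled meromorphic function which can be identified explicitly. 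The outcome is that, up to $x_1$-translation and the reflection $x_1\mapsto-x_1$, the only bounded solutions $u$ are the constants $n\pi$, the one-parameter periodic family indexed by $\alpha\in(1,2)$, and the single heteroclinic layer $2n\pi\pm2\arctan(x_1)$.

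It then remains to write the bounded harmonic extensions and to check that the listed functions are genuine solutions. The extensions are elementary: $2\arctan(x_1)$ extends to $2\arctan\!\bigl(x_1/(1+x_2)\bigr)=2\,\Im\bigl(-\log(1-iz)\bigr)$ with $z=x_1+ix_2$, which gives \eqref{solPNnonperiodic} after adding $\lambda x_2$, and the periodic traces extend through the analogous logarithmic primitives, the factor $\Gamma_\alpha$ carrying the dependence on $x_2$, giving \eqref{solPNperiodic}. For the verification: harmonicity is immediate since each function is the imaginary part of a holomorphic function plus the harmonic term $\lambda x_2$; $f-\lambda x_2$ is bounded because $|\arctan|<\tfrac\pi2$; the boundary identity $\partial_2 f-\lambda+\sin f=0$ follows by differentiating the $\arctan$-expressions at $x_2=0$; and for \eqref{solPNperiodic} one checks the $C^1$-gluing at the excluded abscissae $x_1\in\frac{\pi}{2\sigma}+\frac{\pi}{\sigma}\mathbb{Z}$, where both arctangents tend to $\tfrac\pi2$ with matching one-sided derivatives.

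The main obstacle is the rigidity step: proving that no bounded solution of $(-\Delta)^{1/2}u=\sin u$ lies outside those three families. This is precisely the substance of Amick and Toland's work, and the delicate point is that the analytic continuation must be controlled with nothing more than the boundedness of $g$. A secondary technical matter is justifying the Poisson representation of bounded harmonic functions on the half-plane and the regularity needed to pass rigorously from \eqref{PNlambda} with $\lambda=0$ to the equation $(-\Delta)^{1/2}u=\sin u$; once these are in place, the rest is the elementary reduction and the direct computations above.
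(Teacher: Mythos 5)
Your reduction is correct, and it is genuinely different from (and slicker than) the paper's route. You observe that $g:=f-\lambda x_2$ is harmonic, bounded, $C^\infty(\mathbb{R}_+^2)\cap C^1(\overline{\mathbb{R}_+^2})$, and satisfies $\partial_2 g+\sin g=\partial_2 f-\lambda+\sin f=0$ on $\mathbb{R}\times\{0\}$ because $g=f$ there; so $f\mapsto f-\lambda x_2$ is a bijection between solutions of~\eqref{PNlambda} and of~(PN$_0$), and the stated list is exactly Toland's $\lambda=0$ classification shifted by $\lambda x_2$. The paper instead re-runs Toland's machinery for general $\lambda$: it introduces $w_{f,\lambda}=\partial_{11}f+\partial_1f(\partial_2f-\lambda)$ and its harmonic primitive $W_{f,\lambda}$, uses the odd symmetry $(f,\lambda)\mapsto(-f,-\lambda)$ of Remark~\ref{remPNlambda_odd} to build \emph{two} Benjamin--Ono solutions $u,v$ with $u-v=\partial_1 f$ and $u(\cdot,0)+v(\cdot,0)=1+\cos f(\cdot,0)$ (Theorem~\ref{thmPNlambdaBO}), then performs a case-by-case analysis over the Amick--Toland classification (Theorem~\ref{thmSolBO}), with integrability/periodicity lemmas to exclude configurations, and finally verifies the candidates directly. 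What each approach buys: yours delegates the entire rigidity step to the known $\lambda=0$ theorem, so the extension to $\lambda\neq 0$ becomes a two-line argument and even the verification of the explicit formulas reduces to the classical case (only sign/normalization conventions in Toland's statement need to be checked); the paper's version is self-contained modulo the BO classification and the computational identities it quotes from Toland, and it exhibits the mechanism (the pair of BO solutions attached to a PN$_\lambda$ solution) explicitly for general $\lambda$. Your middle paragraph sketching how Amick--Toland prove the $\lambda=0$ rigidity is only an outline and is not needed once you cite their result, which is legitimate here since the paper itself attributes the $\lambda=0$ case to them; with that citation in place, your argument is complete.
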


\newpage

In Proposition~\ref{propsystFeps} below, we show that if $\varphi_\varepsilon$ is a critical point of $E_{\varepsilon}^\delta$, then the rescaled function
\begin{equation*}
\phi_\varepsilon \colon (x_1,x_2) \in \overline{\mathbb{R}_+^2} \mapsto 2\varphi_\varepsilon(\varepsilon x_1,\varepsilon x_2)+\pi,
\end{equation*}
with the assumption that $(x_1,x_2) \mapsto \phi_\varepsilon(x_1,x_2)-\lambda_\varepsilon x_2$ is bounded in $\mathbb{R}_+^2$, satisfies the problem~(PN$_{\lambda_\varepsilon}$) in the case $\lambda_\varepsilon=2\varepsilon\delta_2$. Coming back to local minimizers of $E_{\varepsilon}^\delta$ in $\mathbb{R}_+^2$ in the sense of De Giorgi, we finally get the following statement.

\begin{theoreme}
\label{thm_phir_minimizer}
For $a \in \mathbb{R}$, the functions
\begin{equation}
\label{localminimizer}
\varphi_\varepsilon \colon (x_1,x_2) \in \overline{\mathbb{R}_+^2} \mapsto \frac{\pi}{2} - \arctan \left( \frac{x_1+\varepsilon a}{x_2+\varepsilon} \right) + \delta_2x_2
\end{equation}
are the only local minimizers of $E_{\varepsilon}^\delta$ in $\mathbb{R}_+^2$ in the sense of De Giorgi such that
\begin{equation}
\label{conditionsminimizer}
\lim\limits_{x_1 \rightarrow +\infty} \varphi_\varepsilon(x_1,0)=0, \ 
\lim\limits_{x_1 \rightarrow -\infty} \varphi_\varepsilon(x_1,0)=\pi \text{ and } \left[ (x_1,x_2) \mapsto \varphi_\varepsilon(x_1,x_2)-\delta_2 x_2 \right] \in L^\infty(\mathbb{R}_+^2).
\end{equation}
\end{theoreme}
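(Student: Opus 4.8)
The plan is to establish the two implications separately: (i) every De Giorgi local minimizer satisfying~\eqref{conditionsminimizer} is of the form~\eqref{localminimizer} (necessity), and (ii) each function~\eqref{localminimizer} is a De Giorgi local minimizer (sufficiency). For (i), I would first note that a De Giorgi local minimizer is in particular a critical point of $E_\varepsilon^\delta$ in the sense of Definition~\ref{def_pt_critique}, since for fixed $\psi \in C^1(\mathbb{R}^2)$ with compact support the map $t \mapsto E_\varepsilon^\delta(\varphi_\varepsilon + t\psi; \mathrm{Supp}(\psi))$ attains a minimum at $t=0$. The third condition in~\eqref{conditionsminimizer}, namely $(x_1,x_2)\mapsto \varphi_\varepsilon - \delta_2 x_2 \in L^\infty(\mathbb{R}_+^2)$, translates under the rescaling into the boundedness of $(x_1,x_2) \mapsto \phi_\varepsilon(x_1,x_2) - \lambda_\varepsilon x_2$ that is required to invoke Proposition~\ref{propsystFeps}; hence $\phi_\varepsilon(x) = 2\varphi_\varepsilon(\varepsilon x)+\pi$ solves (PN$_{\lambda_\varepsilon}$) with $\lambda_\varepsilon = 2\varepsilon\delta_2$. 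Theorem~\ref{thmSolPNlambda} then lists all possibilities. The constant solutions have a boundary trace that is constant in $x_1$, and the periodic solutions~\eqref{solPNperiodic} have one that is periodic and non-constant; in both cases the trace $\varphi_\varepsilon(\cdot,0) = \tfrac12(\phi_\varepsilon(\varepsilon\,\cdot\,,0)-\pi)$ cannot have the two distinct limits $0$ and $\pi$ at $\pm\infty$ demanded by~\eqref{conditionsminimizer}, so they are excluded. For the non-periodic solutions~\eqref{solPNnonperiodic} the boundary trace tends to $2n\pi \pm \pi$ as $x_1 \to +\infty$ and to $2n\pi \mp \pi$ as $x_1 \to -\infty$; imposing $\varphi_\varepsilon(\cdot,0)\to 0$ at $+\infty$ and $\to \pi$ at $-\infty$ forces the sign to be $-$ and $n = 1$. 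Undoing the rescaling turns this one-parameter orbit of $x_1$-translates into exactly~\eqref{localminimizer}, with $\varepsilon a$ the translation parameter, which proves (i).

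For (ii), since $E_\varepsilon^\delta$ is invariant under $x_1$-translations and~\eqref{localminimizer} is a single translation orbit, it suffices to treat $a=0$. One observes that $\varphi_\varepsilon \in C^\infty(\overline{\mathbb{R}_+^2})$ (the $\arctan$ is singular only at $x_2 = -\varepsilon < 0$), that $\Delta \varphi_\varepsilon = 0$ in $\mathbb{R}_+^2$, and that $\varphi_\varepsilon$ satisfies the boundary relation $\partial_2\varphi_\varepsilon - \delta_2 - \tfrac{1}{2\varepsilon}\sin(2\varphi_\varepsilon) = 0$ on $\mathbb{R}\times\{0\}$ (the rescaled (PN$_{\lambda_\varepsilon}$) boundary condition). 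Then for $\psi \in C^1(\mathbb{R}^2)$ with compact support $K$, integrating by parts, using $\Delta\varphi_\varepsilon = 0$, the identity $\int_{\mathbb{R}_+^2}\delta\cdot\nabla\psi = -\delta_2\int_{\mathbb{R}\times\{0\}}\psi$ (so that the $\delta_1$-component plays no role for compactly supported perturbations), and the boundary relation above, one gets
\begin{equation*}
E_\varepsilon^\delta(\varphi_\varepsilon + \psi; K) - E_\varepsilon^\delta(\varphi_\varepsilon; K) = \frac{1}{2}\int_{K \cap \mathbb{R}_+^2} |\nabla \psi|^2 \, \mathrm{d}x + \frac{1}{2\varepsilon}\int_{K \cap (\mathbb{R}\times\{0\})} \big( W(\varphi_\varepsilon + \psi) - W(\varphi_\varepsilon) - W'(\varphi_\varepsilon)\psi \big) \, \mathrm{d}\mathcal{H}^1,
\end{equation*}
with $W = \sin^2$. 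The bulk term is nonnegative, but $W$ is not convex, so this expression is not manifestly $\geqslant 0$: a direct pointwise argument fails, and this is where the content lies. I would argue variationally: introduce the renormalized energy-gap functional $J(\psi) := \lim_{R\to+\infty}\big( E_\varepsilon^\delta(\varphi_\varepsilon + \psi; B_R^+) - E_\varepsilon^\delta(\varphi_\varepsilon; B_R^+)\big)$, which is finite on a suitable class $\mathcal{A}$ of perturbations (for instance $\nabla\psi \in L^2(\mathbb{R}_+^2)$ with boundary trace in $\dot H^{1/2}(\mathbb{R})$), agrees with the left-hand side above whenever $\psi$ has compact support, and is coercive and lower semicontinuous on $\mathcal{A}$ (the bulk term controls $\nabla\psi$ in $L^2$, and $W,W'$ being bounded the boundary term is a controlled perturbation). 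Hence $J$ attains its infimum at some $\psi_*\in\mathcal{A}$, and $u_* := \varphi_\varepsilon + \psi_*$ is then a critical point of $E_\varepsilon^\delta$; by elliptic regularity (interior harmonicity, smooth nonlinear Neumann boundary condition) together with the decay of $\nabla\psi_*$ one checks that $u_*$ still satisfies~\eqref{conditionsminimizer}. By part (i), $u_*$ is an $x_1$-translate of $\varphi_\varepsilon$, whence $J(\psi_*)=0$ by translation invariance, so $\inf_{\mathcal{A}} J = 0 = J(0)$. Thus $\psi=0$ minimizes $J$, and restricting to compactly supported perturbations yields $E_\varepsilon^\delta(\varphi_\varepsilon;\mathrm{Supp}(\psi)) \leqslant E_\varepsilon^\delta(\varphi_\varepsilon + \psi;\mathrm{Supp}(\psi))$ for all such $\psi$, i.e. $\varphi_\varepsilon$ is a De Giorgi local minimizer; by $x_1$-translation invariance the whole family~\eqref{localminimizer} consists of local minimizers.

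The delicate point is part (ii), and more precisely two intertwined issues: making the renormalized functional $J$ genuinely well-defined and coercive on a class rich enough to contain all compactly supported perturbations, and, above all, verifying that its minimizer $\psi_*$ yields a critical point $u_* = \varphi_\varepsilon + \psi_*$ still obeying the asymptotics in~\eqref{conditionsminimizer} — the uniform bound on $u_* - \delta_2 x_2$ and the correct limits $0$ and $\pi$ of its boundary trace at $\pm\infty$ — so that the classification can be applied to identify $u_*$. Membership of $\psi_*$ in $\mathcal{A}$ only provides an $L^2$ gradient and a $\dot H^{1/2}$ (hence $\mathrm{BMO}$, not $L^\infty$) boundary trace, which in two dimensions does not by itself force pointwise decay; one must upgrade this using the Euler--Lagrange system (interior gradient estimates for harmonic functions, boundary estimates from the bounded smooth nonlinearity) and possibly a trapping argument between $x_1$-translates of $\varphi_\varepsilon$. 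A more classical alternative, which I would follow if it turns out cleaner in this setting, is to exploit that $\varphi_\varepsilon$ is a strictly $x_1$-monotone transition layer between the two wells $0$ and $\pi$ of $\sin^2$ and to run a sliding / calibration argument in the spirit of the analysis of one-dimensional heteroclinics for the half-Laplacian, the $\delta_1$-drift being a null Lagrangian for local perturbations and the $\delta_2$-drift being absorbed by the explicit affine term $\delta_2 x_2$.
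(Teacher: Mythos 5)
Your part (i) is sound and is essentially the paper's Proposition~\ref{prop_uminimizer}: a De Giorgi local minimizer is a critical point, Proposition~\ref{propsystFeps} gives regularity and the Euler--Lagrange system, the rescaling produces a solution of (PN$_{\lambda_\varepsilon}$), and the asymptotic conditions in~\eqref{conditionsminimizer} eliminate the constant and periodic families and force $(n,\pm)=(1,-)$ in~\eqref{solPNnonperiodic}. Your energy-gap identity with $W=\sin^2$ is also correct, and you rightly identify that the non-convexity of $W$ is the obstruction to a direct argument.

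The genuine gap is in part (ii), and it is not a technicality: the route through the renormalized functional $J$ is not carried out at precisely the points where the difficulty sits. First, existence of a minimizer $\psi_*$ of $J$ on $\mathcal{A}$ is unsupported: the boundary integrand $W(\varphi_\varepsilon+\psi)-W(\varphi_\varepsilon)-W'(\varphi_\varepsilon)\psi$ contains a term linear in $\psi$ with no sign, so lower boundedness and coercivity of $J$ are not clear (indeed, $\inf J\geqslant 0$ on compactly supported perturbations is essentially the statement to be proved), and the $x_1$-translation invariance you invoke later is exactly the mechanism that defeats compactness of minimizing sequences (drift/splitting), so the direct method does not apply as stated. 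Second, even granting $\psi_*$, the step you yourself flag as delicate --- showing that $u_*=\varphi_\varepsilon+\psi_*$ satisfies $u_*-\delta_2x_2\in L^\infty(\mathbb{R}_+^2)$ with boundary limits $0$ and $\pi$, so that part (i) can identify it as a translate --- is left open; membership in $\mathcal{A}$ gives only $\nabla\psi_*\in L^2$ and a $\dot H^{1/2}$ trace, which does not yield these asymptotics, and without them the identification, and hence the conclusion $\inf J=0$, collapses. The paper avoids all of this by proving minimality on every half-disk $B_r^+$ directly: given a minimizer of $E_\varepsilon^\delta(\cdot;B_r)$ with boundary data $\varphi_\varepsilon$ on $\partial B_r^+\cap\mathbb{R}_+^2$, a truncation argument (Proposition~\ref{prop_phirzeropi}) produces a minimizer confined to the strip $0\leqslant\cdot-\delta_2x_2\leqslant\pi$; since $\varphi_\varepsilon$ is a layer function in the sense of Cabr\'e and Sol\`a-Morales (Proposition~\ref{prop_phirlayer}), the uniqueness result \cite[Lemma~3.1]{CSM05} for the constrained problem~\eqref{pbCSM}, proved by the sliding method, forces the truncated minimizer to coincide with $\varphi_\varepsilon$, and letting $r$ be arbitrary gives De Giorgi local minimality. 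Your closing remark about a sliding argument between translates points in exactly this direction, but it is only mentioned as an alternative and not developed, so as written the sufficiency half of the theorem is not established.
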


\paragraph{Outline of the paper.}
The rest of the paper is organized as follows. In Section~2, we justify the expression of the energy $\widetilde{E}_h$ in~\eqref{exprEh} and prove the coercivity of this energy functional. We then prove Theorem~\ref{GC} and Corollary~\ref{GC-cor}. In Section~3, we analyse critical points of $E_\varepsilon^\delta$ given by~\eqref{exprEepsilondelta} and link them with solutions of a modified Peierls-Nabarro problem with parameter $\lambda$. We then solve this problem~(PN$_\lambda$) and prove Theorem~\ref{thmSolPNlambda}. Finally, we study local minimizers of $E_\varepsilon^\delta$ in $\mathbb{R}_+^2$ in the sense of De Giorgi and prove Theorem~\ref{thm_phir_minimizer}.
 
\paragraph{Acknowledgment.}
The author would like to thank his PhD advisor Radu Ignat for his patience and suggestions.

\section{Gamma-convergence of the micromagnetic energy with Dzyaloshinskii-Moriya interaction in a thin-film regime for boundary vortices}
\label{section1}

\subsection{The rescaled energy $\widetilde{E}_h$}
\label{subsection1.1}

Let $\Omega_h = \omega \times (0,h)$ with $\omega \subset \mathbb{R}^2$ a smooth bounded open set of typical length $1$. For a magnetization~$m_h \colon \Omega_h \rightarrow \mathbb{S}^2$ that satisfies
\begin{equation*}
\Delta u_h = \mathrm{div}(m_h\mathds{1}_{\Omega_h}) \ \text{ in the distributional sense in } \mathbb{R}^3,
\end{equation*}
we define $E_h(m_h)$ as in~\eqref{Ehmhchp1}, using~\eqref{energymh}, as
\begin{equation}
\label{exprEhmh}
\begin{split}
E_h(m_h)
& = \frac{d^2}{h \left\vert \log h \right\vert} \int_{\Omega_h} \left\vert \nabla m_h \right\vert^2 \mathrm{d} x
+ \frac{1}{h^2 \left\vert \log h \right\vert} \int_{\Omega_h} \widehat{D} : \nabla m_h \wedge m_h \ \mathrm{d} x
\\
& \quad
+ \frac{1}{h^2 \left\vert \log h \right\vert} \int_{\mathbb{R}^3} \left\vert \nabla u_h \right\vert^2 \mathrm{d} x
\\
& \quad
+ \frac{Q}{h^2 \left\vert \log h \right\vert} \int_{\Omega_h} \Phi(m_h) \ \mathrm{d} x
- \frac{2}{h^2 \left\vert \log h \right\vert} \int_{\Omega_h} H_{\mathrm{ext},h} \cdot m_h \ \mathrm{d} x,
\end{split}
\end{equation}
where $\Phi \colon \mathbb{S}^2 \rightarrow \mathbb{R}_+$, $H_{\mathrm{ext},h} \colon \mathbb{R}^3 \rightarrow \mathbb{R}^3$ and $\widehat{D} \in \mathbb{R}^{3 \times 3}$.

The energy~\eqref{exprEhmh} has been studied by Kohn and Slastikov in~\cite{KS05} by considering only the exchange and magnetostatic terms. If it seems clear that the exchange energy is one of the leading-order terms in the asymptotic regime~\eqref{regime1.1}+\eqref{regime1.2}, the work of Kohn and Slastikov shows that the limit model also keeps a contribution from the magnetostatic energy.

In order to make the energy $E_h$ easier to study, we remove the dependence on $h$ in the bounds of the involved integrals by a new rescaling. More precisely, for any $h>0$, we set $\widetilde{m}_h \colon \Omega_1 \rightarrow \mathbb{S}^2$ and $\widetilde{H}_{\mathrm{ext},h} \colon \mathbb{R}^3 \rightarrow \mathbb{R}^3$ to be such that, for every $(x',x_3) \in \Omega_1 = \omega \times (0,1)$,  
\begin{equation*}
\widetilde{m}_h(x',x_3)=m_h(x',hx_3)
\ \ \text{ and } \ \ 
\widetilde{H}_{\mathrm{ext},h}(x',x_3)=H_{\mathrm{ext},h}(x',hx_3).
\end{equation*}
For every $(x',x_3) \in \Omega_h$, we then have
\begin{align*}
\int_{\Omega_h} \vert \nabla m_h \vert^2 \mathrm{d} x
& = \int_{\omega} \int_0^h \left\vert \nabla \left( m_h(x',x_3) \right) \right\vert^2 \mathrm{d} x' \mathrm{d} x_3
\\
& = h \int_{\omega} \int_0^1 \left\vert \nabla \left( m_h(x',h\widetilde{x}_3) \right) \right\vert^2 \mathrm{d} x' \mathrm{d} \widetilde{x}_3
\\ & = h \int_{\omega} \int_0^1 \left( \left\vert \nabla'\widetilde{m}_h(x',\widetilde{x}_3) \right\vert^2 + \frac{1}{h^2} \left\vert \partial_3 \widetilde{m}_h(x',\widetilde{x}_3) \right\vert^2 \right) \mathrm{d} x'\mathrm{d} \widetilde{x}_3
\\ & = h \int_{\Omega_1} \left( \vert \nabla'\widetilde{m}_h \vert^2 + \frac{1}{h^2} \left\vert \partial_3\widetilde{m}_h \right\vert^2 \right) \mathrm{d} x,
\end{align*}
by the change of variable $x_3=h\widetilde{x}_3$ and using that $\partial_3m_h(x',h\widetilde{x}_3)=\frac{1}{h}\partial_3\widetilde{m}_h(x',\widetilde{x}_3)$. By the same change of variable,
\begin{align*}
\int_{\Omega_h} \widehat{D} : \nabla m_h \wedge m_h \ \mathrm{d} x
& = h \int_{\Omega_1} \widehat{D}' : \nabla' \widetilde{m}_h \wedge \widetilde{m}_h \ \mathrm{d} x
+ \int_{\Omega_1} \widehat{D}_3 \cdot \partial_3 \widetilde{m}_h \wedge \widetilde{m}_h \ \mathrm{d} x,
\end{align*}
where $\widehat{D}=(\widehat{D}',\widehat{D}_3)$,
\begin{equation*}
\int_{\Omega_h} \Phi(m_h) \ \mathrm{d} x
= h \int_{\Omega_1} \Phi(\widetilde{m}_h) \ \mathrm{d} x,
\ \ \text{ and } \
\int_{\Omega_h} H_{\mathrm{ext},h} \cdot m_h \ \mathrm{d} x
= h \int_{\Omega_1} \widetilde{H}_{\mathrm{ext},h} \cdot \widetilde{m}_h \ \mathrm{d} x.
\end{equation*}
Setting $\widetilde{E}_h(\widetilde{m}_h)=E_h(m_h)$, we eventually get
\begin{equation*}
\begin{split}
\widetilde{E}_h(\widetilde{m}_h)
& = \frac{d^2}{h \left\vert \log h \right\vert} \int_{\Omega_1} \left( \vert \nabla'\widetilde{m}_h \vert^2 + \frac{1}{h^2} \left\vert \partial_3\widetilde{m}_h \right\vert^2 \right) \mathrm{d} x
\\ & \quad
+ \frac{1}{h \left\vert \log h \right\vert} \int_{\Omega_1} \widehat{D}' : \nabla' \widetilde{m}_h \wedge \widetilde{m}_h \ \mathrm{d} x
\\ & \quad
+ \frac{1}{h^2 \left\vert \log h \right\vert} \int_{\Omega_1} \widehat{D}_3 \cdot \partial_3 \widetilde{m}_h \wedge \widetilde{m}_h \ \mathrm{d} x
\\ & \quad
+ \frac{1}{h^2 \left\vert \log h \right\vert} \int_{\mathbb{R}^3} \left\vert \nabla u_h \right\vert^2 \mathrm{d} x
\\ & \quad + \frac{Q}{h \left\vert \log h \right\vert} \int_{\Omega_1} \Phi(\widetilde{m}_h) \ \mathrm{d} x
- \frac{2}{h \left\vert \log h \right\vert} \int_{\Omega_1} \widetilde{H}_{\mathrm{ext},h} \cdot \widetilde{m}_h \ \mathrm{d} x
\end{split}
\end{equation*}
which is~\eqref{exprEh}. The rest of this section is devoted to prove Theorem~\ref{GC}.

\subsection{Coercivity}
\label{subsection1.2}

In this subsection, we provide a statement of coercivity concerning the energy $\widetilde{E}_h$. We begin with giving basic estimates for the Dzyaloshinskii-Moriya interaction energy, that will be useful in the sequel.

\begin{lemme}
\label{lemme-bound-dmi-1-2}
We have
\begin{equation}
\label{bound-D1mD13}
\begin{split}
\left\vert
\int_{\Omega_1} \widehat{D}_1 \cdot \partial_1 \widetilde{m}_h \wedge \widetilde{m}_h \ \mathrm{d} x
\right\vert
& \leqslant
\vert \widehat{D}_{13} \vert \int_{\Omega_1} \left\vert \partial_1 \widetilde{m}'_h \wedge \widetilde{m}'_h \right\vert \mathrm{d} x
\\
& \quad +
\left( \vert \widehat{D}_{11} \vert + \vert \widehat{D}_{12} \vert \right) \int_{\Omega_1} \left( 1+\left\vert \partial_1 \widetilde{m}_h \right\vert^2 \right) \mathrm{d} x,
\end{split}
\end{equation}
and
\begin{equation}
\label{bound-D2mD23}
\begin{split}
\left\vert
\int_{\Omega_1} \widehat{D}_2 \cdot \partial_2 \widetilde{m}_h \wedge \widetilde{m}_h \ \mathrm{d} x
\right\vert
& \leqslant
\vert \widehat{D}_{23} \vert \int_{\Omega_1} \left\vert \partial_2 \widetilde{m}'_h \wedge \widetilde{m}'_h \right\vert \mathrm{d} x
\\
& \quad +
\left( \vert \widehat{D}_{21} \vert + \vert \widehat{D}_{22} \vert \right) \int_{\Omega_1} \left( 1+\left\vert \partial_2 \widetilde{m}_h \right\vert^2 \right) \mathrm{d} x.
\end{split}
\end{equation}
\end{lemme}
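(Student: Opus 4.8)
The plan is to expand the integrand $\widehat{D}_1 \cdot \partial_1 \widetilde{m}_h \wedge \widetilde{m}_h$ componentwise, isolate the part that only involves the in-plane component $\widetilde{m}_h'$, and control the two remaining terms using the pointwise constraint $|\widetilde{m}_h|=1$ together with Young's inequality.

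Concretely, writing $\widetilde{m}_h=(\widetilde{m}_h',\widetilde{m}_{h,3})$ and using the definition of the cross product in $\mathbb{R}^3$, the third component of $\partial_1\widetilde{m}_h\wedge\widetilde{m}_h$ is exactly the planar determinant $\partial_1\widetilde{m}_h'\wedge\widetilde{m}_h'$, while its first two components are differences of products of the form $(\partial_1\widetilde{m}_{h,i})\,\widetilde{m}_{h,j}$ in which the index $3$ always appears. Taking the inner product with $\widehat{D}_1=(\widehat{D}_{11},\widehat{D}_{12},\widehat{D}_{13})$ therefore gives
\[
\widehat{D}_1\cdot\partial_1\widetilde{m}_h\wedge\widetilde{m}_h
= \widehat{D}_{13}\,\partial_1\widetilde{m}_h'\wedge\widetilde{m}_h'
+ \widehat{D}_{11}\big(\partial_1\widetilde{m}_{h,2}\,\widetilde{m}_{h,3}-\partial_1\widetilde{m}_{h,3}\,\widetilde{m}_{h,2}\big)
+ \widehat{D}_{12}\big(\partial_1\widetilde{m}_{h,3}\,\widetilde{m}_{h,1}-\partial_1\widetilde{m}_{h,1}\,\widetilde{m}_{h,3}\big).
\]
Integrating over $\Omega_1$ and applying the triangle inequality, the $\widehat{D}_{13}$-term yields directly $|\widehat{D}_{13}|\int_{\Omega_1}|\partial_1\widetilde{m}_h'\wedge\widetilde{m}_h'|\,\mathrm{d}x$, which is the first term on the right-hand side of~\eqref{bound-D1mD13}.

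For the $\widehat{D}_{11}$- and $\widehat{D}_{12}$-terms, I would use $|\widetilde{m}_{h,j}|\leqslant|\widetilde{m}_h|=1$ a.e.\ to bound each product by $|\partial_1\widetilde{m}_{h,i}|$, and then Young's inequality $|\partial_1\widetilde{m}_{h,i}|\leqslant\frac12\big(1+|\partial_1\widetilde{m}_{h,i}|^2\big)\leqslant\frac12\big(1+|\partial_1\widetilde{m}_h|^2\big)$; summing the two resulting contributions gives $(|\widehat{D}_{11}|+|\widehat{D}_{12}|)\int_{\Omega_1}\big(1+|\partial_1\widetilde{m}_h|^2\big)\,\mathrm{d}x$. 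Adding the three estimates produces~\eqref{bound-D1mD13}, and~\eqref{bound-D2mD23} follows by the identical argument with $\partial_1$ replaced by $\partial_2$ and $\widehat{D}_1$ by $\widehat{D}_2$. There is no genuine difficulty here: the only point requiring care is correctly identifying which component of the three-dimensional cross product reduces to the planar wedge $\partial_j\widetilde{m}_h'\wedge\widetilde{m}_h'$ and keeping the indices straight in the expansion; the rest is bookkeeping plus the elementary inequality above.
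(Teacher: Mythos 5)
Your proposal is correct and follows essentially the same route as the paper: expand $\widehat{D}_j$ along the standard basis, identify the $e_3$-component of $\partial_j\widetilde{m}_h\wedge\widetilde{m}_h$ with the planar wedge $\partial_j\widetilde{m}'_h\wedge\widetilde{m}'_h$, and control the $\widehat{D}_{j1}$- and $\widehat{D}_{j2}$-terms via $|\widetilde{m}_h|=1$ and Young's inequality. No gaps; the bookkeeping matches the paper's estimates exactly.
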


\begin{proof}
We denote by $(e_1,e_2,e_3)$ the standard orthonormal basis in $\mathbb{R}^3$.

\noindent
By expanding $\widehat{D}_1$ as $\widehat{D}_1=\widehat{D}_{11}e_1+\widehat{D}_{12}e_2+\widehat{D}_{13}e_3$, we have
\begin{equation}
\label{devD1}
\begin{split}
\int_{\Omega_1} \widehat{D}_1 \cdot \partial_1 \widetilde{m}_h \wedge \widetilde{m}_h \ \mathrm{d} x
& =
\int_{\Omega_1} \widehat{D}_{11}e_1 \cdot \partial_1 \widetilde{m}_h \wedge \widetilde{m}_h \ \mathrm{d} x
\\
& \quad
+ \int_{\Omega_1} \widehat{D}_{12}e_2 \cdot \partial_1 \widetilde{m}_h \wedge \widetilde{m}_h \ \mathrm{d} x
\\
& \quad
+ \int_{\Omega_1} \widehat{D}_{13}e_3 \cdot \partial_1 \widetilde{m}_h \wedge \widetilde{m}_h \ \mathrm{d} x,
\end{split}
\end{equation}
and it is clear that
\begin{equation}
\label{devD13}
\int_{\Omega_1} \widehat{D}_{13} e_3 \cdot \partial_1 \widetilde{m}_h \wedge \widetilde{m}_h \ \mathrm{d} x
= \widehat{D}_{13} \int_{\Omega_1} \partial_1 \widetilde{m}'_h \wedge \widetilde{m}'_h \ \mathrm{d} x.
\end{equation}
Hence, it suffices to find an upper bound for the absolute value of the integrals involving~$\widehat{D}_{11}$ and~$\widehat{D}_{12}$. For the first integral, note that
\begin{align*}
\int_{\Omega_1} \widehat{D}_{11}e_1 \cdot \partial_1 \widetilde{m}_h \wedge \widetilde{m}_h \ \mathrm{d} x
& = \widehat{D}_{11} \left( \int_{\Omega_1} \widetilde{m}_{h,3} \partial_1 \widetilde{m}_{h,2} \ \mathrm{d} x - \int_{\Omega_1} \widetilde{m}_{h,2} \partial_1 \widetilde{m}_{h,3} \ \mathrm{d} x \right).
\end{align*}
By Young's inequality,
\begin{align*}
\widetilde{m}_{h,3} \partial_1 \widetilde{m}_{h,2}
& \leqslant \frac{1}{2} \left( \left\vert \widetilde{m}_{h,3} \right\vert^2 + \left\vert \partial_1 \widetilde{m}_{h,2} \right\vert^2 \right)
\leqslant \frac{1}{2} \left( 1 + \left\vert \partial_1 \widetilde{m}_{h} \right\vert^2 \right),
\end{align*}
and similarly,
\begin{align*}
\widetilde{m}_{h,2} \partial_1 \widetilde{m}_{h,3}
& \leqslant \frac{1}{2} \left( \left\vert \widetilde{m}_{h,2} \right\vert^2 + \left\vert \partial_1 \widetilde{m}_{h,3} \right\vert^2 \right)
\leqslant \frac{1}{2} \left( 1 + \left\vert \partial_1 \widetilde{m}_{h} \right\vert^2 \right).
\end{align*}
We deduce that
\begin{align*}
\left\vert \int_{\Omega_1} \widehat{D}_{11} e_1 \cdot \partial_1 \widetilde{m}_h \wedge \widetilde{m}_h \ \mathrm{d} x \right\vert
& \leqslant \vert \widehat{D}_{11} \vert \int_{\Omega_1} \left( 1 + \left\vert \partial_1 \widetilde{m}_{h} \right\vert^2 \right) \mathrm{d} x.
\end{align*}
By the same arguments, we have
\begin{align*}
\left\vert \int_{\Omega_1} \widehat{D}_{12}e_2 \cdot \partial_1 \widetilde{m}_h \wedge \widetilde{m}_h \ \mathrm{d} x \right\vert
& \leqslant \vert \widehat{D}_{12} \vert \int_{\Omega_1} \left( 1 + \left\vert \partial_1 \widetilde{m}_{h} \right\vert^2 \right) \mathrm{d} x.
\end{align*}
Combining the above inequalities with~\eqref{devD1} and~\eqref{devD13}, we deduce~\eqref{bound-D1mD13}. The proof of~\eqref{bound-D2mD23} is similar.
\end{proof}

\begin{lemme}
\label{lemme-upper-bound-dmi-3}
We have
\begin{equation}
\label{upper-bound-D3}
\begin{split}
& \left\vert \frac{1}{h} \int_{\Omega_1} \widehat{D}_3 \cdot \partial_3 \widetilde{m}_h \wedge \widetilde{m}_h \ \mathrm{d} x \right\vert
\\
& \qquad \qquad \qquad
\leqslant \left( \vert \widehat{D}_{31} \vert + \vert \widehat{D}_{32} \vert + \frac{\vert \widehat{D}_{33} \vert}{2} \right) \int_{\Omega_1} \left( 1+\frac{1}{h^2}  \left\vert \partial_3 \widetilde{m}_h \right\vert^2 \right) \mathrm{d} x.
\end{split}
\end{equation}
\end{lemme}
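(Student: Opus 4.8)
The plan is to mimic the proof of Lemma~\ref{lemme-bound-dmi-1-2}: expand $\widehat{D}_3 = \widehat{D}_{31}e_1 + \widehat{D}_{32}e_2 + \widehat{D}_{33}e_3$ in the standard basis and estimate the three resulting integrals separately by Young's inequality. The only genuinely new feature is the prefactor $\frac{1}{h}$, which must be absorbed so as to produce the rescaled quantity $\frac{1}{h^2}\vert\partial_3\widetilde{m}_h\vert^2$, i.e.\ exactly the combination appearing in the exchange term of $\widetilde{E}_h$ in~\eqref{exprEh}.

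First I would write out, for $j \in \{1,2\}$, the scalars $e_j \cdot (\partial_3\widetilde{m}_h \wedge \widetilde{m}_h)$ from the definition of the cross product: each is a difference of two terms of the form $\partial_3\widetilde{m}_{h,k}\,\widetilde{m}_{h,l}$. For each such term I apply Young's inequality in the form $\frac{1}{h}\vert\partial_3\widetilde{m}_{h,k}\vert\,\vert\widetilde{m}_{h,l}\vert \leqslant \frac{1}{2}\bigl(\frac{1}{h^2}\vert\partial_3\widetilde{m}_{h,k}\vert^2 + \vert\widetilde{m}_{h,l}\vert^2\bigr) \leqslant \frac{1}{2}\bigl(\frac{1}{h^2}\vert\partial_3\widetilde{m}_h\vert^2 + 1\bigr)$, using $\vert\widetilde{m}_h\vert = 1$. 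Summing the two terms gives $\frac{1}{h}\vert e_j \cdot (\partial_3\widetilde{m}_h \wedge \widetilde{m}_h)\vert \leqslant 1 + \frac{1}{h^2}\vert\partial_3\widetilde{m}_h\vert^2$ for $j = 1,2$; multiplying by $\vert\widehat{D}_{3j}\vert$ and integrating over $\Omega_1$ produces the first two terms on the right-hand side of~\eqref{upper-bound-D3}.

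For the $e_3$-component, $e_3 \cdot (\partial_3\widetilde{m}_h \wedge \widetilde{m}_h) = \partial_3\widetilde{m}'_h \wedge \widetilde{m}'_h$ is the two-dimensional determinant, so here I would first use the bound $\vert\partial_3\widetilde{m}'_h \wedge \widetilde{m}'_h\vert \leqslant \vert\partial_3\widetilde{m}'_h\vert\,\vert\widetilde{m}'_h\vert \leqslant \vert\partial_3\widetilde{m}_h\vert$ (using $\vert\widetilde{m}'_h\vert \leqslant 1$), and only then Young's inequality $\frac{1}{h}\vert\partial_3\widetilde{m}_h\vert \leqslant \frac{1}{2}\bigl(\frac{1}{h^2}\vert\partial_3\widetilde{m}_h\vert^2 + 1\bigr)$. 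Multiplying by $\vert\widehat{D}_{33}\vert$ and integrating gives the last term, with the factor $\frac{1}{2}$. Combining the three contributions via $\vert\widehat{D}_3 \cdot (\partial_3\widetilde{m}_h \wedge \widetilde{m}_h)\vert \leqslant \sum_{k=1}^3 \vert\widehat{D}_{3k}\vert\,\vert e_k \cdot (\partial_3\widetilde{m}_h \wedge \widetilde{m}_h)\vert$ then yields~\eqref{upper-bound-D3}.

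There is no serious obstacle here; the one point needing care is that the $e_3$-component cannot be treated exactly like the $e_1$- and $e_2$-components. Estimating $\partial_3\widetilde{m}'_h \wedge \widetilde{m}'_h$ by the naive ``square on both factors'' would cost a constant $1$ rather than $\frac{1}{2}$ in front of $\vert\widehat{D}_{33}\vert$; routing through the determinant bound first is what recovers the sharper factor $\frac{1}{2}$ stated in the lemma.
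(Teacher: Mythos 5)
Your proposal is correct and follows essentially the same route as the paper: expand $\widehat{D}_3$ componentwise, apply Young's inequality with the weights $\frac{1}{h}$ and $\frac{1}{h^2}$ termwise for the $e_1$- and $e_2$-components, and treat the $e_3$-component through the two-dimensional determinant as a whole to recover the factor $\frac{1}{2}$. The only cosmetic difference is that the paper writes $\partial_3\widetilde{m}'_h \wedge \widetilde{m}'_h = \partial_3\widetilde{m}'_h \cdot (\widetilde{m}'_h)^\perp$ and applies Young directly, whereas you first bound the determinant by $\vert\partial_3\widetilde{m}_h\vert$ via Cauchy--Schwarz and then apply Young; the two estimates are identical in substance.
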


\begin{proof}
We denote by $(e_1,e_2,e_3)$ the standard orthonormal basis in $\mathbb{R}^3$.

\noindent
By expanding $\widehat{D}_3$ as $\widehat{D}_3=\widehat{D}_{31}e_1+\widehat{D}_{32}e_2+\widehat{D}_{33}e_3$, we have
\begin{equation*}
\begin{split}
\frac{1}{h}\int_{\Omega_1} \widehat{D}_3 \cdot \partial_3 \widetilde{m}_h \wedge \widetilde{m}_h \ \mathrm{d} x
& = \frac{1}{h}\int_{\Omega_1} \widehat{D}_{31}e_1 \cdot \partial_3 \widetilde{m}_h \wedge \widetilde{m}_h \ \mathrm{d} x
\\
& \quad + \frac{1}{h}\int_{\Omega_1} \widehat{D}_{32}e_2 \cdot \partial_3 \widetilde{m}_h \wedge \widetilde{m}_h \ \mathrm{d} x
\\
& \quad + \frac{1}{h}\int_{\Omega_1} \widehat{D}_{33}e_3 \cdot \partial_3 \widetilde{m}_h \wedge \widetilde{m}_h \ \mathrm{d} x .
\end{split}
\end{equation*}
For the first integral in the right-hand side above, note that
\begin{align*}
\frac{1}{h}\int_{\Omega_1} \widehat{D}_{31}e_1 \cdot \partial_3 \widetilde{m}_h \wedge \widetilde{m}_h \ \mathrm{d} x
& = \frac{\widehat{D}_{31}}{h} \left( \int_{\Omega_1} \widetilde{m}_{h,3} \partial_3 \widetilde{m}_{h,2} \ \mathrm{d} x - \int_{\Omega_1} \widetilde{m}_{h,2} \partial_3 \widetilde{m}_{h,3} \ \mathrm{d} x \right).
\end{align*}
By Young's inequality,
\begin{align*}
\frac{1}{h} \widetilde{m}_{h,3} \partial_3 \widetilde{m}_{h,2}
& \leqslant \frac{1}{2} \left( \left\vert \widetilde{m}_{h,3} \right\vert^2 + \frac{1}{h^2} \left\vert \partial_3 \widetilde{m}_{h,2} \right\vert^2 \right)
\leqslant \frac{1}{2} \left( 1 + \frac{1}{h^2} \left\vert \partial_3\widetilde{m}_{h} \right\vert^2 \right),
\end{align*}
and similarly,
\begin{align*}
\frac{1}{h} \widetilde{m}_{h,2} \partial_3 \widetilde{m}_{h,3}
& \leqslant \frac{1}{2} \left( \left\vert \widetilde{m}_{h,2} \right\vert^2 + \frac{1}{h^2} \left\vert \partial_3 \widetilde{m}_{h,3} \right\vert^2 \right)
\leqslant \frac{1}{2} \left( 1 + \frac{1}{h^2} \left\vert \partial_3\widetilde{m}_{h} \right\vert^2 \right).
\end{align*}
We deduce that
\begin{align*}
\left\vert \frac{1}{h} \int_{\Omega_1} \widehat{D}_{31} e_1 \cdot \partial_3 \widetilde{m}_h \wedge \widetilde{m}_h \ \mathrm{d} x \right\vert
& \leqslant \vert \widehat{D}_{31} \vert \int_{\Omega_1} \left( 1 + \frac{1}{h^2} \left\vert \partial_3\widetilde{m}_{h} \right\vert^2 \right) \mathrm{d} x.
\end{align*}
By the same arguments, we have
\begin{align*}
\left\vert \frac{1}{h} \int_{\Omega_1} \widehat{D}_{32} e_2 \cdot \partial_3 \widetilde{m}_h \wedge \widetilde{m}_h \ \mathrm{d} x \right\vert
& \leqslant \vert \widehat{D}_{32} \vert \int_{\Omega_1} \left( 1 + \frac{1}{h^2} \left\vert \partial_3\widetilde{m}_{h} \right\vert^2 \right) \mathrm{d} x.
\end{align*}
Furthermore, we have
\begin{equation*}
\frac{1}{h} \int_{\Omega_1} \widehat{D}_{33} e_3 \cdot \partial_3 \widetilde{m}_h \wedge \widetilde{m}_h \ \mathrm{d} x
= \widehat{D}_{33} \int_{\Omega_1} \frac{1}{h} \partial_3 \widetilde{m}'_h \wedge \widetilde{m}'_h \ \mathrm{d} x,
\end{equation*}
and by Young's inequality,
\begin{align*}
\frac{1}{h} \partial_3 \widetilde{m}'_h \wedge \widetilde{m}'_h = \frac{1}{h} \partial_3 \widetilde{m}'_h \cdot \left( \widetilde{m}'_h \right)^\perp
& \leqslant \frac{1}{2} \left( \frac{1}{h^2} \left\vert \partial_3 \widetilde{m}'_h \right\vert^2+\left\vert \widetilde{m}'_h \right\vert^2 \right)
\\ & \leqslant \frac{1}{2} \left( \frac{1}{h^2} \left\vert \partial_3 \widetilde{m}_h \right\vert^2+1 \right).
\end{align*}
Consequently,
\begin{equation*}
\left\vert \frac{1}{h} \int_{\Omega_1} \widehat{D}_{33} e_3 \cdot \partial_3 \widetilde{m}_h \wedge \widetilde{m}_h \ \mathrm{d} x \right\vert \leqslant \frac{\vert \widehat{D}_{33} \vert}{2} \int_{\Omega_1} \left( 1+\frac{1}{h^2} \left\vert \partial_3 \widetilde{m}_h \right\vert^2 \right) \mathrm{d} x,
\end{equation*}
and the upper bound~\eqref{upper-bound-D3} follows.
\end{proof}

\begin{notation}
In the following, we denote from~\eqref{exprEh}:
\begin{equation}
\label{decomposition-coercivity}
\widetilde{E}_h(\widetilde{m}_h)=\widetilde{E}_h^{(0)}(\widetilde{m}_h)+\widetilde{E}_h^{(1)}(\widetilde{m}_h)+\widetilde{E}_h^{(2)}(\widetilde{m}_h),
\end{equation}
where
\begin{equation}
\label{Eh0}
\begin{split}
\widetilde{E}_h^{(0)}(\widetilde{m}_h)
& = \frac{d^2}{h \left\vert \log h \right\vert} \int_{\Omega_1} \left( \vert \nabla' \widetilde{m}_h \vert^2 + \frac{1}{h^2} \left\vert \partial_3 \widetilde{m}_h \right\vert^2 \right) \mathrm{d} x
\\ & \quad
+ \frac{1}{h^2 \left\vert \log h \right\vert} \int_{\mathbb{R}^3} \left\vert \nabla u_h \right\vert^2 \mathrm{d} x
+ \frac{Q}{h \left\vert \log h \right\vert} \int_{\Omega_1} \Phi(\widetilde{m}_h) \ \mathrm{d} x,
\end{split}
\end{equation}
\begin{equation}
\label{Eh1}
\widetilde{E}_h^{(1)}(\widetilde{m}_h) = - \frac{2}{h \left\vert \log h \right\vert} \int_{\Omega_1} \widetilde{H}_{\mathrm{ext},h} \cdot \widetilde{m}_h \ \mathrm{d} x,
\end{equation}
and
\begin{equation}
\label{Eh2}
\begin{split}
\widetilde{E}_h^{(2)}(\widetilde{m}_h)
& =
\frac{1}{h \left\vert \log h \right\vert} \int_{\Omega_1} \widehat{D}' : \nabla' \widetilde{m}_h \wedge \widetilde{m}_h \ \mathrm{d} x
\\
& \quad +
\frac{1}{h^2 \left\vert \log h \right\vert} \int_{\Omega_1} \widehat{D}_3 \cdot \partial_3 \widetilde{m}_h \wedge \widetilde{m}_h \ \mathrm{d} x.
\end{split}
\end{equation}
\end{notation}

\begin{proposition}[Coercivity]
\label{GC-coercivity}
In the regime~\eqref{regime1.1}$+$\eqref{regime1.2}, there exist constants ${h_0>0}$ and $C>0$ such that, for every $h \in (0,h_0)$,
\begin{equation}
\widetilde{E}_h(\widetilde{m}_h) \geqslant \frac{1}{2} \widetilde{E}_h^{(0)}(\widetilde{m}_h)-C.
\end{equation}
\end{proposition}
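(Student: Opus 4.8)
The plan is to use the decomposition~\eqref{decomposition-coercivity}. Since each of the three summands in~\eqref{Eh0} is nonnegative, we have $\widetilde{E}_h^{(0)}(\widetilde{m}_h) \geqslant 0$, so it suffices to produce $h_0 > 0$ and $C > 0$ such that $\left\vert \widetilde{E}_h^{(1)}(\widetilde{m}_h) \right\vert + \left\vert \widetilde{E}_h^{(2)}(\widetilde{m}_h) \right\vert \leqslant \frac{1}{2} \widetilde{E}_h^{(0)}(\widetilde{m}_h) + C$ for every $h \in (0,h_0)$. For the Zeeman term~\eqref{Eh1}, the pointwise constraint $\left\vert \widetilde{m}_h \right\vert = 1$ gives $\left\vert \widetilde{E}_h^{(1)}(\widetilde{m}_h) \right\vert \leqslant 2 \left\Vert \widetilde{H}_{\mathrm{ext},h} / (h \left\vert \log h \right\vert) \right\Vert_{L^1(\Omega_1)}$, and by~\eqref{regime1.2} the right-hand side converges to $2 \left\vert \gamma \right\vert \left\Vert \widetilde{H}_{\mathrm{ext},0} \right\Vert_{L^1(\Omega_1)}$, hence stays bounded; this only contributes to $C$.

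The substantial part is the Dzyaloshinskii--Moriya term $\widetilde{E}_h^{(2)}$ of~\eqref{Eh2}. Writing its first summand as $\frac{1}{h \left\vert \log h \right\vert}\int_{\Omega_1}\left( \widehat{D}_1 \cdot \partial_1\widetilde{m}_h \wedge \widetilde{m}_h + \widehat{D}_2 \cdot \partial_2\widetilde{m}_h \wedge \widetilde{m}_h \right) \mathrm{d}x$ and applying Lemma~\ref{lemme-bound-dmi-1-2}, and rewriting its second summand as $\frac{1}{h \left\vert \log h \right\vert}\cdot\frac{1}{h}\int_{\Omega_1}\widehat{D}_3 \cdot \partial_3\widetilde{m}_h \wedge \widetilde{m}_h \ \mathrm{d}x$ and applying Lemma~\ref{lemme-upper-bound-dmi-3}, one bounds $\left\vert \widetilde{E}_h^{(2)}(\widetilde{m}_h) \right\vert$ by a sum of terms of two kinds. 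The critical terms are $\frac{\left\vert \widehat{D}_{13} \right\vert}{h \left\vert \log h \right\vert}\int_{\Omega_1}\left\vert \partial_1\widetilde{m}'_h \wedge \widetilde{m}'_h \right\vert \mathrm{d}x$ and its $\partial_2$-analogue with $\widehat{D}_{23}$: here $\frac{\widehat{D}_{j3}}{h \left\vert \log h \right\vert} = \frac{\widehat{D}_{j3}}{d^2}\cdot\frac{d^2}{h \left\vert \log h \right\vert}$ is \emph{bounded} (it tends to $2\alpha\delta_j$ by~\eqref{regime1.1}), so, bounding $\left\vert \partial_1\widetilde{m}'_h \wedge \widetilde{m}'_h \right\vert \leqslant \left\vert \partial_1\widetilde{m}_h \right\vert$ and using Young's inequality $\left\vert \partial_1\widetilde{m}_h \right\vert \leqslant \frac{\tau}{2}\left\vert \partial_1\widetilde{m}_h \right\vert^2 + \frac{1}{2\tau}$ with a parameter $\tau > 0$ to be fixed, this term is at most $\frac{\tau}{2}\cdot\frac{\left\vert \widehat{D}_{13} \right\vert}{h \left\vert \log h \right\vert}\int_{\Omega_1}\left\vert \nabla'\widetilde{m}_h \right\vert^2 \mathrm{d}x + C(\tau)$, where $C(\tau)$ is bounded because $\frac{\left\vert \widehat{D}_{13} \right\vert}{h \left\vert \log h \right\vert}$ is. Since $\tau$ is a \emph{fixed} constant (independent of $h$) and $\frac{\left\vert \widehat{D}_{j3} \right\vert}{d^2}$ is bounded for $h$ small, we may choose $\tau$ small enough that the coefficient in front of $\int_{\Omega_1}\left\vert \nabla'\widetilde{m}_h \right\vert^2$ produced by each of these two terms is at most $\frac{1}{8}\cdot\frac{d^2}{h \left\vert \log h \right\vert}$.

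The remaining terms carry the coefficients $\left\vert \widehat{D}_{jk} \right\vert$ with $j,k \in \{1,2\}$, or $\left\vert \widehat{D}_{3k} \right\vert$ with $k \in \{1,2,3\}$, multiplied respectively by $\frac{1}{h \left\vert \log h \right\vert}\int_{\Omega_1}\left(1 + \left\vert \partial_j\widetilde{m}_h \right\vert^2\right)\mathrm{d}x$ or $\frac{1}{h \left\vert \log h \right\vert}\int_{\Omega_1}\left(1 + \frac{1}{h^2}\left\vert \partial_3\widetilde{m}_h \right\vert^2\right)\mathrm{d}x$. By the last two conditions in~\eqref{regime1.1}, each of these coefficients divided by $h \left\vert \log h \right\vert$ tends to $0$, and since $\frac{d^2}{h \left\vert \log h \right\vert} \to \alpha > 0$, each of them divided by $d^2$ also tends to $0$; thus, for $h$ small, the constant ``$1$'' contributions are uniformly bounded and go into $C$, while the gradient contributions are bounded by an arbitrarily small multiple of $\frac{d^2}{h \left\vert \log h \right\vert}\int_{\Omega_1}\left(\left\vert \nabla'\widetilde{m}_h \right\vert^2 + \frac{1}{h^2}\left\vert \partial_3\widetilde{m}_h \right\vert^2\right)\mathrm{d}x$, which for $h$ small enough we arrange to be at most $\frac{1}{4}\cdot\frac{d^2}{h \left\vert \log h \right\vert}\int_{\Omega_1}\left(\left\vert \nabla'\widetilde{m}_h \right\vert^2 + \frac{1}{h^2}\left\vert \partial_3\widetilde{m}_h \right\vert^2\right)\mathrm{d}x$ in total. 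Summing all contributions gives $\left\vert \widetilde{E}_h^{(2)}(\widetilde{m}_h) \right\vert \leqslant \frac{1}{2}\,\frac{d^2}{h \left\vert \log h \right\vert}\int_{\Omega_1}\left(\left\vert \nabla'\widetilde{m}_h \right\vert^2 + \frac{1}{h^2}\left\vert \partial_3\widetilde{m}_h \right\vert^2\right)\mathrm{d}x + C \leqslant \frac{1}{2}\widetilde{E}_h^{(0)}(\widetilde{m}_h) + C$, which together with the Zeeman bound proves the proposition.

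\textbf{Main obstacle.} The delicate point is the handling of the $\widehat{D}_{13}$- and $\widehat{D}_{23}$-terms. Unlike the other components of $\widehat{D}$, these are of the same order $h \left\vert \log h \right\vert$ as the exchange prefactor $d^2$, so they do not vanish in the limit and cannot simply be discarded: they must be absorbed into a definite fraction of the exchange energy. The Young-inequality argument with a fixed small parameter does exactly this, the key being that, although $1/(h \left\vert \log h \right\vert) \to \infty$, the renormalized coefficient $\widehat{D}_{j3}/(h \left\vert \log h \right\vert)$ remains bounded, which keeps the error term from Young's inequality under control.
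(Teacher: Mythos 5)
Your proposal is correct and follows essentially the same route as the paper: the decomposition \eqref{decomposition-coercivity}, the $L^1$ bound on the Zeeman term via \eqref{regime1.2}, and the absorption of the Dzyaloshinskii--Moriya contribution through Lemma~\ref{lemme-bound-dmi-1-2}, Lemma~\ref{lemme-upper-bound-dmi-3} and a Young inequality with a fixed small parameter, exploiting that $\widehat{D}_{j3}/d^2$ stays bounded while the remaining components of $\widehat{D}$ are $o(h\left\vert \log h \right\vert)$. The only difference is cosmetic bookkeeping (you apply Young's inequality to $\left\vert \partial_j \widetilde{m}_h \right\vert$ after the crude bound $\left\vert \partial_j \widetilde{m}'_h \wedge \widetilde{m}'_h \right\vert \leqslant \left\vert \partial_j \widetilde{m}_h \right\vert$, the paper applies it directly to the product), which does not change the argument.
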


\begin{proof}
Let $h>0$. We use the decomposition~\eqref{decomposition-coercivity} of $\widetilde{E}_h(\widetilde{m}_h)$. We clearly have $\widetilde{E}_h^{(0)}(\widetilde{m}_h) \geqslant 0$.

The strategy for the two remaining terms is the following. On the one hand, the energy $\widetilde{E}_h^{(1)}(\widetilde{m}_h)$ being bounded, we will absorb it in the constant $C$. On the other hand, we will distribute the contribution of the energy $\widetilde{E}_h^{(2)}(\widetilde{m}_h)$ in the energy $\widetilde{E}_h^{(0)}(\widetilde{m}_h)$ and in the constant $C$, using Lemma~\ref{lemme-bound-dmi-1-2} and Lemma~\ref{lemme-upper-bound-dmi-3}. Another example of absorbing the DMI into other terms of the micromagnetic energy can be found in~\cite{CI_DMI}. In that article, Ignat and C\^ote absorb the DMI into the exchange and anisotropy energies, in order to prove coercivity and then a $\Gamma$-convergence result.

Using H\"older's inequality with $\left\vert \widetilde{m}_h \right\vert = 1$ in $\Omega_1$, we have
\begin{align*}
\vert \widetilde{E}_h^{(1)}(\widetilde{m}_h) \vert
& =
\left\vert \frac{2}{h \left\vert \log h \right\vert} \int_{\Omega_1} \widetilde{H}_{\mathrm{ext},h} \cdot \widetilde{m}_h \ \mathrm{d} x
\right\vert
\leqslant
2 \left\Vert \frac{\widetilde{H}_{\mathrm{ext},h}}{h \left\vert \log h \right\vert} \right\Vert_{L^1(\Omega_1)}.
\end{align*}
Furthermore, in the regime~\eqref{regime1.1}+\eqref{regime1.2}, $\left\Vert \frac{\widetilde{H}_{\mathrm{ext},h}}{h \left\vert \log h \right\vert} \right\Vert_{L^1(\Omega_1)} \rightarrow \left\vert \gamma \right\vert \Vert \widetilde{H}_{\mathrm{ext},0} \Vert_{L^1(\Omega_1)}$, hence there exists a constant $C(\gamma)>0$ such that, for $h>0$ sufficiently small, we have $\vert \widetilde{E}_h^{(1)}(\widetilde{m}_h) \vert \leqslant C(\gamma)$.

\noindent
By Lemma~\ref{lemme-bound-dmi-1-2} and Lemma~\ref{lemme-upper-bound-dmi-3}, we have
\begin{align*}
\vert \widetilde{E}_h^{(2)}(\widetilde{m}_h) \vert
& \leqslant
\frac{1}{h \left\vert \log h \right\vert} \left[ \sum_{j,k=1}^2 \vert \widehat{D}_{jk} \vert \int_{\Omega_1} \left( 1+\vert \nabla' \widetilde{m}_h \vert^2 \right) \mathrm{d} x \right.
\\
& \qquad \qquad \qquad
+ \vert \widehat{D}_{13} \vert \int_{\Omega_1} \left\vert \partial_1 \widetilde{m}'_h \wedge \widetilde{m}'_h \right\vert \mathrm{d} x
\\
& \qquad \qquad \qquad
+ \vert \widehat{D}_{23} \vert \int_{\Omega_1} \left\vert \partial_2 \widetilde{m}'_h \wedge \widetilde{m}'_h \right\vert \mathrm{d} x
\\
& \qquad \qquad \qquad \left.
+ \sum_{k=1}^3 \vert \widehat{D}_{3k} \vert \int_{\Omega_1} \left( 1+\frac{1}{h^2}  \left\vert \partial_3 \widetilde{m}_h \right\vert^2 \right) \mathrm{d} x \right].
\end{align*}
Let $\varepsilon>0$. By Young's inequality,
\begin{align*}
\frac{\vert \widehat{D}_{13} \vert}{d^2}
\int_{\Omega_1} \left\vert \partial_1 \widetilde{m}'_h \wedge \widetilde{m}'_h \right\vert \mathrm{d} x
& \leqslant
\varepsilon \int_{\Omega_1} \left\vert \partial_1 \widetilde{m}'_h \right\vert^2 \mathrm{d} x
+ \frac{1}{4\varepsilon} \left( \frac{\vert \widehat{D}_{13} \vert}{d^2} \right)^2 \int_{\Omega_1} \left\vert \widetilde{m}'_h \right\vert^2 \mathrm{d} x
\\
& \leqslant
\varepsilon \int_{\Omega_1} \left\vert \partial_1 \widetilde{m}'_h \right\vert^2 \mathrm{d} x
+ \frac{1}{4\varepsilon} \left( \frac{\vert \widehat{D}_{13} \vert}{d^2} \right)^2 \left\vert \Omega_1 \right\vert,
\end{align*}
since $\left\vert \widetilde{m}_h' \right\vert \leqslant \left\vert \widetilde{m}_h \right\vert = 1$. Using the same arguments,
\begin{align*}
\frac{\vert \widehat{D}_{23} \vert}{d^2}
\int_{\Omega_1} \left\vert \partial_2 \widetilde{m}'_h \wedge \widetilde{m}'_h \right\vert \mathrm{d} x
& \leqslant
\varepsilon \int_{\Omega_1} \left\vert \partial_2 \widetilde{m}'_h \right\vert^2 \mathrm{d} x
+ \frac{1}{4\varepsilon} \left( \frac{\vert \widehat{D}_{23} \vert}{d^2} \right)^2 \left\vert \Omega_1 \right\vert.
\end{align*}
Furthermore, we note that, in the regime~\eqref{regime1.1}$+$\eqref{regime1.2},
\begin{align*}
& \frac{\vert \widehat{D}_{13} \vert}{d^2} \rightarrow 2\delta_1,
\ \
\frac{\vert \widehat{D}_{23} \vert}{d^2} \rightarrow 2\delta_2,
\ \
\frac{1}{h \left\vert \log h \right\vert} \sum_{j,k=1}^2 \vert \widehat{D}_{jk} \vert \ll 1,
\ \
\frac{1}{h \left\vert \log h \right\vert} \sum_{k=1}^3 \vert \widehat{D}_{3k} \vert \ll 1.
\end{align*}
We deduce that, for $h>0$ sufficiently small,
\begin{align*}
\vert \widetilde{E}_h^{(2)}(\widetilde{m}_h) \vert
& \leqslant
o_h(1) \int_{\Omega_1} \left( 1+\left\vert \nabla' \widetilde{m}_h \right\vert^2 \right) \mathrm{d} x
+ \frac{d^2\varepsilon}{h \left\vert \log h \right\vert} \int_{\Omega_1} \left( \left\vert \partial_1 \widetilde{m}'_h \right\vert^2 + \left\vert \partial_2 \widetilde{m}'_h \right\vert^2 \right) \mathrm{d} x
\\
& \quad
+ \frac{d^2}{h \left\vert \log h \right\vert} \frac{\left\vert \Omega_1 \right\vert}{\varepsilon} \left( \delta_1^2+\delta_2^2+1 \right)
+ o_h(1) \int_{\Omega_1} \left( 1+\frac{1}{h^2}  \left\vert \partial_3 \widetilde{m}_h \right\vert^2 \right) \mathrm{d} x
\\
& \leqslant
\left( \frac{d^2 \varepsilon}{h \left\vert \log h \right\vert}
+o_h(1) \right) \int_{\Omega_1} \left( \left\vert \nabla' \widetilde{m}_h \right\vert^2 + \frac{1}{h^2} \left\vert \partial_3 \widetilde{m}_h \right\vert^2 \right) \mathrm{d} x
\\
& \quad
+ \frac{d^2}{h \left\vert \log h \right\vert} \frac{\left\vert \Omega_1 \right\vert}{\varepsilon} \left( \delta_1^2+\delta_2^2+1 \right)
+ o_h(1).
\end{align*}

\noindent
We eventually combine our estimates on $\widetilde{E}_h^{(1)}(\widetilde{m}_h)$ and $\widetilde{E}_h^{(2)}(\widetilde{m}_h)$: for $h>0$ sufficiently small, we have
\begin{align*}
\widetilde{E}_h(\widetilde{m}_h)
& \geqslant \widetilde{E}_h^{(0)}(\widetilde{m}_h)-C(\gamma)
\\
& \quad - \left( \frac{d^2}{h \left\vert \log h \right\vert}  \varepsilon+o_h(1) \right) \int_{\Omega_1} \left( \left\vert \nabla' \widetilde{m}_h \right\vert^2 + \frac{1}{h^2} \left\vert \partial_3 \widetilde{m}_h \right\vert^2 \right) \mathrm{d} x
\\
& \quad - \frac{d^2}{h \left\vert \log h \right\vert} \frac{\left\vert \Omega_1 \right\vert}{\varepsilon} \left( \delta_1^2+\delta_2^2+1 \right)
- o_h(1).
\end{align*}
Since $\frac{d^2}{h \left\vert \log h \right\vert} \rightarrow \alpha$ in the regime~\eqref{regime1.1}$+$\eqref{regime1.2}, then choosing $\varepsilon>0$ so small that $1-\varepsilon -o_h(1) \geqslant \frac{3}{4}$ and setting $C(\alpha,\delta_1,\delta_2)=(\alpha+1) \left\vert \Omega_1 \right\vert (\delta_1^2+\delta_2^2+1) + 1$, we get
\begin{align*}
\widetilde{E}_h(\widetilde{m}_h)
& \geqslant \frac{1}{2} \widetilde{E}_h^{(0)}(\widetilde{m}_h) - C(\gamma) - \frac{1}{\varepsilon}C(\alpha,\delta_1,\delta_2).
\end{align*}
Setting $C=C(\gamma)+\frac{1}{\varepsilon}C(\alpha,\delta_1,\delta_2)$, we get the expected estimate.
\end{proof}

\subsection{Gamma-convergence}
\label{subsection1.3}

As mentioned in the introduction, the stray-field energy in $\widetilde{E}_h(\widetilde{m}_h)$ has been studied by Kohn and Slastikov~\cite{KS05} in the regime we are considering. We cite~\cite[Lemma~4]{KS05} and the estimate~(33) from~\cite{KS05}:

\begin{theoreme}
\label{limI}
For $h>0$, set
\begin{equation}
\label{exprIh}
I(h) = \int_0^h \int_{\partial \omega} \int_0^h \int_{\partial \omega} \frac{\left( \overline{m}_h \cdot \nu \right)(x') \left( \overline{m}_h \cdot \nu  \right)(y')}{\sqrt{\vert x'-y' \vert^2+(s-t)^2}} \mathrm{d} x' \mathrm{d} s \mathrm{d} y' \mathrm{d} t,
\end{equation}
where $\nu(x')$ is the outer unit normal vector at a point $x' \in \partial \omega$.

If $(\overline{m}_h)_{h>0}$ converges weakly to $\overline{m}_0$ in $H^1(\omega)$, then
\begin{equation}
\lim\limits_{h \rightarrow 0} \frac{I(h)}{h^2\left\vert \log h \right\vert}
= 2 \int_{\partial \omega} \left( \overline{m}_0 \cdot \nu \right)^2 \mathrm{d} \mathcal{H}^1.
\end{equation}
\end{theoreme}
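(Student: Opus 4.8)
The plan is to integrate out the vertical variables $s,t$ first, which turns $I(h)$ into a double integral over $\partial\omega$ against an explicit $h$-dependent kernel, and then to extract the logarithmically divergent part by a symmetrization trick. Writing $\mu_h:=\overline{m}_h\cdot\nu$ on $\partial\omega$ (so $|\mu_h|\leqslant 1$), Fubini gives
\[
I(h)=\int_{\partial\omega}\int_{\partial\omega}\mu_h(x')\,\mu_h(y')\,K_h\!\big(|x'-y'|\big)\,\mathrm{d}\mathcal{H}^1(x')\,\mathrm{d}\mathcal{H}^1(y'),\qquad K_h(a):=\int_0^h\!\!\int_0^h\frac{\mathrm{d}s\,\mathrm{d}t}{\sqrt{a^2+(s-t)^2}}.
\]
Only three facts about $K_h$ are needed: it is nonnegative and nonincreasing; $K_h(a)\leqslant h^2/a$, because $\sqrt{a^2+(s-t)^2}\geqslant a$; and, by one further use of Fubini and an explicit one-dimensional integration, $\int_0^\rho K_h(a)\,\mathrm{d}a=h^2|\log h|+O_\rho(h^2)$ for each fixed $\rho>0$ — the $h^2|\log h|$ coming from $\int_h^\rho K_h(a)\,\mathrm{d}a$, where $K_h(a)\approx h^2/a$.

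\textbf{Symmetrization.} Using $\mu_h(x')\mu_h(y')=\tfrac12\big(\mu_h(x')^2+\mu_h(y')^2\big)-\tfrac12\big(\mu_h(x')-\mu_h(y')\big)^2$ and the symmetry of the kernel,
\[
I(h)=\int_{\partial\omega}\mu_h(x')^2\,w_h(x')\,\mathrm{d}\mathcal{H}^1(x')-\frac12\int_{\partial\omega}\int_{\partial\omega}\big(\mu_h(x')-\mu_h(y')\big)^2K_h(|x'-y'|)\,\mathrm{d}\mathcal{H}^1\,\mathrm{d}\mathcal{H}^1,
\]
where $w_h(x'):=\int_{\partial\omega}K_h(|x'-y'|)\,\mathrm{d}\mathcal{H}^1(y')$. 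I first claim that $w_h/(h^2|\log h|)\to 2$ uniformly on $\partial\omega$. Parametrizing $\partial\omega$ by arclength around $x'$, smoothness of $\partial\omega$ ensures that on an arc of length $\rho$ the Euclidean distance $|x'-y'|$ and the arclength distance agree up to a factor $1+O(\rho^2)$, and that $|x'-y'|\geqslant c(\rho)>0$ for $y'$ outside that arc; combining this with the monotonicity of $K_h$, the bound $K_h(a)\leqslant h^2/a$, and the asymptotics of $\int_0^\rho K_h$, one obtains, uniformly in $x'$,
\[
\frac{2}{1+O(\rho^2)}+o_h(1)\;\leqslant\;\frac{w_h(x')}{h^2|\log h|}\;\leqslant\;\frac{2}{1-O(\rho^2)}+o_h(1);
\]
sending $h\to 0$ and then $\rho\to 0$ proves the claim (the factor $2$ accounting for the two sides of $x'$ along $\partial\omega$).

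\textbf{Passage to the limit and the negligible term.} Since the trace map $H^1(\omega)\to L^2(\partial\omega)$ is compact and $\overline{m}_h\rightharpoonup\overline{m}_0$ in $H^1(\omega)$, we have $\mu_h\to\mu_0:=\overline{m}_0\cdot\nu$ strongly in $L^2(\partial\omega)$, hence $\mu_h^2\to\mu_0^2$ in $L^1(\partial\omega)$; together with the uniform limit for $w_h$ this yields $(h^2|\log h|)^{-1}\int_{\partial\omega}\mu_h^2\,w_h\,\mathrm{d}\mathcal{H}^1\to 2\int_{\partial\omega}\mu_0^2\,\mathrm{d}\mathcal{H}^1$. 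For the other term, $K_h(a)\leqslant h^2/a$ and $|x'-y'|^{-1}\leqslant\operatorname{diam}(\partial\omega)\,|x'-y'|^{-2}$ give
\[
\frac12\int_{\partial\omega}\int_{\partial\omega}\big(\mu_h(x')-\mu_h(y')\big)^2K_h(|x'-y'|)\,\mathrm{d}\mathcal{H}^1\,\mathrm{d}\mathcal{H}^1\leqslant C\,h^2\,[\mu_h]_{H^{1/2}(\partial\omega)}^2\leqslant C\,h^2,
\]
since $(\overline{m}_h)_{h>0}$ is bounded in $H^1(\omega)$ and the trace $H^1(\omega)\to H^{1/2}(\partial\omega)$ is continuous; in particular this term is $o(h^2|\log h|)$. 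Adding the two contributions gives $I(h)/(h^2|\log h|)\to 2\int_{\partial\omega}(\overline{m}_0\cdot\nu)^2\,\mathrm{d}\mathcal{H}^1$.

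\textbf{Main obstacle.} The only delicate point is the uniform asymptotics $w_h\sim 2h^2|\log h|$: pinning down the exact constant and the uniformity in $x'$ requires carefully tracking the gap between the Euclidean distance and the arclength on $\partial\omega$ (this is where smoothness of $\partial\omega$ is used) and checking that the $O_\rho(h^2)$ remainders in the kernel asymptotics stay below the $h^2|\log h|$ scale. Everything else is soft: the compact trace embedding handles the passage to the limit, and the crude bound $K_h(a)\leqslant h^2/a$ disposes of the off-diagonal term.
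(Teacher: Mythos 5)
Your argument is correct, but it is worth pointing out that the paper does not prove this statement at all: Theorem~\ref{limI} is quoted from Kohn--Slastikov (\cite[Lemma~4]{KS05} together with their estimate~(33)), with the constant $2$ traced back to an integral-operator computation of Carbou~\cite{Car01}. What you supply is therefore a self-contained reproof rather than a variant of an in-paper argument. Your route -- Fubini reduction to the boundary kernel $K_h(a)=\int_0^h\int_0^h\bigl(a^2+(s-t)^2\bigr)^{-1/2}\,\mathrm{d}s\,\mathrm{d}t$, the symmetrization $\mu_h(x')\mu_h(y')=\tfrac12\mu_h(x')^2+\tfrac12\mu_h(y')^2-\tfrac12(\mu_h(x')-\mu_h(y'))^2$, the uniform diagonal asymptotics $w_h\sim 2h^2\vert\log h\vert$, and the $H^{1/2}(\partial\omega)$ trace bound killing the off-diagonal term -- is sound: I checked that $\int_0^\rho K_h(a)\,\mathrm{d}a=h^2\vert\log h\vert+O_\rho(h^2)$ (indeed $\int_0^h\int_0^h\log\frac{1}{\vert s-t\vert}\,\mathrm{d}s\,\mathrm{d}t=h^2\vert\log h\vert+\tfrac32h^2$), which yields the constant $2$ from the two sides of $x'$ along $\partial\omega$, consistent with the factor $\frac{1}{4\pi}\cdot 2=\frac{1}{2\pi}$ appearing in the heuristic~\eqref{heuristic_strayfield}; and the remainder bound $Ch^2[\mu_h]_{H^{1/2}(\partial\omega)}^2=o(h^2\vert\log h\vert)$ is legitimate since weak convergence gives boundedness of $(\overline m_h)_h$ in $H^1(\omega)$, the trace is continuous into $H^{1/2}(\partial\omega)$, $\nu$ is smooth, and on a 1-dimensional boundary the Gagliardo exponent is indeed $\vert x'-y'\vert^{-2}$. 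Two small points you may want to make explicit if you write this up: Fubini is justified because $K_h(a)\leqslant 2h\,\mathrm{arcsinh}(h/a)\lesssim h\log(Ch/a)$ near $a=0$, so $K_h(\vert x'-y'\vert)$ is integrable on $\partial\omega\times\partial\omega$ (the cruder bound $h^2/a$ alone would not suffice near the diagonal); and the equivalence of the Euclidean-distance Gagliardo integral with the intrinsic $H^{1/2}(\partial\omega)$ seminorm uses the chord-arc property of the smooth compact boundary (plus the positive distance between its components), the same geometric input that gives you the uniformity of the $w_h$ asymptotics in $x'$. The payoff of your approach is a short, elementary proof that avoids invoking the external references.
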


In particular, the constant~$2$ in the above limit is computed by using an integral operator in~\cite{Car01}.

\begin{theoreme}
\label{devstrayfield}
For $h>0$, consider $g_h$ as in~\eqref{exprgh}, $m_h \colon \Omega_h \rightarrow \mathbb{S}^2$, $u_h$ as in~\eqref{eqdistrib-uh}, $\overline{m}_h$ as in~\eqref{def_mhmean} \linebreak and $\widetilde{m}_h \colon \Omega_1 \rightarrow \mathbb{S}^2$ satisfying~\eqref{rel_mhtilde_mh}. We have, as $h$ tends to zero,
\begin{equation}
\begin{split}
& \frac{1}{h^2 \left\vert \log h \right\vert} \int_{\mathbb{R}^3} \left\vert \nabla u_h \right\vert^2 \mathrm{d} x
\\ & \qquad \qquad = \frac{1}{h \left\vert \log h \right\vert} \int_{\mathbb{R}^2} \left\vert \mathcal{F}(\overline{m}_h \cdot e_3)(\xi') \right\vert^2 g_h(\left\vert \xi' \right\vert) \mathrm{d} \xi'
\\ & \qquad \qquad \quad + \frac{I(h)}{4\pi h^2 \left\vert \log h \right\vert}
\\ & \qquad \qquad \quad + \left( 1+\left\Vert \mathrm{div}'(\widetilde{m}_h) \right\Vert_{L^2(\Omega_1)}^2
 + \frac{1}{h^2} \left\Vert \partial_3 \widetilde{m}_h \right\Vert_{L^2(\Omega_1)}^2 \right) O \left( \frac{1}{\left\vert \log h \right\vert} \right),
\end{split}
\end{equation}
where $I(h)$ is defined in~\eqref{exprIh}, $\mathcal{F}$ denotes the Fourier transformation and $e_3$ is the third unit vector of the standard orthonormal basis in $\mathbb{R}^3$.
\end{theoreme}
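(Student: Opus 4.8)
The plan is to reduce the stray-field energy of $m_h$ to that of its $x_3$-average $\overline{m}_h$, to insert the Fourier representation of the stray-field energy recalled in the Introduction (valid because $\overline{m}_h$ does not depend on $x_3$), and then to isolate inside the resulting integral the two leading contributions -- the surface charges carried by $\overline{m}_{h,3}$ and the lateral charges carried by $\overline{m}_h'\cdot\nu'$, the latter reproducing $I(h)$ -- pushing everything else into the $O(1/|\log h|)$ remainder. \emph{Step 1 (reduction to the average).} Let $\overline{u}_h$ be as in~\eqref{eqdistrib-uhbar} and let $v_h$ solve $\Delta v_h = \mathrm{div}\bigl((m_h-\overline{m}_h)\mathds{1}_{\Omega_h}\bigr)$ in $\mathbb{R}^3$. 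By linearity of $\mu\mapsto u[\mu]$, $\int_{\mathbb{R}^3}|\nabla u_h|^2 = \int_{\mathbb{R}^3}|\nabla\overline{u}_h|^2 + 2\int_{\mathbb{R}^3}\nabla\overline{u}_h\cdot\nabla v_h + \int_{\mathbb{R}^3}|\nabla v_h|^2$, and testing $\Delta u[\mu\mathds{1}_{\Omega_h}] = \mathrm{div}(\mu\mathds{1}_{\Omega_h})$ with $u[\mu\mathds{1}_{\Omega_h}]$ gives $\|\nabla u[\mu\mathds{1}_{\Omega_h}]\|_{L^2(\mathbb{R}^3)}\leqslant\|\mu\|_{L^2(\Omega_h)}$; hence $\|\nabla\overline{u}_h\|_{L^2(\mathbb{R}^3)}^2\leqslant h|\omega|$ and $\|\nabla v_h\|_{L^2(\mathbb{R}^3)}^2\leqslant\|m_h-\overline{m}_h\|_{L^2(\Omega_h)}^2$. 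Since $m_h-\overline{m}_h$ has zero $x_3$-average, the Poincar\'e inequality on $(0,h)$ and the rescaling $x_3 = hx_3'$ give $\|m_h-\overline{m}_h\|_{L^2(\Omega_h)}^2\leqslant\frac{h}{\pi^2}\|\partial_3\widetilde{m}_h\|_{L^2(\Omega_1)}^2$; dividing by $h^2|\log h|$ and using Cauchy--Schwarz and Young's inequality, the cross term and the $v_h$-term contribute $\bigl(1+\tfrac1{h^2}\|\partial_3\widetilde{m}_h\|_{L^2(\Omega_1)}^2\bigr)O(1/|\log h|)$, so it remains to analyse $\tfrac1{h^2|\log h|}\int_{\mathbb{R}^3}|\nabla\overline{u}_h|^2$.

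\emph{Step 2 (Fourier representation, charge splitting).} The formula recalled in the Introduction gives $\int_{\mathbb{R}^3}|\nabla\overline{u}_h|^2 = h\int_{\mathbb{R}^2}\frac{|\xi'\cdot\mathcal{F}(\overline{m}_h'\mathds{1}_\omega)(\xi')|^2}{|\xi'|^2}\bigl(1-g_h(|\xi'|)\bigr)\,\mathrm{d}\xi' + h\int_{\mathbb{R}^2}|\mathcal{F}(\overline{m}_{h,3}\mathds{1}_\omega)(\xi')|^2 g_h(|\xi'|)\,\mathrm{d}\xi'$; after dividing by $h^2|\log h|$ the second term is exactly the first term in the statement (with $\overline{m}_h\cdot e_3 = \overline{m}_{h,3}$, set to $0$ outside $\omega$). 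For the first term I would write $\mathrm{div}'(\overline{m}_h'\mathds{1}_\omega) = \rho_h^{v} - \rho_h^{b}$, where $\rho_h^{v} = (\mathrm{div}'\overline{m}_h)\mathds{1}_\omega\in L^2(\mathbb{R}^2)$ and $\rho_h^{b}$ is the measure $(\overline{m}_h'\cdot\nu')$ times arclength on $\partial\omega$, so that $\xi'\cdot\mathcal{F}(\overline{m}_h'\mathds{1}_\omega) = \frac{1}{2\pi i}\bigl(\mathcal{F}(\rho_h^{v})-\mathcal{F}(\rho_h^{b})\bigr)$; expanding $|\mathcal{F}(\rho_h^{v})-\mathcal{F}(\rho_h^{b})|^2$ splits the integral into a $|\mathcal{F}(\rho_h^{b})|^2$-part, a cross part, and a $|\mathcal{F}(\rho_h^{v})|^2$-part.

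\emph{Step 3 (boundary term and remainders).} The core computation is the kernel identity $\frac{h(1-g_h(|\xi'|))}{|\xi'|^2} = \pi\,\mathcal{F}(G_h)(\xi')$ with $G_h(z') = \int_0^h\!\int_0^h (|z'|^2+(s-t)^2)^{-1/2}\,\mathrm{d}s\,\mathrm{d}t$, which follows from $\mathcal{F}_{\mathbb{R}^2}\bigl[(|z'|^2+a^2)^{-1/2}\bigr](\xi') = |\xi'|^{-1}e^{-2\pi a|\xi'|}$ and the explicit value of $\int_0^h\!\int_0^h e^{-2\pi|s-t||\xi'|}\,\mathrm{d}s\,\mathrm{d}t$. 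Plugging it into the $|\mathcal{F}(\rho_h^{b})|^2$-part and using Parseval -- after truncating the $\xi'$-integral (legitimate since $\int_{\partial\omega}\!\int_{\partial\omega}G_h(x'-y')\,\mathrm{d}\mathcal{H}^1\mathrm{d}\mathcal{H}^1<\infty$) and letting the truncation tend to infinity -- identifies this part with $\tfrac1{4\pi}\int_{\partial\omega}\!\int_{\partial\omega}(\overline{m}_h'\cdot\nu')(x')(\overline{m}_h'\cdot\nu')(y')\,G_h(x'-y')\,\mathrm{d}\mathcal{H}^1\mathrm{d}\mathcal{H}^1 = \frac{I(h)}{4\pi}$; this is the computation of~\cite{Car01} underlying Theorem~\ref{limI}. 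For the $|\mathcal{F}(\rho_h^{v})|^2$-part, the elementary inequality $1-g_h(|\xi'|)\leqslant\pi h|\xi'|$ yields $\frac{h}{4\pi^2}\int\frac{|\mathcal{F}(\rho_h^{v})|^2}{|\xi'|^2}(1-g_h)\,\mathrm{d}\xi'\leqslant\frac{h^2}{4\pi}\|\rho_h^{v}\|_{\dot{H}^{-1/2}(\mathbb{R}^2)}^2\leqslant C(\omega)h^2\|\mathrm{div}'(\overline{m}_h)\|_{L^2(\omega)}^2$, the last step because an $L^2$-function supported in $\overline\omega$ has $\dot{H}^{-1/2}$-norm controlled by its $L^2$-norm. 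For the cross part I pass back to physical space via the same identity: its modulus is at most $\frac1{2\pi}\int_\omega|\mathrm{div}'(\overline{m}_h)(x')|\int_{\partial\omega}G_h(x'-y')\,\mathrm{d}\mathcal{H}^1(y')\,\mathrm{d}x'$ (using $|\overline{m}_h'\cdot\nu'|\leqslant 1$), and since $G_h(z')\leqslant h^2/|z'|$ while $x'\mapsto\int_{\partial\omega}|x'-y'|^{-1}\,\mathrm{d}\mathcal{H}^1(y')\lesssim 1+\log^+\!\bigl(1/\mathrm{dist}(x',\partial\omega)\bigr)$ belongs to $L^2(\omega)$ for smooth $\omega$, Cauchy--Schwarz bounds it by $C(\omega)h^2\|\mathrm{div}'(\overline{m}_h)\|_{L^2(\omega)}$. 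Finally $\mathrm{div}'(\overline{m}_h) = \int_0^1\mathrm{div}'(\widetilde{m}_h)(\cdot,x_3)\,\mathrm{d}x_3$ and Jensen's inequality give $\|\mathrm{div}'(\overline{m}_h)\|_{L^2(\omega)}\leqslant\|\mathrm{div}'(\widetilde{m}_h)\|_{L^2(\Omega_1)}$; dividing the two remainder pieces by $h^2|\log h|$ and using Young, they contribute $\bigl(1+\|\mathrm{div}'(\widetilde{m}_h)\|_{L^2(\Omega_1)}^2\bigr)O(1/|\log h|)$. Combining Steps 1--3 gives the claimed expansion.

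\emph{Main obstacle.} The delicate point is that the remainders must be $O(1/|\log h|)$, not merely $o(1)$. For the volume--boundary cross term a plain Cauchy--Schwarz on the stray-field quadratic form only yields an $O(1/\sqrt{|\log h|})$ bound, which is why one is forced to leave Fourier space and exploit both $G_h(z')\leqslant h^2/|z'|$ and the $L^2(\omega)$-integrability of $\log^+(1/\mathrm{dist}(\cdot,\partial\omega))$ -- hence the smoothness of $\partial\omega$ -- to recover the correct power of $|\log h|$. A secondary technical nuisance is the identification of the $|\mathcal{F}(\rho_h^{b})|^2$-part with $I(h)/(4\pi)$: the relevant Fourier integral is only conditionally convergent and must be regularized by truncation before Parseval can be applied.
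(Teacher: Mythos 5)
Your argument is correct, but note that the paper does not actually prove this statement: it is imported verbatim from Kohn--Slastikov (estimate~(33) and Lemma~4 of~\cite{KS05}, with the constant coming from the integral operator of~\cite{Car01}), so what you have written is a self-contained reconstruction of that proof rather than a variant of anything in this paper. Your route is essentially the standard one and the details check out: the reduction to the $x_3$-average via $\|\nabla v_h\|_{L^2}^2\leqslant\|m_h-\overline{m}_h\|_{L^2(\Omega_h)}^2\leqslant\frac{h}{\pi^2}\|\partial_3\widetilde{m}_h\|_{L^2(\Omega_1)}^2$ and Cauchy--Schwarz on the cross term does land in the stated remainder class; the kernel identity $\frac{h(1-g_h(|\xi'|))}{|\xi'|^2}=\pi\,\mathcal{F}(G_h)(\xi')$ is correct with your Fourier convention and turns the boundary-charge part into exactly $I(h)/(4\pi)$ as defined in~\eqref{exprIh}; the bound $1-g_h\leqslant\pi h|\xi'|$ together with the $L^2\hookrightarrow\dot H^{-1/2}$ control for compactly supported densities handles the volume part; and your physical-space treatment of the volume--boundary cross term (using $G_h\leqslant h^2/|z'|$ and the $L^2(\omega)$-integrability of the logarithmic potential of $\partial\omega$) is indeed what is needed to get $O(1/|\log h|)$ rather than the $O(1/\sqrt{|\log h|})$ a crude Cauchy--Schwarz would give -- you correctly identified this as the delicate point. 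The only mild difference from the cited source is organizational: Kohn--Slastikov argue mostly in physical space with the Newtonian potential, whereas you start from the Fourier formula quoted in the introduction and convert back through $\mathcal{F}(G_h)$; both give the same constants, and your Parseval steps for the measure $\rho_h^b$ do require the truncation/mollification you flag, which is routine since $\mathcal{F}(G_h)\geqslant 0$ and $\int_{\partial\omega}\int_{\partial\omega}G_h\,\mathrm{d}\mathcal{H}^1\mathrm{d}\mathcal{H}^1<\infty$.
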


Both previous theorems will be useful for proving Theorem~\ref{GC}. We now present and prove three theorems (compactness, lower bound and upper bound) on the Gamma-convergence of~$\widetilde{E}_h$ that will lead to Theorem~\ref{GC}.

\begin{theoreme}[Compactness]
\label{GC-compactness}
Consider the regime~\eqref{regime1.1}$+$\eqref{regime1.2}. Assume that there exists a \linebreak constant $C>0$ such that, for every $h>0$, $\widetilde{E}_h(\widetilde{m}_h) \leqslant C$. Then, for a subsequence, $(\widetilde{m}_h)_{h>0}$ converges weakly to $\widetilde{m}_0$ in $H^1$, where $\widetilde{m}_0 \in H^1(\Omega_1,\mathbb{S}^2)$ is independent of $x_3$ and satisfies $\widetilde{m}_{0,3} \equiv 0$.
\end{theoreme}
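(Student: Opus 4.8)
The plan is to let the coercivity estimate of Proposition~\ref{GC-coercivity} do most of the work: it produces a uniform $H^1(\Omega_1)$ bound on $\widetilde m_h$, from which weak compactness and the $x_3$-independence of the limit are immediate, and the only genuinely delicate point is recovering the constraint $\widetilde m_{0,3}\equiv 0$ from the stray-field term. First, from $\widetilde E_h(\widetilde m_h)\le C$ and Proposition~\ref{GC-coercivity}, the quantity $\widetilde E_h^{(0)}(\widetilde m_h)$ is bounded for $h$ small; since each of its three terms is nonnegative ($\Phi\ge 0$ and $\int_{\mathbb R^3}|\nabla u_h|^2\ge 0$) and $\frac{d^2}{h|\log h|}\to\alpha>0$, we get a uniform bound $\int_{\Omega_1}\bigl(|\nabla'\widetilde m_h|^2+\frac1{h^2}|\partial_3\widetilde m_h|^2\bigr)\,\mathrm{d}x\le C_1$ for $h$ small. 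Hence $\|\nabla'\widetilde m_h\|_{L^2(\Omega_1)}$ is bounded, $\|\partial_3\widetilde m_h\|_{L^2(\Omega_1)}^2\le C_1h^2\to 0$, and together with $|\widetilde m_h|=1$ this gives a uniform bound in $H^1(\Omega_1,\mathbb R^3)$. By weak compactness and the Rellich--Kondrachov theorem, along a subsequence $\widetilde m_h\rightharpoonup\widetilde m_0$ in $H^1$ and $\widetilde m_h\to\widetilde m_0$ in $L^2(\Omega_1)$ and a.e., so $|\widetilde m_0|=1$ a.e., i.e. $\widetilde m_0\in H^1(\Omega_1,\mathbb S^2)$; and since $\partial_3\widetilde m_h\rightharpoonup\partial_3\widetilde m_0$ in $L^2$ while $\|\partial_3\widetilde m_h\|_{L^2}\to 0$, we get $\partial_3\widetilde m_0=0$, i.e. $\widetilde m_0$ is independent of $x_3$.

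It then remains to prove $\widetilde m_{0,3}\equiv 0$, which is where Theorems~\ref{limI} and~\ref{devstrayfield} enter. Since $\overline m_h(x')=\int_0^1\widetilde m_h(x',x_3)\,\mathrm{d}x_3$ and $x_3$-averaging is a bounded linear map $H^1(\Omega_1)\to H^1(\omega)$, along the same subsequence $\overline m_h\rightharpoonup\overline m_0$ in $H^1(\omega)$, where $\overline m_0$ is $\widetilde m_0$ regarded as a map on $\omega$. From the bound on $\widetilde E_h^{(0)}$ we have $\frac1{h^2|\log h|}\int_{\mathbb R^3}|\nabla u_h|^2\le C$, and I would insert the decomposition of Theorem~\ref{devstrayfield}: its remainder is $\bigl(1+\|\mathrm{div}'(\widetilde m_h)\|_{L^2(\Omega_1)}^2+\frac1{h^2}\|\partial_3\widetilde m_h\|_{L^2(\Omega_1)}^2\bigr)\,O(1/|\log h|)\to 0$, because the two $L^2$ quantities are controlled by the first step; and $\frac{I(h)}{4\pi h^2|\log h|}$ converges by Theorem~\ref{limI} (to $\frac1{2\pi}\int_{\partial\omega}(\overline m_0\cdot\nu)^2\,\mathrm{d}\mathcal H^1$), hence stays bounded. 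Therefore the leading Fourier term is bounded,
\[
\frac{1}{h|\log h|}\int_{\mathbb R^2}\bigl|\mathcal F(\overline m_{h,3}\mathds 1_\omega)(\xi')\bigr|^2\,g_h(|\xi'|)\,\mathrm{d}\xi'\ \le\ C,
\]
so that $\int_{\mathbb R^2}\bigl|\mathcal F(\overline m_{h,3}\mathds 1_\omega)\bigr|^2 g_h\le C\,h|\log h|\to 0$.

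To pass to the limit in this last integral, I would use that by Rellich--Kondrachov on $\omega$, $\overline m_{h,3}\to\overline m_{0,3}$ strongly in $L^2(\omega)$, so $\mathcal F(\overline m_{h,3}\mathds 1_\omega)\to\mathcal F(\overline m_{0,3}\mathds 1_\omega)$ strongly in $L^2(\mathbb R^2)$. Writing $|\hat a_h|^2-|\hat a_0|^2=(|\hat a_h|-|\hat a_0|)(|\hat a_h|+|\hat a_0|)$, Cauchy--Schwarz and the uniform bound on $g_h$ (Remark~\ref{rem_gh}) let me replace $\overline m_{h,3}$ by $\overline m_{0,3}$ up to an $o_h(1)$ error; then $g_h\to 1$ a.e. with $g_h$ bounded, dominated convergence and Plancherel give $\int_{\mathbb R^2}|\mathcal F(\overline m_{0,3}\mathds 1_\omega)|^2 g_h\to\|\overline m_{0,3}\|_{L^2(\omega)}^2$. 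Comparing with the display above forces $\|\overline m_{0,3}\|_{L^2(\omega)}=0$, and since $\widetilde m_0$ is independent of $x_3$ this gives $\widetilde m_{0,3}\equiv 0$ on $\Omega_1$, completing the proof. The main obstacle is precisely this limit: one must pass to the limit in the stray-field energy written in Fourier variables with the $h$-dependent multiplier $g_h$ and separate cleanly its bulk contribution $\|\overline m_{0,3}\|_{L^2(\omega)}^2$ from the boundary contribution carried by $I(h)$, relying on the sharp asymptotics of Theorems~\ref{limI} and~\ref{devstrayfield} and on strong $L^2$ compactness; the first step, by contrast, is routine.
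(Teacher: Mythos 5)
Your proposal is correct and follows essentially the same route as the paper: coercivity yields a uniform bound on $\widetilde{E}_h^{(0)}$, hence an $H^1(\Omega_1)$ bound, weak compactness, and $\partial_3\widetilde{m}_0=0$; then the stray-field bound combined with Theorems~\ref{devstrayfield} and~\ref{limI} forces the Fourier term with multiplier $g_h$ to vanish, and Plancherel gives $\widetilde{m}_{0,3}\equiv 0$. The only (harmless) difference is in the final limit passage, where you use strong $L^2$ convergence of the Fourier transforms together with a Cauchy--Schwarz splitting, while the paper invokes a.e. convergence of $g_h$ and of $\mathcal{F}(\overline{\widetilde{m}}_{h,3})$ with dominated convergence.
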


\begin{proof}
By Proposition~\ref{GC-coercivity} and since we assumed that $\widetilde{E}_h(\widetilde{m}_h)$ is bounded, there exist $h_0>0$   \linebreak and $C_0>0$ such that, for every $h \in (0,h_0)$, $\widetilde{E}_h^{(0)}(\widetilde{m}_h) \leqslant C_0$. In particular, for every $h \in (0,h_0)$, we have
\begin{equation*}
\int_{\Omega_1} \left\vert \nabla' \widetilde{m}_h \right\vert^2 \mathrm{d} x
\leqslant \frac{h \left\vert \log h \right\vert}{d^2} C_0,
\end{equation*}
and
\begin{equation}
\label{eq-thm-compactness}
\int_{\Omega_1} \left\vert \partial_3 \widetilde{m}_h \right\vert^2 \mathrm{d} x
\leqslant h^2 \cdot \frac{h \left\vert \log h \right\vert}{d^2} C_0.
\end{equation}
In the regime~\eqref{regime1.1}+\eqref{regime1.2}, $\lim\limits_{h \rightarrow 0} \frac{h \left\vert \log h \right\vert}{d^2}=\alpha$ and $\lim\limits_{h \rightarrow 0} h^2 \cdot \frac{h \left\vert \log h \right\vert}{d^2} = 0$. Hence, $(\nabla'\widetilde{m}_h)_{h>0}$ is bounded in $L^2$. Moreover, $\left\vert \widetilde{m}_h \right\vert=1$ for every $h>0$, thus the sequence $(\widetilde{m}_h)_{h>0}$ is bounded in $H^1$. By the Banach-Alaoglu theorem~(\cite[Theorem~3.15]{Rudin}), for a subsequence, $(\widetilde{m}_{h})_{h>0}$ converges weakly to $\widetilde{m}_0$ in $H^1$, for some $\widetilde{m}_0 \in H^1(\Omega_1,\mathbb{R}^3)$.

It remains to show the stated properties of $\widetilde{m}_0$. By the Rellich-Kondrachov compactness theorem~(\cite[Section~5.7]{Evans}), up to take a further subsequence, we can assume that $(\widetilde{m}_{h})_{h>0}$ converges strongly to $\widetilde{m}_0$ in $L^2$ and almost everywhere in$\Omega_1$. In particular, $\left\vert \widetilde{m}_0 \right\vert = 1$ almost everywhere in~$\Omega_1$. By~\eqref{eq-thm-compactness}, $(\partial_3 \widetilde{m}_{h})_{h>0}$ tends to zero in $L^2$, but we also know that $(\partial_3\widetilde{m}_{h})_{h>0}$ tends to $\partial_3\widetilde{m}_0$ weakly in $L^2$. By uniqueness of the weak limit, we have $\partial_3\widetilde{m}_0 \equiv 0$ in $L^2$. It follows that $\widetilde{m}_0$ is independent of $x_3$. By Proposition~\ref{GC-coercivity} and since we assumed that $\widetilde{E}_h(\widetilde{m}_h)$ is bounded, there \linebreak exist $h_1>0$ and $C_1>0$ such that, for every $h \in (0,h_1)$,
\begin{equation*}
\frac{1}{h^2 \left\vert \log h \right\vert} \int_{\mathbb{R}^2} \left\vert \nabla u_h \right\vert^2 \mathrm{d} x \leqslant C_1.
\end{equation*}
Using Remark~\ref{rem_gh} and Theorem~\ref{devstrayfield}, it follows that, for every $h \in (0,h_1)$,
\begin{equation*}
0 \leqslant
\int_{\mathbb{R}^2} \vert \mathcal{F}(\overline{m}_h \cdot e_3)(\xi') \vert^2 g_h(\vert \xi' \vert ) \ \mathrm{d} \xi'
\leqslant \left( C_1 - \frac{I(h)}{4\pi h^2 \left\vert \log h \right\vert} \right) h \left\vert \log h \right\vert.
\end{equation*}
By~\eqref{rel_mhtilde_mh}, we have $\overline{\widetilde{m}}_h=\overline{m}_h$. By weak convergence of $(\widetilde{m}_h)_{h>0}$ to $\widetilde{m}_0$ in $H^1$, by Fubini's theorem, and since $\widetilde{m}_0$ is independent of $x_3$, then $(\overline{m}_h)_{h>0}$ converges weakly to $\widetilde{m}_0$ in $H^1$. Hence, we can use Theorem~\ref{limI} and we get
\begin{equation*}
\lim\limits_{h \rightarrow 0} \left( C_1-\frac{I(h)}{4\pi h^2 \left\vert \log h \right\vert} \right)
= C_1-\frac{1}{2\pi} \int_{\partial\omega} (\widetilde{m}_0 \cdot \nu)^2 \mathrm{d} \mathcal{H}^1 < +\infty,
\end{equation*}
since $\left\vert \widetilde{m}_0 \right\vert = 1$ almost everywhere, where $\nu$ is the outer unit normal vector on $\partial\omega$. We deduce that
\begin{equation*}
\lim\limits_{h \rightarrow 0} \int_{\mathbb{R}^2} \vert \mathcal{F}(\overline{m}_h \cdot e_3)(\xi') \vert^2 g_h(\vert \xi' \vert ) \ \mathrm{d} \xi' = 0.
\end{equation*}
Since $m_h \equiv 0$ in $\mathbb{R}^3 \setminus \Omega_1$, then for every $h>0$,
\begin{equation*}
\int_{\mathbb{R}^2} \vert \mathcal{F}(\overline{m}_h \cdot e_3)(\xi') \vert^2 g_h(\vert \xi' \vert ) \ \mathrm{d} \xi'
=
\int_{\mathbb{R}^2} \vert \mathcal{F}(\overline{\widetilde{m}}_{h,3} \mathds{1}_\omega)(\xi') \vert^2 g_h(\vert \xi' \vert ) \ \mathrm{d} \xi'.
\end{equation*}
Since $(g_h)_{h>0}$ converges almost everywhere to $1$ in $\mathbb{R}^2$ and $g_h(\vert \xi' \vert)$ is bounded for almost \linebreak every $\xi' \in \mathbb{R}^2$ (see Remark~\ref{rem_gh}), and $(\mathcal{F}(\overline{\widetilde{m}}_{h,3}))_{h>0}$ is bounded and converges almost everywhere to $\mathcal{F}(\overline{\widetilde{m}}_{0,3})$ in $\omega$, we deduce from the dominated convergence theorem and the two above relations that
\begin{equation*}
\int_{\mathbb{R}^2} \vert \mathcal{F}(\overline{\widetilde{m}}_{0,3} \mathds{1}_\omega)(\xi') \vert^2 \mathrm{d} \xi'
= 0.
\end{equation*}
By Plancherel's formula, we get
\begin{equation*}
\int_\omega \vert \overline{\widetilde{m}}_{0,3}(x') \vert^2 \mathrm{d} x'
= \int_{\mathbb{R}^2} \vert \mathcal{F}(\overline{\widetilde{m}}_{0,3} \mathds{1}_\omega)(\xi') \vert^2 \mathrm{d} \xi'
= 0.
\end{equation*}
We deduce that $\overline{\widetilde{m}}_{0,3} \equiv 0$ in $\omega$, but since $\widetilde{m}_0$ is independent of $x_3$, we firstly have $\widetilde{m}_{0,3}=\overline{\widetilde{m}}_{0,3} \equiv 0$ in $\omega$, and secondly $\widetilde{m}_{0,3} \equiv 0$ in $\Omega_1$.
\end{proof}

\begin{theoreme}[Lower bound]
\label{GC-lower-bound}
Consider the regime~\eqref{regime1.1}$+$\eqref{regime1.2}. Consider a sequence $(\widetilde{m}_h)_{h>0}$ in $H^1(\Omega_1,\mathbb{S}^2)$ and $\widetilde{m}_0 \in H^1(\Omega_1,\mathbb{S}^2)$ such that $\widetilde{m}_0$ is independent of $x_3$, $\widetilde{m}_{0,3} \equiv 0$ and $(\widetilde{m}_h)_{h>0}$ converges weakly to $\widetilde{m}_0$ in $H^1$. Then
\begin{equation}
\label{liminfEh}
\liminf\limits_{h \rightarrow 0} \widetilde{E}_h(\widetilde{m}_h) \geqslant \widetilde{E}_0(\widetilde{m}_0),
\end{equation}
where $\widetilde{E}_0$ is given by~\eqref{exprE0}.
\end{theoreme}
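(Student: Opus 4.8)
The strategy is to first reduce to a subsequence along which the energy is bounded, invoke the coercivity estimate of Proposition~\ref{GC-coercivity} to obtain the relevant $L^2$-bounds, and then pass to the limit term by term in the decomposition~\eqref{exprEh}. If $\liminf_{h\to 0}\widetilde{E}_h(\widetilde{m}_h)=+\infty$ there is nothing to prove, so I may pass to a subsequence (not relabeled) along which $\widetilde{E}_h(\widetilde{m}_h)\to\ell:=\liminf_{h\to 0}\widetilde{E}_h(\widetilde{m}_h)<+\infty$. By Proposition~\ref{GC-coercivity} this subsequence is bounded and $\widetilde{E}_h^{(0)}(\widetilde{m}_h)\leqslant C$, so that $\|\nabla'\widetilde{m}_h\|_{L^2(\Omega_1)}$, $\tfrac{1}{h^2}\|\partial_3\widetilde{m}_h\|_{L^2(\Omega_1)}^2$ and $\tfrac{1}{h^2|\log h|}\int_{\mathbb{R}^3}|\nabla u_h|^2$ stay bounded as $h\to 0$. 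Extracting a further subsequence via Rellich--Kondrachov, I may also assume $\widetilde{m}_h\to\widetilde{m}_0$ strongly in $L^2(\Omega_1)$ and a.e.\ in $\Omega_1$; arguing with Fubini exactly as in the proof of Theorem~\ref{GC-compactness}, the $x_3$-averages $\overline{m}_h$ then converge weakly to $\widetilde{m}_0$ in $H^1(\omega)$.

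Next I pass to the limit term by term. The nonnegative term $\tfrac{d^2}{h^3|\log h|}\int_{\Omega_1}|\partial_3\widetilde{m}_h|^2$ is discarded. For the in-plane exchange term, weak $L^2$ lower semicontinuity of the gradient combined with $\tfrac{d^2}{h|\log h|}\to\alpha$ yields $\liminf_{h\to 0}\tfrac{d^2}{h|\log h|}\int_{\Omega_1}|\nabla'\widetilde{m}_h|^2\geqslant\alpha\int_{\Omega_1}|\nabla'\widetilde{m}_0|^2$. In the Dzyaloshinskii--Moriya term $\tfrac{1}{h|\log h|}\int_{\Omega_1}\widehat{D}':\nabla'\widetilde{m}_h\wedge\widetilde{m}_h$ I isolate the $e_3$-components as in~\eqref{devD13}: the contributions of the $\widehat{D}_{jk}$ with $j,k\in\{1,2\}$ are $o_h(1)$ by Lemma~\ref{lemme-bound-dmi-1-2} and the assumption $\tfrac{1}{h|\log h|}\sum_{j,k=1}^2|\widehat{D}_{jk}|\ll 1$, whereas $\tfrac{\widehat{D}_{j3}}{h|\log h|}=\tfrac{\widehat{D}_{j3}}{d^2}\cdot\tfrac{d^2}{h|\log h|}\to 2\alpha\delta_j$ and $\int_{\Omega_1}\partial_j\widetilde{m}_h'\wedge\widetilde{m}_h'\to\int_{\Omega_1}\partial_j\widetilde{m}_0'\wedge\widetilde{m}_0'$ (a weakly $L^2$-convergent sequence tested against a strongly $L^2$-convergent one), so this term converges to $2\alpha\int_{\Omega_1}\delta\cdot\nabla'\widetilde{m}_0'\wedge\widetilde{m}_0'$. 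The term $\tfrac{1}{h^2|\log h|}\int_{\Omega_1}\widehat{D}_3\cdot\partial_3\widetilde{m}_h\wedge\widetilde{m}_h$ tends to $0$: by Lemma~\ref{lemme-upper-bound-dmi-3} it is bounded in absolute value by $\tfrac{1}{h|\log h|}\big(|\widehat{D}_{31}|+|\widehat{D}_{32}|+\tfrac{1}{2}|\widehat{D}_{33}|\big)\int_{\Omega_1}\big(1+\tfrac{1}{h^2}|\partial_3\widetilde{m}_h|^2\big)$, and $\tfrac{1}{h|\log h|}\sum_{k=1}^3|\widehat{D}_{3k}|\ll 1$ while the last integral is bounded. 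For the anisotropy term, $\tfrac{Q}{h|\log h|}\to\beta$ and, since $\Phi$ is continuous and bounded on $\mathbb{S}^2$, dominated convergence gives $\int_{\Omega_1}\Phi(\widetilde{m}_h)\to\int_{\Omega_1}\Phi(\widetilde{m}_0)$; for the Zeeman term, $\tfrac{\widetilde{H}_{\mathrm{ext},h}}{h|\log h|}\to\gamma\widetilde{H}_{\mathrm{ext},0}$ in $L^1(\Omega_1)$, together with $|\widetilde{m}_h|=1$ and $\widetilde{m}_h\to\widetilde{m}_0$ a.e., gives $\tfrac{2}{h|\log h|}\int_{\Omega_1}\widetilde{H}_{\mathrm{ext},h}\cdot\widetilde{m}_h\to 2\gamma\int_{\Omega_1}\widetilde{H}_{\mathrm{ext},0}\cdot\widetilde{m}_0$.

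The heart of the argument is the magnetostatic term, which I handle with Theorems~\ref{devstrayfield} and~\ref{limI}. By Theorem~\ref{devstrayfield}, $\tfrac{1}{h^2|\log h|}\int_{\mathbb{R}^3}|\nabla u_h|^2$ splits into a nonnegative Fourier term $\tfrac{1}{h|\log h|}\int_{\mathbb{R}^2}|\mathcal{F}(\overline{m}_h\cdot e_3)|^2 g_h(|\xi'|)\,\mathrm{d}\xi'$, which I discard, the term $\tfrac{I(h)}{4\pi h^2|\log h|}$, and an error $\big(1+\|\mathrm{div}'\widetilde{m}_h\|_{L^2(\Omega_1)}^2+\tfrac{1}{h^2}\|\partial_3\widetilde{m}_h\|_{L^2(\Omega_1)}^2\big)\,O\big(\tfrac{1}{|\log h|}\big)$ that vanishes because $\|\mathrm{div}'\widetilde{m}_h\|_{L^2}^2\leqslant 2\|\nabla'\widetilde{m}_h\|_{L^2}^2$ and $\tfrac{1}{h^2}\|\partial_3\widetilde{m}_h\|_{L^2}^2$ are bounded (this is precisely where the coercivity bounds on the bounded-energy subsequence are used). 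Since $\overline{m}_h\rightharpoonup\widetilde{m}_0$ in $H^1(\omega)$, Theorem~\ref{limI} yields $\tfrac{I(h)}{4\pi h^2|\log h|}\to\tfrac{1}{2\pi}\int_{\partial\omega}(\widetilde{m}_0\cdot\nu)^2\,\mathrm{d}\mathcal{H}^1$.

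Collecting all contributions and using that the liminf of a sum is at least the sum of the liminfs --- the $\widehat{D}_{j3}$, anisotropy, Zeeman and $I(h)$ terms contribute their genuine limits, the remaining Dzyaloshinskii--Moriya terms and the error vanish, and the $\partial_3$-exchange and the Fourier terms are nonnegative --- I obtain $\ell\geqslant\alpha\int_{\Omega_1}|\nabla'\widetilde{m}_0|^2+2\alpha\int_{\Omega_1}\delta\cdot\nabla'\widetilde{m}_0'\wedge\widetilde{m}_0'+\tfrac{1}{2\pi}\int_{\partial\omega}(\widetilde{m}_0\cdot\nu)^2\,\mathrm{d}\mathcal{H}^1+\beta\int_{\Omega_1}\Phi(\widetilde{m}_0)-2\gamma\int_{\Omega_1}\widetilde{H}_{\mathrm{ext},0}\cdot\widetilde{m}_0=\widetilde{E}_0(\widetilde{m}_0)$, which is~\eqref{liminfEh}. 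I expect the main obstacle to be the bookkeeping around the magnetostatic term --- checking that the error in Theorem~\ref{devstrayfield} is genuinely negligible, and correctly passing from weak $H^1(\Omega_1)$-convergence of $\widetilde{m}_h$ to weak $H^1(\omega)$-convergence of the averages $\overline{m}_h$ so that Theorem~\ref{limI} applies; the rest reduces to weak lower semicontinuity and weak--strong convergence.
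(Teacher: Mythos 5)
Your proposal is correct and follows essentially the same route as the paper: the same term-by-term passage to the limit using weak lower semicontinuity for the exchange term, weak--strong convergence for the in-plane Dzyaloshinskii--Moriya term, Lemmas~\ref{lemme-bound-dmi-1-2} and~\ref{lemme-upper-bound-dmi-3} with the regime assumptions for the remaining DMI contributions, Theorems~\ref{devstrayfield} and~\ref{limI} for the magnetostatic term, and dominated convergence plus the $L^1$-convergence of the rescaled external field for the anisotropy and Zeeman terms. Your explicit reduction to a subsequence realizing the finite liminf and the appeal to Proposition~\ref{GC-coercivity} to justify the boundedness of $\tfrac{1}{h^2}\|\partial_3\widetilde{m}_h\|_{L^2}^2$ (hence the vanishing of the error in Theorem~\ref{devstrayfield} and of the $\widehat{D}_3$ term) is a slightly more careful bookkeeping of a point the paper treats tersely, but it is not a different argument.
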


\begin{proof}
We denote by $\nu$ the outer unit normal vector on $\partial \Omega_1$.
\\
If $\liminf\limits_{h \rightarrow 0} \widetilde{E}_h(\widetilde{m}_h)=+\infty$, then the expected inequality is obvious. Assume that there exists a constant $C>0$ such that $\liminf\limits_{h \rightarrow 0} \widetilde{E}_h(\widetilde{m}_h) \leqslant C$. Let $h>0$. By~\eqref{exprEh}, we clearly have
\begin{equation}
\label{equa-lower-bound-0}
\begin{split}
\widetilde{E}_h(\widetilde{m}_h)
& \geqslant \frac{d^2}{h \left\vert \log h \right\vert} \int_{\Omega_1} \vert \nabla'\widetilde{m}_h \vert^2 \mathrm{d} x
\\ & \quad
+ \frac{1}{h \left\vert \log h \right\vert} \int_{\Omega_1} \widehat{D}' : \nabla' \widetilde{m}_h \wedge \widetilde{m}_h \ \mathrm{d} x
\\ & \quad
+ \frac{1}{h^2 \left\vert \log h \right\vert} \int_{\Omega_1} \widehat{D}_3 \cdot \partial_3 \widetilde{m}_h \wedge \widetilde{m}_h \ \mathrm{d} x
\\ & \quad
+ \frac{1}{h^2 \left\vert \log h \right\vert} \int_{\mathbb{R}^3} \left\vert \nabla u_h \right\vert^2 \mathrm{d} x
\\ & \quad + \frac{Q}{h \left\vert \log h \right\vert} \int_{\Omega_1} \Phi(\widetilde{m}_h) \ \mathrm{d} x
- 2 \int_{\Omega_1} \frac{\widetilde{H}_{\mathrm{ext},h}}{h \left\vert \log h \right\vert} \cdot \widetilde{m}_h \ \mathrm{d} x.
\end{split}
\end{equation}
Let us examine each term of this inequality in order to prove~\eqref{liminfEh}.
Since $\frac{d^2}{h \left\vert \log h \right\vert} \rightarrow \alpha$ and $(\nabla \widetilde{m}_h)_{h>0}$ converges weakly to $\nabla \widetilde{m}_0$ in $L^2$, then by weak lower semicontinuity of the Dirichlet integral, we have
\begin{equation}
\label{equa-lower-bound-1}
\liminf\limits_{h \rightarrow 0}
\frac{d^2}{h \left\vert \log h \right\vert}
\int_{\Omega_1} \left\vert \nabla' \widetilde{m}_h \right\vert^2 \mathrm{d} x
\geqslant \alpha \int_{\Omega_1} \left\vert \nabla' \widetilde{m}_0 \right\vert^2 \mathrm{d} x.
\end{equation}
Recall that
\begin{align*}
\frac{1}{h \left\vert \log h \right\vert} \int_{\Omega_1} \widehat{D}' : \nabla' \widetilde{m}_h \wedge \widetilde{m}_h \ \mathrm{d} x
& = \frac{1}{h \left\vert \log h \right\vert} \sum_{j=1}^2 \sum_{k=1}^3 \int_{\Omega_1} \widehat{D}_{jk} e_k \cdot \partial_j \widetilde{m}_h \wedge \widetilde{m}_h \ \mathrm{d} x
\\
& = \sum_{j,k=1}^2 \frac{\widehat{D}_{jk}}{h \left\vert \log h \right\vert} \ e_k \cdot \int_{\Omega_1} \partial_j \widetilde{m}_h \wedge \widetilde{m}_h \ \mathrm{d} x
\\
& \quad
+ \frac{d^2}{h \left\vert \log h \right\vert} \frac{\widehat{D}_{13}}{d^2} \ e_3 \cdot \int_{\Omega_1} \partial_1 \widetilde{m}_h \wedge \widetilde{m}_h \ \mathrm{d} x
\\
& \quad
+ \frac{d^2}{h \left\vert \log h \right\vert} \frac{\widehat{D}_{23}}{d^2} \ e_3 \cdot \int_{\Omega_1} \partial_2 \widetilde{m}_h \wedge \widetilde{m}_h \ \mathrm{d} x.
\end{align*}
Since $(\widetilde{m}_h)_{h>0}$ converges weakly to $\widetilde{m}_0$ in $H^1$, then up to a subsequence (thanks to the Rellich-Kondrachov compactness theorem~\cite[Section~5.7]{Evans}), we can assume that $(\widetilde{m}_h)_{h>0}$ converges strongly to $\widetilde{m}_0$ in $L^2$. As a consequence, we deduce that, for $j \in \left\lbrace 1,2 \right\rbrace$,
\begin{equation*}
\lim\limits_{h \rightarrow 0} \int_{\Omega_1} \partial_j \widetilde{m}_h \wedge \widetilde{m}_h \ \mathrm{d} x = \int_{\Omega_1} \partial_j \widetilde{m}_0 \wedge \widetilde{m}_0 \ \mathrm{d} x.
\end{equation*}
Combining this with the assumptions~\eqref{regime1.1}$+$\eqref{regime1.2}, we deduce that
\begin{equation}
\label{equa-lower-bound-2}
\lim\limits_{h \rightarrow 0}
\frac{1}{h \left\vert \log h \right\vert} \int_{\Omega_1} \widehat{D}' : \nabla' \widetilde{m}_h \wedge \widetilde{m}_h \ \mathrm{d} x
= 2 \alpha \int_{\Omega_1} \delta \cdot \nabla' \widetilde{m}'_0 \wedge \widetilde{m}'_0 \ \mathrm{d} x.
\end{equation}
Similarly, as $\left( \frac{1}{h}\partial_3 \widetilde{m}_h \right)_{h>0}$ converges weakly to some $M$ in $L^2$,
\begin{equation*}
\lim\limits_{h \rightarrow 0} \frac{1}{h} \int_{\Omega_1} \partial_3 \widetilde{m}_h \wedge \widetilde{m}_h \ \mathrm{d} x = \int_{\Omega_1} M \wedge \widetilde{m}_0 \ \mathrm{d} x.
\end{equation*}
But $\frac{1}{h \left\vert \log h \right\vert} \sum_{k=1}^3 \vert \widehat{D}_{3k} \vert \ll 1$, hence
\begin{equation}
\label{equa-lower-bound-3}
\lim\limits_{h \rightarrow 0}
\frac{1}{h^2 \left\vert \log h \right\vert} \int_{\Omega_1} \widehat{D}_3 \cdot \partial_3 \widetilde{m}_h \wedge \widetilde{m}_h \ \mathrm{d} x
= 0.
\end{equation}
By Theorem~\ref{devstrayfield} and since $g_h \geqslant 0$ (see Remark~\ref{rem_gh}),
\begin{equation*}
\frac{1}{h^2 \left\vert \log h \right\vert} \int_{\mathbb{R}^3} \left\vert \nabla u_h \right\vert^2 \mathrm{d} x
\geqslant \frac{I(h)}{4\pi h^2 \left\vert \log h \right\vert} +o_h(1).
\end{equation*}
By~\eqref{rel_mhtilde_mh}, we have $\overline{\widetilde{m}}_h=\overline{m}_h$. By weak convergence of $(\widetilde{m}_h)_{h>0}$ to $\widetilde{m}_0$ in $H^1$, by Fubini's theorem, and since $\widetilde{m}_0$ is independent of $x_3$, then $(\overline{m}_h)_{h>0}$ converges weakly to $\widetilde{m}_0$ in $H^1$. Hence, we can use Theorem~\ref{limI}, from which it follows
\begin{equation}
\label{equa-lower-bound-4}
\begin{split}
\lim\limits_{h \rightarrow 0} \frac{1}{h^2 \left\vert \log h \right\vert} \int_{\mathbb{R}^3} \left\vert \nabla u_h \right\vert^2 \mathrm{d} x
& \geqslant \frac{1}{2\pi} \int_{\partial \omega} ( \widetilde{m}_0 \cdot \nu )^2 \mathrm{d} \mathcal{H}^1.
\end{split}
\end{equation}
Since $(\widetilde{m}_h)_{h>0}$ converges (up to a subsequence) almost everywhere to $\widetilde{m}_0$ in $\Omega_1$ and $\Phi$ is continuous in $\mathbb{S}^2$, then $(\Phi(\widetilde{m}_h))_{h>0}$ converges almost everywhere to $\Phi(\widetilde{m}_0)$ and is bounded (because $\mathbb{S}^2$ is compact). Thus, by the dominated convergence theorem and since $\frac{Q}{h \left\vert \log h \right\vert} \rightarrow \beta$,
\begin{equation}
\label{equa-lower-bound-5}
\lim\limits_{h \rightarrow 0} \frac{Q}{h \left\vert \log h \right\vert} \int_{\Omega_1} \Phi(\widetilde{m}_h) \ \mathrm{d} x
= \beta \int_{\Omega_1} \Phi(\widetilde{m}_0) \ \mathrm{d} x.
\end{equation}
Finally, we have
\begin{equation*}
\begin{split}
\left\vert \int_{\Omega_1} \left( \frac{\widetilde{H}_{\mathrm{ext},h}}{h \left\vert \log h \right\vert} \cdot \widetilde{m}_h - \gamma \widetilde{H}_{\mathrm{ext},0} \cdot \widetilde{m}_0 \right) \mathrm{d} x \right\vert
& \leqslant
\left\vert \int_{\Omega_1} \left( \frac{\widetilde{H}_{\mathrm{ext},h}}{h \left\vert \log h \right\vert} - \gamma \widetilde{H}_{\mathrm{ext},0} \right) \cdot \widetilde{m}_h \ \mathrm{d} x \right\vert
\\
& \quad +
\left\vert \int_{\Omega_1} \gamma \widetilde{H}_{\mathrm{ext},0} \cdot \left( \widetilde{m}_h - \widetilde{m}_0 \right) \mathrm{d} x \right\vert.
\end{split}
\end{equation*}
The first term in the right-hand side above tends to zero by H\"older's inequality, since $\left( \frac{\widetilde{H}_{\mathrm{ext},h}}{h \left\vert \log h \right\vert} \right)_{h>0}$ converges to $\gamma \widetilde{H}_{\mathrm{ext},0}$ in $L^1$ and $\left\Vert \widetilde{m}_h \right\Vert_{L^\infty} = 1$. The second term also tends to zero, by dominated convergence theorem: indeed, up to a subsequence, $\widetilde{m}_h \rightarrow \widetilde{m}_0$ almost everywhere in $\Omega_1$, \linebreak and $\vert \gamma \widetilde{H}_{\mathrm{ext},0} \cdot \left( \widetilde{m}_h-\widetilde{m}_0 \right) \vert \leqslant C \Vert \widetilde{H}_{\mathrm{ext},0} \Vert_{L^\infty} \leqslant C$. We deduce that
\begin{equation}
\label{equa-lower-bound-6}
\begin{split}
\lim\limits_{h \rightarrow 0} \int_{\Omega_1} \frac{\widetilde{H}_{\mathrm{ext},h}}{h \left\vert \log h \right\vert} \cdot \widetilde{m}_h \ \mathrm{d} x
& = \gamma \int_{\Omega_1} \widetilde{H}_{\mathrm{ext},0} \cdot \widetilde{m}_0 \ \mathrm{d} x.
\end{split}
\end{equation}
Taking the $\liminf$ in~\eqref{equa-lower-bound-0} and using~\eqref{equa-lower-bound-1}, \eqref{equa-lower-bound-2}, \eqref{equa-lower-bound-3}, \eqref{equa-lower-bound-4}, \eqref{equa-lower-bound-5} and~\eqref{equa-lower-bound-6}, we get~\eqref{liminfEh} as expected.
\end{proof}

\begin{theoreme}[Upper bound]
\label{GC-upper-bound}
Consider the regime~\eqref{regime1.1}$+$\eqref{regime1.2}. Consider a map $\widetilde{m}_0 \in H^1(\Omega_1,\mathbb{S}^1)$ such that $\widetilde{m}_0$ is independent of $x_3$ and $\widetilde{m}_{0,3} \equiv 0$. Then there exists a sequence $(\widetilde{m}_h)_{h>0}$ \linebreak in $H^1(\Omega_1,\mathbb{S}^1)$ such that $(\widetilde{m}_h)_{h>0}$ converges strongly to $\widetilde{m}_0$ in $H^1$ and satisfies
\begin{equation}
\label{limsupEh}
\lim\limits_{h \rightarrow 0} \widetilde{E}_h(\widetilde{m}_h)=\widetilde{E}_0(\widetilde{m}_0),
\end{equation}
where $\widetilde{E}_0$ is given by~\eqref{exprE0}.
\end{theoreme}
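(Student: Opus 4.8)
The plan is to take the constant recovery sequence $\widetilde{m}_h := \widetilde{m}_0$ for every $h>0$. Since $\widetilde{m}_0 \in H^1(\Omega_1,\mathbb{S}^1)$ is independent of $x_3$, this sequence trivially converges strongly to $\widetilde{m}_0$ in $H^1(\Omega_1,\mathbb{S}^1)$, so the whole task reduces to showing that each of the six terms of $\widetilde{E}_h(\widetilde{m}_0)$ in~\eqref{exprEh} converges as $h\to 0$ and that the sum of the limits equals $\widetilde{E}_0(\widetilde{m}_0)$ from~\eqref{exprE0}. Since all six limits will exist, $\widetilde{E}_h(\widetilde{m}_0)$ converges to their sum, which yields~\eqref{limsupEh} directly.

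For the exchange term, $\partial_3\widetilde{m}_0\equiv 0$ gives $\frac{d^2}{h|\log h|}\int_{\Omega_1}(|\nabla'\widetilde{m}_0|^2+h^{-2}|\partial_3\widetilde{m}_0|^2)\,\mathrm{d}x=\frac{d^2}{h|\log h|}\int_{\Omega_1}|\nabla'\widetilde{m}_0|^2\,\mathrm{d}x\to\alpha\int_{\Omega_1}|\nabla'\widetilde{m}_0|^2\,\mathrm{d}x$ by~\eqref{regime1.1}, and the $\widehat{D}_3$-term vanishes identically. For the in-plane DMI term I would expand $\widehat{D}':\nabla'\widetilde{m}_0\wedge\widetilde{m}_0$ in the standard basis, exactly as in the proof of Theorem~\ref{GC-lower-bound}: since $\widetilde{m}_{0,3}\equiv 0$, the $e_1$- and $e_2$-components $e_k\cdot\partial_j\widetilde{m}_0\wedge\widetilde{m}_0$ ($k=1,2$) vanish pointwise, whereas the $e_3$-component equals $\partial_j\widetilde{m}_0'\wedge\widetilde{m}_0'$; using $\widehat{D}_{j3}/(h|\log h|)=(\widehat{D}_{j3}/d^2)(d^2/(h|\log h|))\to 2\delta_j\alpha$ for $j=1,2$, this term tends to $2\alpha\int_{\Omega_1}\delta\cdot\nabla'\widetilde{m}_0'\wedge\widetilde{m}_0'\,\mathrm{d}x$. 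For the stray-field term, let $m_h$ be the map on $\Omega_h$ associated with $\widetilde{m}_h=\widetilde{m}_0$ through~\eqref{rel_mhtilde_mh}; it is independent of $x_3$, so $\overline{m}_h=\widetilde{m}_0$. Then Theorem~\ref{devstrayfield} applies: its first term vanishes because $\overline{m}_h\cdot e_3=\widetilde{m}_{0,3}\equiv 0$; its error term is $(1+\|\mathrm{div}'(\widetilde{m}_0)\|_{L^2(\Omega_1)}^2)\,O(1/|\log h|)\to 0$ since $\widetilde{m}_0$ is fixed in $H^1(\Omega_1)$; and $I(h)/(4\pi h^2|\log h|)\to\frac{1}{2\pi}\int_{\partial\omega}(\widetilde{m}_0\cdot\nu)^2\,\mathrm{d}\mathcal{H}^1$ by Theorem~\ref{limI}, since $\overline{m}_h=\widetilde{m}_0$ converges (trivially, weakly) to $\widetilde{m}_0$ in $H^1(\omega)$. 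Finally, the anisotropy term tends to $\beta\int_{\Omega_1}\Phi(\widetilde{m}_0)\,\mathrm{d}x$ by continuity of $\Phi$ on $\mathbb{S}^2$ and $Q/(h|\log h|)\to\beta$, and the Zeeman term tends to $-2\gamma\int_{\Omega_1}\widetilde{H}_{\mathrm{ext},0}\cdot\widetilde{m}_0\,\mathrm{d}x$ by the $L^1$-convergence~\eqref{regime1.2} together with $\widetilde{m}_0\in L^\infty$. Summing the six limits reproduces $\widetilde{E}_0(\widetilde{m}_0)$.

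I do not expect a genuine obstacle in this direction: the difficult analysis -- the sharp stray-field expansion of Theorem~\ref{devstrayfield} and the logarithmic asymptotics of $I(h)$ in Theorem~\ref{limI} (due to Kohn--Slastikov and Carbou) -- is already available, and for the constant sequence there is no competition to resolve. The only step requiring a little care is the DMI bookkeeping, namely verifying that the $\widehat{D}_{j1},\widehat{D}_{j2}$ contributions drop out because $\widetilde{m}_{0,3}\equiv 0$ and that the surviving $\widehat{D}_{13},\widehat{D}_{23}$ contributions carry the correct constants $2\alpha\delta_1,2\alpha\delta_2$, so that the DMI limit matches the corresponding term of~\eqref{exprE0}.
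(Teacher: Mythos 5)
Your proposal is correct and follows essentially the same route as the paper: the constant recovery sequence $\widetilde{m}_h=\widetilde{m}_0$, the stray-field limit via Theorem~\ref{devstrayfield} and Theorem~\ref{limI}, and the same DMI bookkeeping in which only the $\widehat{D}_{13},\widehat{D}_{23}$ components survive (with limits $2\alpha\delta_1,2\alpha\delta_2$) because $\widetilde{m}_{0,3}\equiv 0$ and $\partial_3\widetilde{m}_0\equiv 0$. The treatment of the exchange, anisotropy and Zeeman terms also matches the paper's argument.
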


\begin{proof}
We denote by $\nu$ the outer unit normal vector on $\partial \Omega_1$.
\\
We consider the constant sequence $(\widetilde{m}_h)_{h>0}=(\widetilde{m}_0)_{h>0}$. By Theorem~\ref{devstrayfield} and Theorem~\ref{limI}, we have in this case
\begin{equation*}
\lim\limits_{h \rightarrow 0} \frac{1}{h^2 \left\vert \log h \right\vert} \int_{\mathbb{R}^3} \left\vert \nabla u_h \right\vert^2 \mathrm{d} x
= \frac{1}{2\pi} \int_{\partial \omega} ( \overline{\widetilde{m}}_0 \cdot \nu )^2 \ \mathrm{d} \mathcal{H}^1
= \frac{1}{2\pi} \int_{\partial \omega} ( \widetilde{m}_0 \cdot \nu )^2 \ \mathrm{d} \mathcal{H}^1,
\end{equation*}
since $\widetilde{m}_0$ is independent of $x_3$ and $\widetilde{m}_{0,3} \equiv 0$. Using the decomposition~\eqref{decomposition-coercivity}, we have, in the regime~\eqref{regime1.1}+\eqref{regime1.2},
\begin{equation}
\label{eq-upper-bound-1}
\lim\limits_{h \rightarrow 0} \widetilde{E}_h^{(0)}(\widetilde{m}_0)
= \alpha \int_{\Omega_1} \left\vert \nabla' \widetilde{m}_0 \right\vert^2 \mathrm{d} x
+ \frac{1}{2\pi} \int_{\partial \omega} \left( \widetilde{m}_0 \cdot \nu \right)^2 \mathrm{d} \mathcal{H}^1
+ \beta \int_{\Omega_1} \Phi(\widetilde{m}_0) \ \mathrm{d} x,
\end{equation}
by the above convergence result and using again that $\widetilde{m}_0$ is independent of $x_3$. We also have
\begin{equation}
\label{eq-upper-bound-2}
\lim\limits_{h \rightarrow 0} \widetilde{E}_h^{(1)}(\widetilde{m}_0)
 = -2 \gamma \int_{\Omega_1} \widetilde{H}_{\mathrm{ext},0} \cdot \widetilde{m}_0 \ \mathrm{d} x,
\end{equation}
since by H\"older's inequality,
\begin{equation*}
\left\vert \int_{\Omega_1} \frac{\widetilde{H}_{\mathrm{ext},h}}{h \left\vert \log h \right\vert} \cdot \widetilde{m}_0 \ \mathrm{d} x
- \int_{\Omega_1} \gamma \widetilde{H}_{\mathrm{ext},0} \cdot \widetilde{m}_0 \ \mathrm{d} x \right\vert
\leqslant \left\Vert \frac{\widetilde{H}_{\mathrm{ext},h}}{h \left\vert \log h \right\vert} - \gamma \widetilde{H}_{\mathrm{ext},0} \right\Vert_{L^1(\Omega_1)}
\end{equation*}
and $\left( \frac{\widetilde{H}_{\mathrm{ext},h}}{h \left\vert \log h \right\vert} \right)_{h>0}$ converges to $\gamma \widetilde{H}_{\mathrm{ext},0}$ in $L^1$. Furthermore, using that $\widetilde{m}_0$ is independent of $x_3$, we get
\begin{align*}
\widetilde{E}_h^{(2)}(\widetilde{m}_0)
& = \frac{1}{h \left\vert \log h \right\vert} \int_{\Omega_1} \widehat{D}' : \nabla' \widetilde{m}_0 \wedge \widetilde{m}_0 \ \mathrm{d} x
\\
& = \frac{1}{h \left\vert \log h \right\vert} \int_{\Omega_1} \left(
\widehat{D}_1 \cdot \partial_1 \widetilde{m}_0 \wedge \widetilde{m}_0
+ \widehat{D}_2 \cdot \partial_2 \widetilde{m}_0 \wedge \widetilde{m}_0
\right) \mathrm{d} x,
\end{align*}
and since $\widetilde{m}_{0,3} \equiv 0$ (and thus $\partial_j \widetilde{m}_{0,3} \equiv 0$ for $j \in \left\lbrace 1,2 \right\rbrace$),
\begin{align*}
\widetilde{E}_h^{(2)}(\widetilde{m}_0)
& = \frac{1}{h \left\vert \log h \right\vert} \int_{\Omega_1} 
\left(
\widehat{D}_{13} e_3 \cdot \partial_1 \widetilde{m}_0 \wedge \widetilde{m}_0
+ \widehat{D}_{23} e_3 \cdot \partial_2 \widetilde{m}_0 \wedge \widetilde{m}_0
\right) \mathrm{d} x
\\
& = \frac{d^2}{h \left\vert \log h \right\vert} \frac{\widehat{D}_{13}}{d^2} \int_{\Omega_1} \partial_1 \widetilde{m}'_0 \wedge \widetilde{m}'_0 \ \mathrm{d} x
+ \frac{d^2}{h \left\vert \log h \right\vert} \frac{\widehat{D}_{23}}{d^2} \int_{\Omega_1} \partial_2 \widetilde{m}'_0 \wedge \widetilde{m}'_0 \ \mathrm{d} x.
\end{align*}
Then, in the regime~\eqref{regime1.1}+\eqref{regime1.2}, we have
\begin{equation}
\label{eq-upper-bound-3}
\lim\limits_{h \rightarrow 0} \widetilde{E}_h^{(2)}(\widetilde{m}_0)
= 2 \alpha\delta_1 \int_{\Omega_1} \partial_1 \widetilde{m}'_0 \wedge \widetilde{m}'_0 \ \mathrm{d} x
+ 2 \alpha\delta_2 \int_{\Omega_1} \partial_2 \widetilde{m}'_0 \wedge \widetilde{m}'_0 \ \mathrm{d} x.
\end{equation}
Combining~\eqref{eq-upper-bound-1}, \eqref{eq-upper-bound-2} and~\eqref{eq-upper-bound-3}, we get~\eqref{limsupEh}.
\end{proof}

Theorem~\ref{GC} is now a direct consequence from Theorems~\ref{GC-compactness}, \ref{GC-lower-bound} and \ref{GC-upper-bound}.

Corollary~\ref{GC-cor} is a consequence from Proposition \ref{GC-coercivity} and the direct method in the calculus of variations on the one hand, and from Theorem \ref{GC} and properties of Gamma-limits (see \cite[Proposition~7.8]{DalMaso}) on the other hand.

\section{On the local minimizers of the Gamma-limit of the micromagnetic energy in the upper-half plane.}
\label{section2}

This section is devoted to look for local minimizers of the Gamma-limit $\widetilde{E}_0$ given in~\eqref{exprE0-2D}. We assume here that the anisotropy $\Phi$ and the external magnetic field $\widetilde{H}_{\mathrm{ext},0}$ are equal to zero.

\noindent
\textbf{Moreover, since all quantities in this section are two-dimensional quantities, we drop the primes $'$ in the notations.}

The energy that we will study in this section, resulting from~\eqref{exprE0-2D}, is
\begin{equation*}
\widetilde{E}_0(m;\omega \cap \mathbb{R}_+^2)
:= \alpha \left[
\int_{\omega \cap \mathbb{R}_+^2} \left\vert \nabla m \right\vert^2 \mathrm{d} x
+ 2 \int_{\omega \cap \mathbb{R}_+^2} \delta \cdot \nabla m \wedge m \ \mathrm{d} x
\right]
+ \frac{1}{2\pi} \int_{\omega \cap (\mathbb{R} \times \left\lbrace 0 \right\rbrace)} (m \cdot \nu)^2 \ \mathrm{d} \mathcal{H}^1,
\end{equation*}
for every $m \in H^1(\omega,\mathbb{S}^1)$, where $\omega$ is a smooth bounded open subset of $\mathbb{R}^2$ and $\nu$ is the outer unit normal vector on $\partial \omega$.

\subsection{The energy in the upper-half plane and its critical points}
\label{subsection2.1}

By making a blow-up near the boundary $\partial \omega$, we are led to consider localized functionals with the integrals defined on sets of the form $\omega \cap \mathbb{R}_+^2$, where $\omega$ is a smooth bounded open subset of $\mathbb{R}^2$.
\linebreak
Let $\omega \subset \mathbb{R}^2$ be such a set. For $m \in H^1(\omega,\mathbb{S}^1)$, there exists~(see~\cite{BBM}) a lifting $\varphi \in H^1(\omega,\mathbb{R})$ of $m$, i.e. $m=e^{i\varphi}$. Note that $\nu=-e_2$, where $e_2$ is the second unit vector of the standard orthonormal basis in $\mathbb{R}^2$. Since
\begin{equation*}
\nabla m = \nabla \left( e^{i\varphi} \right) = i\nabla \varphi e^{i\varphi},
\end{equation*}
\begin{equation*}
\nabla m \wedge m = \Im \left( -i\nabla \varphi e^{-i\varphi} e^{i\varphi} \right) = -\nabla \varphi,
\end{equation*}
and
\begin{equation*}
m \cdot \nu = -m_2=-\sin \varphi,
\end{equation*}
we can introduce
\begin{equation*}
\widetilde{E}_0(m ;\omega \cap \mathbb{R}_+^2)
:= \alpha \int_{\omega \cap \mathbb{R}_+^2} \left( \left\vert \nabla \varphi \right\vert^2 -2 \delta \cdot \nabla \varphi \right) \mathrm{d} x
+ \frac{1}{2\pi} \int_{\omega \cap (\mathbb{R} \times \left\lbrace 0 \right\rbrace)} \sin^2 \varphi \ \mathrm{d} \mathcal{H}^1.
\end{equation*}
Setting $\varepsilon :=2\pi \alpha$ and $E_{\varepsilon}^\delta(\varphi ;\omega):=\frac{1}{2\alpha}\widetilde{E}_0(m ;\omega \cap \mathbb{R}_+^2)$, we get
\begin{equation}
\label{exprEeps}
E_{\varepsilon}^\delta(\varphi;\omega)
= \frac{1}{2} \int_{\omega \cap \mathbb{R}_+^2} \left( \left\vert \nabla \varphi \right\vert^2-2\delta \cdot \nabla \varphi \right) \mathrm{d} x
+ \frac{1}{2\varepsilon} \int_{\omega \cap (\mathbb{R} \times \left\lbrace 0 \right\rbrace)} \sin^2 \varphi \ \mathrm{d} \mathcal{H}^1.
\end{equation}

In the case $\delta=0$, the energy $E_{\varepsilon}^\delta=E_{\varepsilon}^0$ has been deeply studied by Kurzke~\cite{Kurzke06} and Ignat-Kurzke~\cite{IK21}.

\begin{proposition}
\label{propformulationvariationnelle}
If $\varphi \in H^1_\mathrm{loc}(\overline{\mathbb{R}_+^2})$ is a critical point of $E_{\varepsilon}^\delta$, then
\begin{equation}
\label{formulationvariationnellephir}
\int_{\mathbb{R}_+^2 \cap \mathrm{Supp}(\psi)} (\nabla \varphi-\delta) \cdot \nabla \psi \ \mathrm{d} x
+ \frac{1}{2\varepsilon} \int_{(\mathbb{R} \times \left\lbrace 0 \right\rbrace) \cap \mathrm{Supp}(\psi)} \sin(2\varphi)\psi \ \mathrm{d} \mathcal{H}^1
= 0,
\end{equation}
for every $\psi \in H^1(\mathbb{R}^2)$ with compact support.
\end{proposition}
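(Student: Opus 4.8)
The plan is to compute the first variation of $E_\varepsilon^\delta$ along an admissible direction, read off the stated weak identity from the criticality condition of Definition~\ref{def_pt_critique}, and then upgrade the class of test functions from $C^1$ (as in that definition) to $H^1$ by a density argument.

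\textbf{Step 1: first variation for $C^1$ test functions.} Fix $\psi\in C^1(\mathbb{R}^2)$ with compact support $K:=\mathrm{Supp}(\psi)$, and pick $R>0$ with $K\subset B_R$. Expanding the square in~\eqref{exprEeps},
\[
E_\varepsilon^\delta(\varphi+t\psi;K)=\tfrac12\int_{K\cap\mathbb{R}_+^2}\!\Big(|\nabla\varphi|^2+2t\,\nabla\varphi\cdot\nabla\psi+t^2|\nabla\psi|^2-2\delta\cdot(\nabla\varphi+t\nabla\psi)\Big)\,\mathrm{d}x+\tfrac1{2\varepsilon}\int_{K\cap(\mathbb{R}\times\{0\})}\!\sin^2(\varphi+t\psi)\,\mathrm{d}\mathcal{H}^1 .
\]
The bulk integral is a quadratic polynomial in $t$ --- here $\nabla\varphi\in L^2(K\cap\mathbb{R}_+^2)$ because $\varphi\in H^1_\mathrm{loc}(\overline{\mathbb{R}_+^2})$ and $K$ is bounded --- hence differentiable, with derivative $\int_{K\cap\mathbb{R}_+^2}(\nabla\varphi-\delta)\cdot\nabla\psi\,\mathrm{d}x$ at $t=0$. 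For the boundary integral, the trace theorem on the Lipschitz domain $B_R^+$ gives $\varphi|_{\mathbb{R}\times\{0\}}\in L^2_\mathrm{loc}(\mathbb{R}\times\{0\})$, so $\sin^2(\varphi+t\psi)$ is a bona fide $L^1$ function on $K\cap(\mathbb{R}\times\{0\})$ (and there is no integrability issue since $\sin^2$ is bounded by $1$); moreover $t\mapsto\sin^2(\varphi+t\psi)$ is smooth with $\partial_t\sin^2(\varphi+t\psi)=\sin(2(\varphi+t\psi))\psi$, which is dominated in absolute value by $|\psi|\in L^1(K\cap(\mathbb{R}\times\{0\}))$ uniformly in $t$, so by dominated convergence one differentiates under the integral sign and obtains $\tfrac1{2\varepsilon}\int_{K\cap(\mathbb{R}\times\{0\})}\sin(2\varphi)\psi\,\mathrm{d}\mathcal{H}^1$ at $t=0$. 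Since $\varphi$ is a critical point, the sum of these two contributions vanishes; as $\psi$ and $\nabla\psi$ vanish off $K$, the integration domains may be taken to be $\mathbb{R}_+^2\cap\mathrm{Supp}(\psi)$ and $(\mathbb{R}\times\{0\})\cap\mathrm{Supp}(\psi)$, and this is precisely~\eqref{formulationvariationnellephir} for $\psi\in C^1$ with compact support.

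\textbf{Step 2: passage to $H^1$ test functions.} Let now $\psi\in H^1(\mathbb{R}^2)$ have compact support. Mollify to get $\psi_n\in C^\infty_c(\mathbb{R}^2)$ with $\mathrm{Supp}(\psi_n)\subset K'$ for a fixed compact set $K'$ and $\psi_n\to\psi$ in $H^1(\mathbb{R}^2)$. Writing~\eqref{formulationvariationnellephir} for $\psi_n$ and letting $n\to\infty$: the volume term converges to $\int_{\mathbb{R}_+^2\cap K'}(\nabla\varphi-\delta)\cdot\nabla\psi\,\mathrm{d}x$ since $\nabla\psi_n\to\nabla\psi$ in $L^2(K')$ and $\nabla\varphi-\delta\in L^2(\mathbb{R}_+^2\cap K')$; the boundary term converges since the traces $\psi_n|_{\mathbb{R}\times\{0\}}\to\psi|_{\mathbb{R}\times\{0\}}$ in $L^2(K'\cap(\mathbb{R}\times\{0\}))$ by continuity of the trace operator on $B_R^+$, together with $|\sin(2\varphi)|\leqslant1$. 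This yields~\eqref{formulationvariationnellephir} for every $\psi\in H^1(\mathbb{R}^2)$ with compact support.

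The underlying computations are routine, so I do not expect a real obstacle; the only points needing care are the interpretation of the boundary term --- handled by the trace theorem on half-balls, with boundedness of $\sin^2$ removing any integrability concern --- and the density step, where one must choose the mollified test functions with supports in a common compact set so that every integral stays over a fixed bounded region. In essence the statement is just the weak Euler--Lagrange equation attached to Definition~\ref{def_pt_critique}, and the content is to make the first variation rigorous.
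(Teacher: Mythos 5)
Your proof is correct and follows essentially the same route as the paper: compute the first variation of $E_\varepsilon^\delta$ for $C^1$ test functions with compact support and then pass to $H^1$ test functions by density. The paper compresses the second step into the phrase \enquote{by density}, whereas you spell out the mollification and the trace/$L^2$ convergences, which is a harmless (and slightly more complete) elaboration of the same argument.
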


\begin{proof}
Let $\varphi \in H^1_\mathrm{loc}(\overline{\mathbb{R}_+^2})$, $\psi \in C^1(\mathbb{R}^2)$ with compact support, and $t \in \mathbb{R}$. We have
\begin{equation*}
\begin{split}
E_{\varepsilon}^\delta(\varphi+t\psi;\mathrm{Supp}(\psi))
& = \frac{1}{2} \int_{\mathbb{R}_+^2 \cap \mathrm{Supp}(\psi)}
\left( 
\left\vert \nabla \varphi + t \nabla \psi \right\vert^2
- 2 \delta \cdot \nabla (\varphi +t \psi)
\right)
\mathrm{d} x
\\
& \quad + \frac{1}{2\varepsilon} \int_{(\mathbb{R} \times \left\lbrace 0 \right\rbrace) \cap \mathrm{Supp}(\psi)}
\sin^2(\varphi+t\psi)
\ \mathrm{d} \mathcal{H}^1.
\end{split}
\end{equation*}
On the one hand, in $\mathbb{R}_+^2 \cap \mathrm{Supp}(\psi)$,
\begin{equation*}
\left\vert \nabla \varphi + t \nabla \psi \right\vert^2
- 2 \delta \cdot \nabla (\varphi +t \psi)
= \left\vert \nabla \varphi \right\vert^2 +2t(\nabla \varphi-\delta)\cdot \nabla \psi +O(t^2) \ \ \text{ as } t \rightarrow 0.
\end{equation*}
On the other hand, in $\mathbb{R} \times \left\lbrace 0 \right\rbrace$,
\begin{equation*}
\sin^2(\varphi+t\psi)
= \sin^2(\varphi) + t \psi \sin(2\varphi) + O(t^2)
\ \ \text{ as } t \rightarrow 0.
\end{equation*}
Hence, as $t \rightarrow 0$,
\begin{align*}
E_\varepsilon^\delta(\varphi+t\psi;\mathrm{Supp}(\psi))
= E_\varepsilon^\delta(\varphi;\mathrm{Supp}(\psi))
& + t\int_{\mathbb{R}_+^2 \cap \mathrm{Supp}(\psi)} (\nabla \varphi-\delta) \cdot \nabla \psi \ \mathrm{d} x
\\
& 
+ \frac{t}{2\varepsilon} \int_{(\mathbb{R} \times \left\lbrace 0 \right\rbrace) \cap \mathrm{Supp}(\psi)} \sin(2\varphi)\psi \ \mathrm{d} \mathcal{H}^1
+ O(t^2),
\end{align*}
and we deduce that
\begin{align*}
\left. \frac{\mathrm{d}}{\mathrm{d} t} \right\vert_{t=0} E_\varepsilon^\delta(\varphi + t \psi;\mathrm{Supp}(\psi))
& =
\int_{\mathbb{R}_+^2 \cap \mathrm{Supp}(\psi)} (\nabla \varphi-\delta) \cdot \nabla \psi \ \mathrm{d} x
+ \frac{1}{2\varepsilon} \int_{(\mathbb{R} \times \left\lbrace 0 \right\rbrace) \cap \mathrm{Supp}(\psi)} \sin(2\varphi)\psi \ \mathrm{d} \mathcal{H}^1.
\end{align*}
By density and by Definition~\ref{def_pt_critique}, we deduce~\eqref{formulationvariationnellephir}.
\end{proof}

\begin{proposition}
\label{propsystFeps}
Any critical point $\varphi$ of $E_{\varepsilon}^\delta$ belongs to $C^\infty(\overline{\mathbb{R}_+^2})$ and satisfies
\begin{equation}
\label{systFeps}
\left\lbrace \begin{array}{rcll} \Delta \varphi&=&0& \text{ in } \mathbb{R}_+^2, \\ \partial_2\varphi &=&\frac{1}{2\varepsilon} \sin 2\varphi + \delta_2 & \text{ on } \mathbb{R} \times \left\lbrace 0 \right\rbrace.
\end{array} \right.
\end{equation}
\end{proposition}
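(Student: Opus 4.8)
The plan is to read off both lines of~\eqref{systFeps} from the weak formulation~\eqref{formulationvariationnellephir}, in three stages: interior harmonicity, regularity up to the flat boundary, and then integration by parts to recover the pointwise Neumann condition.

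First I would test~\eqref{formulationvariationnellephir} against $\psi \in C^\infty_c(\mathbb{R}_+^2)$. The boundary integral then vanishes, and since $\delta$ is constant, $\int_{\mathbb{R}_+^2}\delta\cdot\nabla\psi = \int_{\mathbb{R}_+^2}\mathrm{div}(\psi\delta)\,\mathrm{d}x = 0$; hence $\int_{\mathbb{R}_+^2}\nabla\varphi\cdot\nabla\psi\,\mathrm{d}x = 0$ for all such $\psi$, so $\varphi$ is weakly harmonic in $\mathbb{R}_+^2$. By Weyl's lemma, $\varphi\in C^\infty(\mathbb{R}_+^2)$ with $\Delta\varphi = 0$ there, which is the first line of~\eqref{systFeps}.

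Next, the regularity of $\varphi$ up to $\mathbb{R}\times\{0\}$. Setting $w := \varphi - \delta_2 x_2$, one sees that $w$ is still harmonic and, since $x_2 = 0$ on the boundary, formally satisfies there the homogeneous reaction condition $\partial_2 w = \frac{1}{2\varepsilon}\sin(2w)$, which is exactly the setting analysed (for $\delta = 0$) by Kurzke~\cite{Kurzke06}; the constant $\delta_2$ is irrelevant to the regularity theory. To keep things self-contained I would localize near a boundary point, say the origin, work in a half-ball $B_{2r}^+$ with a cut-off supported in $B_{2r}$, and view $\varphi$ as a weak solution of the linear Neumann problem $\Delta\varphi = 0$ in $B_{2r}^+$, $-\partial_2\varphi = g$ on $B_{2r}\cap(\mathbb{R}\times\{0\})$, with datum $g = -\delta_2 - \frac{1}{2\varepsilon}\sin(2\,\tau\varphi)$, where $\tau\varphi \in H^{1/2}$ is the trace of $\varphi$. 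Because $t\mapsto\sin(2t)$ is bounded and globally Lipschitz, composition preserves $H^{1/2}$, so $g\in H^{1/2}_{\mathrm{loc}}$, and the standard tangential difference-quotient argument for the Neumann problem gives $\varphi\in H^2(B_r^+)$. This step is then bootstrapped: once $\varphi\in H^k(B_r^+)$ with $k\geq 2$, the trace $\tau\varphi$ lies in $H^{k-1/2}(\mathbb{R})$, which is a Banach algebra stable under composition with smooth functions, so $g\in H^{k-1/2}$ and Neumann elliptic regularity upgrades $\varphi$ to $H^{k+1}(B_{r/2}^+)$. Iterating and using Sobolev embedding yields $\varphi\in C^\infty(\overline{B_r^+})$; combined with the interior smoothness this gives $\varphi\in C^\infty(\overline{\mathbb{R}_+^2})$.

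Finally, with $\varphi$ smooth up to the boundary I would integrate by parts in~\eqref{formulationvariationnellephir} for an arbitrary $\psi\in C^\infty_c(\mathbb{R}^2)$: using $\Delta\varphi = 0$, the identity $\int_{\mathbb{R}_+^2}\delta\cdot\nabla\psi = -\delta_2\int_{\mathbb{R}\times\{0\}}\psi\,\mathrm{d}\mathcal{H}^1$ (the $\delta_1$-contribution drops out since the outer normal on $\mathbb{R}\times\{0\}$ is $-e_2$), and $\int_{\mathbb{R}_+^2}\nabla\varphi\cdot\nabla\psi = -\int_{\mathbb{R}\times\{0\}}\partial_2\varphi\,\psi\,\mathrm{d}\mathcal{H}^1$, so that~\eqref{formulationvariationnellephir} becomes $\int_{\mathbb{R}\times\{0\}}\bigl({-}\partial_2\varphi + \delta_2 + \frac{1}{2\varepsilon}\sin(2\varphi)\bigr)\psi\,\mathrm{d}\mathcal{H}^1 = 0$ for all such $\psi$, whence $\partial_2\varphi = \frac{1}{2\varepsilon}\sin(2\varphi) + \delta_2$ on $\mathbb{R}\times\{0\}$. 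The main obstacle is the boundary regularity: the whole bootstrap hinges on controlling the nonlinear datum $g$ in the appropriate trace spaces, and the first step from $H^1$ to $H^2$ is the delicate one — it works precisely because $\sin(2\cdot)$ is Lipschitz, so that $H^{1/2}$ is preserved even though it is not an algebra — after which the iteration is routine.
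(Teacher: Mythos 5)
Your proposal is correct, and its overall skeleton (weak formulation, interior harmonicity via Weyl's lemma, regularity up to the flat boundary, then integration by parts to read off the Neumann condition) matches the paper's; the genuine difference is in how the boundary regularity is obtained. The paper runs the tangential difference-quotient argument of \cite[Section~6.3]{Evans} directly on the nonlinear weak formulation~\eqref{formulationvariationnellephir}: it tests with $\psi=\Delta_{-h}(\zeta^2\Delta_h\varphi)$, uses the mean-value bound $\left\vert \Delta_h(\sin(2\varphi)) \right\vert \leqslant 2\left\vert \Delta_h\varphi \right\vert$ (the same Lipschitz input you exploit), converts the boundary term to an interior one by Green's formula, absorbs via Young's inequality, recovers $\partial_{22}\varphi$ from harmonicity, and then iterates the same scheme on higher derivatives -- everything stays at the level of integer-order Sobolev spaces. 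You instead freeze the nonlinearity as a Neumann datum $g=-\delta_2-\frac{1}{2\varepsilon}\sin(2\tau\varphi)\in H^{1/2}_{\mathrm{loc}}$ (Lipschitz composition preserves $H^{1/2}$) and invoke linear elliptic regularity for the Neumann problem with fractional boundary data, then bootstrap using that $H^{k-1/2}(\mathbb{R})$, $k\geqslant 2$, is an algebra stable under composition with smooth functions. This is a valid and shorter route, but it leans on heavier imported results (Neumann regularity with $H^{s}$ data giving $H^{s+3/2}$, and Moser-type composition estimates in fractional Sobolev spaces); in particular the step you call ``the standard tangential difference-quotient argument'' with data only in $H^{1/2}$ hides an $H^{-1/2}$--$H^{1/2}$ duality estimate for $\Delta_h g$ paired with the boundary trace of $\zeta^2\Delta_h\varphi$, which goes slightly beyond the Evans-level argument, so you should either cite the fractional elliptic theory or spell that pairing out. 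In short, the paper's route buys self-containedness at the cost of length, yours buys brevity at the cost of those black boxes; a last cosmetic difference is that the paper derives~\eqref{systFeps} first under the provisional assumption $\varphi\in H^2_{\mathrm{loc}}$ and proves the regularity afterwards, whereas you establish smoothness first and then integrate by parts, which is the cleaner logical order.
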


\begin{proof}
Let $\varphi \in H^1_\mathrm{loc}(\overline{\mathbb{R}_+^2})$ be a critical point of $E_\varepsilon^\delta$.

\textit{Step 1:} We begin with proving~\eqref{systFeps} and we first assume that $\varphi \in H^2_\mathrm{loc}(\overline{\mathbb{R}_+^2})$.

For every $\psi \in H^1(\mathbb{R}^2)$ with compact support, we have, thanks to~\eqref{formulationvariationnellephir},
\begin{equation*}
\int_{\mathbb{R}_+^2 \cap \mathrm{Supp}(\psi)} (\nabla \varphi-\delta) \cdot \nabla \psi \ \mathrm{d} x
+ \frac{1}{2\varepsilon} \int_{(\mathbb{R} \times \left\lbrace 0 \right\rbrace) \cap \mathrm{Supp}(\psi)} \sin( 2\varphi)\psi \ \mathrm{d} \mathcal{H}^1
= 0.
\end{equation*}
Integrating by parts, we get
\begin{equation*}
\begin{split}
& -\int_{\mathbb{R}_+^2 \cap \mathrm{Supp}(\psi)} (\Delta \varphi)\psi \ \mathrm{d} x
+ \int_{\partial(\mathbb{R}_+^2 \cap \mathrm{Supp}(\psi))} \left( \partial_\nu\varphi -\delta\cdot \nu \right) \psi \ \mathrm{d} \mathcal{H}^1
\\
& \qquad\qquad\qquad\qquad\qquad\qquad\qquad + \frac{1}{2\varepsilon} \int_{(\mathbb{R} \times \left\lbrace 0 \right\rbrace) \cap \mathrm{Supp}(\psi)}  \sin(2\varphi)\psi \ \mathrm{d} \mathcal{H}^1
= 0.
\end{split}
\end{equation*}
But $\psi \equiv 0$ on $\partial(\mathrm{Supp}(\psi))$ and $\nu=-e_2$ on $(\mathbb{R} \times \left\lbrace 0 \right\rbrace) \cap \mathrm{Supp}(\psi)$, thus
\begin{equation*}
-\int_{\mathbb{R}_+^2 \cap \mathrm{Supp}(\psi)} (\Delta \varphi)\psi \ \mathrm{d} x
+ \int_{(\mathbb{R} \times \left\lbrace 0 \right\rbrace) \cap \mathrm{Supp}(\psi)}  \left( -\partial_2\varphi +\delta_2 + \frac{1}{2\varepsilon} \sin(2\varphi) \right)\psi \ \mathrm{d} \mathcal{H}^1
= 0.
\end{equation*}
We can choose $\psi$ such that $\psi \equiv 0$ on $\mathbb{R} \times \left\lbrace 0 \right\rbrace$, so that $\Delta \varphi = 0$ in $\mathbb{R}_+^2 \cap \mathrm{Supp}(\psi)$. We then deduce that $\partial_2\varphi = \frac{1}{2\varepsilon}\sin(2\varphi)+\delta_2$ on $(\mathbb{R} \times \left\lbrace 0 \right\rbrace) \cap \mathrm{Supp}(\psi)$. This equalities being true for every $\psi \in H^1(\mathbb{R}^2)$ with compact support, we deduce~\eqref{systFeps}.

\textit{Step 2:} Let us prove that $\varphi \in H^2_\mathrm{loc}(\overline{\mathbb{R}_+^2})$.

For the interior regularity, by~\eqref{systFeps} we have $\Delta\varphi=0$ in the distributional sense in $\mathbb{R}_+^2$. By Weyl's lemma, $\varphi \in C^\infty(\mathbb{R}_+^2)$.

For the boundary regularity, we introduce tangential difference quotients as defined in \cite[Section~5.8.2]{Evans} (see also \cite[Section~7.11]{GT01}), and we use the ideas in \cite[Section~6.3]{Evans}. Our strategy consists in proving that $\varphi$ is $H^2$ around $(x_1,x_2)=(0,0)$, and then around any point $(x_1,0)$ \linebreak with $x_1 \in \mathbb{R}$, by translation.

For $\left\vert h \right\vert >0$ and $(x_1,x_2) \in \overline{\mathbb{R}_+^2}= \mathbb{R} \times [0,+\infty)$, set
\begin{equation*}
\Delta_h\varphi(x_1,x_2) := \frac{1}{h} \left( \varphi(x_1+h,x_2)-\varphi(x_1,x_2) \right).
\end{equation*}
Let $r \in (0,1)$ be fixed. Let $\zeta \in C^\infty(\overline{B_r})$ be a function that satisfies $0 \leqslant \zeta \leqslant 1$, $\zeta \equiv 1$ in $\overline{B_{r/2}}$, $\zeta \equiv 0$ in $\overline{B_r} \setminus \overline{B_{3r/4}}$ and $\left\vert \nabla \zeta \right\vert \leqslant \frac{C}{r}$ for some constant $C>0$.
For $\left\vert h \right\vert >0$, set $\psi := \Delta_{-h}(\zeta^2 \Delta_h\varphi)$, which is in $H^1(\mathbb{R}^2)$ with compact support. Thus, we can input this function $\psi$ in~\eqref{formulationvariationnellephir}. By use of the integration by parts property of difference quotients (see equation~(16) in \cite[Section~6.3]{Evans}), we get
\begin{equation*}
\int_{B_r^+} \sum_{j=1}^2
\Delta_h(\partial_j\varphi-\delta_j) \partial_j(\zeta^2\Delta_h\varphi) \mathrm{d} x
+ \frac{1}{2\varepsilon} \int_{(-r,r) \times \left\lbrace 0 \right\rbrace}
\zeta^2(\Delta_h\varphi) \Delta_h(\sin(2\varphi)) \mathrm{d} \mathcal{H}^1
= 0.
\end{equation*}
Since $\delta_1,\delta_2$ are constants and expanding the derivative of $\zeta^2\Delta_h\varphi$, the above relation can be written as $I_1+I_2+I_3=0$ with
\begin{equation*}
I_1 := \int_{B_r^+} \zeta^2 \left\vert \Delta_h(\nabla\varphi) \right\vert^2 \mathrm{d} x,
\end{equation*}
\begin{equation*}
I_2 := 2 \int_{B_r^+} \sum_{j=1}^2 \zeta (\partial_j\zeta) (\Delta_h\varphi) \partial_j(\Delta_h\varphi) \mathrm{d} x,
\end{equation*}
and
\begin{equation*}
I_3 := \frac{1}{2\varepsilon} \int_{(-r,r) \times \left\lbrace 0 \right\rbrace}
\zeta^2(\Delta_h\varphi) \Delta_h(\sin(2\varphi)) \mathrm{d} \mathcal{H}^1.
\end{equation*}
Let $\varepsilon_1>0$, that will be precised later, and $j \in \left\lbrace 1,2 \right\rbrace$.  Using the properties of $\zeta$ and Young's inequality, we have
\begin{align*}
\left\vert 2 \int_{B_r^+} \zeta (\partial_j\zeta) (\Delta_h\varphi) \partial_j(\Delta_h\varphi) \mathrm{d} x \right\vert
& =
\left\vert 2 \int_{B_{3r/4}^+} \zeta (\partial_j\zeta) (\Delta_h\varphi) \partial_j(\Delta_h\varphi) \mathrm{d} x \right\vert
\\
& \leqslant
\varepsilon_1 \int_{B_r^+} \zeta^2 \left\vert \Delta_h(\partial_j\varphi) \right\vert^2 \mathrm{d} x
\\
& \quad + \frac{1}{\varepsilon_1}\int_{B_{3r/4}^+} \left\vert \partial_j\zeta \right\vert^2 \left\vert \Delta_h \varphi \right\vert^2 \mathrm{d} x.
\end{align*}
In particular, using the properties of $\zeta$ and \cite[Section~5.8.2, Theorem~3(i)]{Evans}, we have
\begin{align*}
\int_{B_{3r/4}^+} \left\vert \partial_j\zeta \right\vert^2 \left\vert \Delta_h \varphi \right\vert^2 \mathrm{d} x
& \leqslant \frac{C}{r^2} \int_{B_{3r/4}^+} \left\vert \Delta_h \varphi \right\vert^2 \mathrm{d} x
\leqslant \frac{C}{r^2} \int_{B_r^+} \left\vert \nabla \varphi \right\vert^2 \mathrm{d} x.
\end{align*}
We deduce that
\begin{equation}
\label{diffq_estimI2}
\left\vert I_2 \right\vert
\leqslant
\varepsilon_1 \int_{B_r^+} \zeta^2 \left\vert \Delta_h(\nabla\varphi) \right\vert^2 \mathrm{d} x
+
\frac{C}{\varepsilon_1 r^2} \int_{B_r^+} \left\vert \nabla \varphi \right\vert^2 \mathrm{d} x.
\end{equation}
By the mean-value theorem, $\left\vert \Delta_h(\sin(2\varphi)) \right\vert \leqslant 2 \left\vert \Delta_h \varphi \right\vert$. Combining this with Green's formula,
\begin{equation*}
\left\vert I_3 \right\vert
\leqslant \frac{1}{\varepsilon} \int_{(-r,r) \times \left\lbrace 0 \right\rbrace} \zeta^2 \left\vert \Delta_h \varphi \right\vert^2 \mathrm{d} \mathcal{H}^1
= \frac{1}{\varepsilon} \int_{B_r^+} -\partial_2 \left( (\zeta \Delta_h\varphi)^2 \right) \mathrm{d} x
= -\frac{2}{\varepsilon} \int_{B_r^+} \zeta \Delta_h\varphi \partial_2 \left( \zeta \Delta_h\varphi \right) \mathrm{d} x.
\end{equation*}
We deduce that
\begin{equation*}
\left\vert I_3 \right\vert
\leqslant \frac{2}{\varepsilon} \int_{B_r^+} \zeta \left\vert \Delta_h\varphi \right\vert \left\vert \nabla(\zeta\Delta_h\varphi) \right\vert \mathrm{d} x.
\end{equation*}
Let $\varepsilon_2 \in (0,\frac{1}{2})$, that will be precised later. By Young's inequality,
\begin{equation*}
\left\vert I_3 \right\vert
\leqslant \frac{2r\varepsilon_2}{\varepsilon} \int_{B_r^+} \left\vert \nabla (\zeta \Delta_h\varphi) \right\vert^2 \mathrm{d} x
+ \frac{2}{r\varepsilon\varepsilon_2} \int_{B_r^+} \zeta^2 \left\vert \Delta_h\varphi \right\vert^2 \mathrm{d} x.
\end{equation*}
On the one hand, using the properties of $\zeta$, Young's inequality and \cite[Section~5.8.2, Theorem~3(i)]{Evans},
\begin{align*}
\int_{B_r^+} \left\vert \nabla (\zeta \Delta_h\varphi) \right\vert^2 \mathrm{d} x
& =
\int_{B_{3r/4}^+} \left\vert \nabla (\zeta \Delta_h\varphi) \right\vert^2 \mathrm{d} x
\\
& =
\int_{B_{3r/4}^+} \left\vert (\Delta_h\varphi) \nabla \zeta + \zeta \Delta_h(\nabla \varphi) \right\vert^2 \mathrm{d} x
\\
& \leqslant
2 \int_{B_{3r/4}^+} \left\vert \nabla \zeta \right\vert^2 \left\vert \Delta_h \varphi \right\vert^2 \mathrm{d} x
+ 2 \int_{B_{3r/4}^+} \zeta^2 \left\vert \Delta_h(\nabla \varphi) \right\vert^2 \mathrm{d} x
\\
& \leqslant
\frac{C}{r^2} \int_{B_r^+} \left\vert \nabla \varphi \right\vert^2 \mathrm{d} x
+ 2 \int_{B_r^+} \zeta^2 \left\vert \Delta_h(\nabla \varphi) \right\vert^2 \mathrm{d} x.
\end{align*}
On the other hand, using the properties of $\zeta$ and \cite[Section~5.8.2, Theorem~3(i)]{Evans},
\begin{align*}
\int_{B_r^+} \zeta^2 \left\vert \Delta_h\varphi \right\vert^2 \mathrm{d} x
& \leqslant
\int_{B_{3r/4}^+} \left\vert \Delta_h\varphi \right\vert^2 \mathrm{d} x
\leqslant
 C\int_{B_r^+} \left\vert \nabla \varphi \right\vert^2 \mathrm{d} x.
\end{align*}
We deduce that
\begin{equation}
\label{diffq_estimI3}
\left\vert I_3 \right\vert
\leqslant
C(\varepsilon)r\varepsilon_2 \int_{B_r^+} \zeta^2 \left\vert \Delta_h(\nabla \varphi) \right\vert^2 \mathrm{d} x
+
\frac{C(\varepsilon,\varepsilon_2)}{r} \int_{B_r^+} \left\vert \nabla \varphi \right\vert^2 \mathrm{d} x.
\end{equation}
Combining~\eqref{diffq_estimI2},~\eqref{diffq_estimI3} with the definition of $I_1$ and the relation $I_1+I_2+I_3=0$, we deduce that
\begin{equation*}
\left( 1-\varepsilon_1-C(\varepsilon)r\varepsilon_2 \right)
\int_{B_r^+} \zeta^2 \left\vert \Delta_h(\nabla \varphi) \right\vert^2 \mathrm{d} x
\leqslant \frac{(1+r)C(\varepsilon,\varepsilon_1,\varepsilon_2)}{r^2} \int_{B_r^+} \left\vert \nabla \varphi \right\vert^2 \mathrm{d} x.
\end{equation*}
Using that $r \in (0,1)$, and choosing $\varepsilon_1$ and $\varepsilon_2$ so small that $1-\varepsilon_1-C(\varepsilon)\varepsilon_2 \geqslant \frac{1}{2}$, we finally get
\begin{equation*}
\int_{B_{r/2}^+} \left\vert \Delta_h (\nabla \varphi) \right\vert^2 \mathrm{d} x
\leqslant \int_{B_r^+} \zeta^2 \left\vert \Delta_h(\nabla \varphi) \right\vert^2 \mathrm{d} x
\leqslant \frac{C}{r^2} \int_{B_r^+} \left\vert \nabla \varphi \right\vert^2 \mathrm{d} x,
\end{equation*}
with the first inequality coming from the definition of $\zeta$. By \cite[Section~5.8.2, Theorem~3(ii)]{Evans}, taking the limits when $h$ tends to zero, we get
\begin{equation*}
\int_{B_{r/2}^+} \left\vert \partial_1(\nabla \varphi) \right\vert^2 \mathrm{d} x
\leqslant
\frac{C}{r^2} \int_{B_r^+} \left\vert \nabla \varphi \right\vert^2 \mathrm{d} x.
\end{equation*}
We deduce that
\begin{equation*}
\int_{B_{r/2}^+} \left( \left\vert \partial_{11}\varphi \right\vert^2+\left\vert \partial_{12}\varphi \right\vert^2 \right) \mathrm{d} x
\leqslant 
\frac{C}{r^2} \int_{B_r^+} \left\vert \nabla \varphi \right\vert^2 \mathrm{d} x.
\end{equation*}
Since $\varphi \in C^\infty(\mathbb{R}_+^2)$, we get $\partial_{21}\varphi=\partial_{12}\varphi$ by Schwarz's lemma, and $\partial_{22}\varphi=-\partial_{11}\varphi$ because $\Delta \varphi = 0$ in $\mathbb{R}_+^2$. It follows that $\varphi \in H^2(B_{r/2}^+)$. Translating the support of $\zeta$ with respect to $x_1$ in the previous calculations, we deduce that $\varphi \in H^2_\mathrm{loc}(\overline{\mathbb{R}_+^2})$.

\textit{Step 3:} Let us prove that $\varphi \in C^\infty(\overline{\mathbb{R}_+^2})$.

For the interior regularity, we already noticed in Step 2 that $\varphi \in C^\infty(\mathbb{R}_+^2)$ as a harmonic function.

For the boundary regularity, we repeat the arguments of Step 2 for $\nabla^2\varphi$, etc. and prove by induction that $\varphi \in H^m_\mathrm{loc}(\overline{\mathbb{R}_+^2})$ for every $m \in \mathbb{N}^\ast$, as in \cite[Section~6.3, Theorem~5]{Evans}. Using Sobolev embeddings (see \cite[Section~5.6.3, Theorem~6(ii)]{Evans}), we deduce that $\varphi \in C^\infty(\overline{U})$ for every bounded open subset $U \subset \overline{\mathbb{R}_+^2}$. It follows that $\varphi \in C^\infty(\overline{\mathbb{R}_+^2})$.
\end{proof}

Given a critical point $\varphi_\varepsilon$ of $E_\varepsilon^\delta$ that satisfies~\eqref{systFeps} by Proposition~\ref{propsystFeps}, we will consider in the following the rescaled functions
\begin{equation}
\label{phirepsilon}
\phi_\varepsilon \colon (x_1,x_2) \in \overline{\mathbb{R}_+^2} \mapsto 2\varphi_\varepsilon(\varepsilon x_1, \varepsilon x_2)+\pi,
\end{equation}
that are harmonic in $\mathbb{R}_+^2$ and satisfy $\partial_2\phi_\varepsilon = -\sin(\phi_\varepsilon)+2 \varepsilon \delta_2$ on $\mathbb{R} \times \left\lbrace 0 \right\rbrace$, i.e.
\begin{equation}
\label{systphirepsilon}
\left\lbrace \begin{array}{rcll}
\Delta \phi_\varepsilon&=&0&\text{ in } \mathbb{R}_+^2,
\\ \partial_2\phi_\varepsilon-\lambda_\varepsilon+\sin(\phi_\varepsilon) &=&0 & \text{ on } \mathbb{R} \times \left\lbrace 0 \right\rbrace,
\end{array} \right.
\end{equation}
where $\lambda_\varepsilon=2 \varepsilon \delta_2$. We now look for explicit solutions of this problem under the boundedness condition $\left[ (x_1,x_2) \mapsto \phi_\varepsilon(x_1,x_2)-\lambda_\varepsilon x_2 \right] \in L^\infty(\mathbb{R}_+^2)$ in order to get a modified Peierls-Nabarro problem. More precisely, we will especially look for nonconstant, nonperiodic and bounded solutions, since this type of solutions is expected to minimize $E_{\varepsilon}^\delta$ as for $\delta=0$ (see~\cite{Kurzke06}).

\subsection{A modified Peierls-Nabarro problem}
\label{subsection2.2}

For any $\lambda \in \mathbb{R}$, we consider the problem
\begin{equation}
\tag{PN$_\lambda$}
\left\lbrace \begin{array}{l}
f \in C^\infty(\mathbb{R}_+^2) \cap C^1(\overline{\mathbb{R}_+^2}),
\\
(x_1,x_2) \mapsto f(x_1,x_2)-\lambda x_2 \text{ is bounded in } \mathbb{R}_+^2,
\\
\Delta f=0 \text{ in } \mathbb{R}_+^2,
\\
\partial_2 f-\lambda+\sin f = 0 \text{ on } \mathbb{R} \times \left\lbrace 0 \right\rbrace.
\end{array} \right.
\end{equation}

\noindent
The problem~\eqref{PNlambda} is a generalization of the classical Peierls-Nabarro problem (which is in fact the case $\lambda=0$). In~\cite{Tol97}, Toland shows a link between solutions of the Peierls-Nabarro problem and solutions of the Benjamin-Ono problem, that is given by
\begin{equation}
\tag{BO}
\label{BO}
\left\lbrace \begin{array}{l}
u \in C^\infty(\mathbb{R}_+^2) \cap C^1(\overline{\mathbb{R}_+^2}),
\\
u \text{ is bounded in } \mathbb{R}_+^2,
\\
\Delta u=0 \text{ in } \mathbb{R}_+^2,
\\
\partial_2 u+u^2- u = 0 \text{ on } \mathbb{R} \times \left\lbrace 0 \right\rbrace.
\end{array} \right.
\end{equation}

\noindent
The main point in finding solutions of the Peierls-Nabarro problem is based on the fact that all solutions of the Benjamin-Ono problem were classified by Amick and Toland in~\cite{AmickToland}. The rest of this subsection is devoted to determine solutions of~\eqref{PNlambda}, using the ideas in Toland~\cite{Tol97}. A first observation before linking solutions of~\eqref{PNlambda} with solutions of~\eqref{BO} is the following remark.

\begin{remarque}
\label{remPNlambda_odd}
Note that the problem \eqref{PNlambda} is "odd" with respect to $\lambda$, in the sense that $f$ satisfies~\eqref{PNlambda} if and only if $-f$ satisfies~(PN$_{-\lambda}$). In particular, $f$ satisfies~(PN$_{0}$) if and only if $-f$ satisfies~(PN$_{0}$).
\end{remarque}

\newpage

\begin{flushleft}
\textbf{Linking solutions of \eqref{PNlambda} to solutions of \eqref{BO}.}
\end{flushleft}

Let us formally explain the strategy for establishing Theorem~\ref{thmPNlambdaBO}. Given $\lambda \in \mathbb{R}$ and a solution $f$ of~\eqref{PNlambda}, we look for a relation between $f$ and a solution of~\eqref{BO}. To do so, a crucial observation -- due to Toland~\cite{Tol97} -- is that the function
\begin{equation*}
(x_1,x_2) \in \overline{\mathbb{R}_+^2} \mapsto 2 \arctan \left( \frac{x_1}{1+x_2} \right)
\end{equation*}
satisfies~(PN$_0$), and the function
\begin{equation*}
(x_1,x_2) \in \overline{\mathbb{R}_+^2} \mapsto \frac{2(1+x_2)}{x_1^2+(1+x_2)^2},
\end{equation*}
that satisfies~\eqref{BO}, is the $x_1$-derivative of the first one. It is clear that the latter function is also the $x_1$-derivative of the function
\begin{equation*}
(x_1,x_2) \in \overline{\mathbb{R}_+^2} \mapsto 2 \arctan \left( \frac{x_1}{1+x_2} \right) + \lambda x_2,
\end{equation*}
that satisfies~\eqref{PNlambda}. This is a motivation for setting $u=\partial_1f$ in $\mathbb{R}_+^2$, where $u$ is a solution of~\eqref{BO}.

On the one hand, using $\eqref{BO}$, $\partial_2u=u-u^2=u(1-u)$ on $\mathbb{R} \times \left\lbrace 0 \right\rbrace$. On the other hand, using~$\eqref{PNlambda}$,
\begin{equation*}
\partial_2u
=\partial_{21}f
=\partial_1(\partial_2f-\lambda)
=\partial_1(-\sin f)
=-\partial_1f\cos f
=-u\cos f
\end{equation*}
on $\mathbb{R} \times \left\lbrace 0 \right\rbrace$. Hence, $u(1+\cos f-u)=0$ on $\mathbb{R} \times \left\lbrace 0 \right\rbrace$ and this identity motivates the \linebreak relation $u=1+\cos f$ on $\mathbb{R} \times \left\lbrace 0 \right\rbrace$ (in particular since~\eqref{BO} has nontrivial solutions, see~\cite{Tol97} or Theorem~\ref{thmSolBO} below). Moreover,
\begin{equation*}
\partial_1u=\partial_1(1+\cos f)=-\partial_1f\sin f=\partial_1f(\partial_2f-\lambda)
\end{equation*}
on $\mathbb{R} \times \left\lbrace 0 \right\rbrace$. By harmonicity and boundedness, we can extend this equality to $\mathbb{R}_+^2$ using the Phragm\'en-Lindel\"of principle~\cite[Theorem~2.3.2]{Ransford}. We observe that $u=\partial_1f$ and $\partial_1u=\partial_1f(\partial_2f-\lambda)$ in $\mathbb{R}_+^2$, so that $\partial_1u=\frac{1}{2}w_{f,\lambda}$ where
\begin{equation*}
w_{f,\lambda}=\partial_{11}f+\partial_1f(\partial_2f-\lambda).
\end{equation*} 

Hence, the idea of Theorem~\ref{thmPNlambdaBO} consists in setting $w_{f,\lambda}$ as above, and re-construct $u$ as an integral of $\frac{1}{2} w_{f,\lambda}$ with respect to $x_1$ such that $u$ satisfies~\eqref{BO}. More precisely, given $w_{f,\lambda}$, we first introduce a function $W_{f,\lambda}$ that satisfies $w_{f,\lambda} = \partial_1 W_{f,\lambda}$:
\begin{equation*}
W_{f,\lambda}(x_1,x_2)=W_{f,\lambda}(0,x_2)+\int_0^{x_1} w_{f,\lambda} (s,x_2) \ \mathrm{d} s,
\end{equation*}
where $W_{f,\lambda}(0,x_2)$ is chosen such that $W_{f,\lambda}$ is harmonic. Then, we set
\begin{equation*}
u(x_1,x_2)=\frac{1}{2}W_{f,\lambda}(x_1,x_2)+G_{f,\lambda}(x_2),
\end{equation*}
where $G_{f,\lambda}$ has to be precised. Since $W_{f,\lambda}$ is harmonic and $u$ must be harmonic for satisfying~\eqref{BO}, then $G_{f,\lambda}$ must be affine. The above relation and the calculation of $W_{f,\lambda}(x_1,0)$ (see~\eqref{W(x_1,0)} below or~\cite[Section~4]{Tol97}) give
\begin{equation}
\label{Gflambda0}
G_{f,\lambda}(0)
=u(x_1,0)-C+\frac{1}{2}\partial_1f(x_1,0)-\frac{1}{2}\cos f(x_1,0).
\end{equation}
However, as $u$ is expected to satisfy $u(x_1,0)=1+\cos f(x_1,0)$, we see that $G_{f,\lambda}(0)$ is a priori not constant.

For solving this difficulty, we rely on the odd symmetry of the problem~\eqref{PNlambda} (see Remark~\ref{remPNlambda_odd}). More precisely, we consider
\begin{equation*}
v(x_1,x_2)=\frac{1}{2}W_{-f,-\lambda}(x_1,x_2)+G_{-f,-\lambda}(x_2),
\end{equation*}
so that $\partial_1v=\frac{1}{2}\partial_1W_{-f,-\lambda}=\frac{1}{2}w_{-f,-\lambda}$. Similarly than $u$ before, $v$ is expected to satisfy the \linebreak relation $v(x_1,0)=1+\cos f(x_1,0)$, so that adding~\eqref{Gflambda0} and the analogous identity for $G_{-f,-\lambda}(0)$, the terms $\partial_1f(x_1,0)$ cancel and we get:
\begin{equation*}
G_{f,\lambda}(0)+G_{-f,-\lambda}(0)
= C+\underbrace{\cos f(x_1,0)}_{\text{from } u(x_1,0)} + \underbrace{\cos f(x_1,0)}_{\text{from } v(x_1,0)} - \cos f(x_1,0).
\end{equation*}
In order to cancel the cosines completely, we replace the functions $u$ and $v$ defined above by their half, so that the sum $u(x_1,0)+v(x_1,0)$ is expected to be equal to $1+\cos f(x_1,0)$. This replacements do not change harmonicity or boundedness, and it justifies the expected relations~\eqref{v-u},~\eqref{dr_1u+dr_1v} and~\eqref{u0+v0} below.

\begin{theoreme}
\label{thmPNlambdaBO}
Let $\lambda \in \mathbb{R}$ and $f$ be a solution of~$\eqref{PNlambda}$. Then there exist two solutions $u$ and $v$ of~$\eqref{BO}$ such that
\begin{equation}
u-v=\partial_1f \ \ \ \ \text{ in } \mathbb{R}_+^2,
\label{v-u}
\end{equation}
\begin{equation}
\partial_1u+\partial_1v=\partial_1f(\partial_2f-\lambda) \ \ \ \ \text{ in } \mathbb{R}_+^2,
\label{dr_1u+dr_1v}
\end{equation}
and
\begin{equation}
u(x_1,0)+v(x_1,0)=1 + \cos f(x_1,0) \ \ \ \ \forall x_1 \in \mathbb{R}.
\label{u0+v0}
\end{equation}
\end{theoreme}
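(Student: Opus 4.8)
The plan is to turn the heuristic above into a rigorous construction. Write $g:=f-\lambda x_2$, which is bounded and harmonic in $\mathbb{R}_+^2$, of class $C^1$ up to the boundary, with $\partial_2 g=-\sin g$ on $\mathbb{R}\times\{0\}$ (since $f=g$ there). Because the boundary datum $-\sin g$ is smooth and bounded, boundary Schauder estimates for the Neumann problem, combined with interior estimates and a bootstrap as in the proof of Proposition~\ref{propsystFeps}, give $f\in C^\infty(\overline{\mathbb{R}_+^2})$ with $\partial_1 f$, $\partial_2 f-\lambda$, $\partial_{11}f$ (and all higher derivatives) bounded on $\overline{\mathbb{R}_+^2}$. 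This furnishes the tool used throughout: a bounded harmonic function on $\mathbb{R}_+^2$ that vanishes continuously on $\mathbb{R}\times\{0\}$ is identically zero (odd reflection and Liouville, or the Phragm\'en--Lindel\"of principle \cite[Theorem~2.3.2]{Ransford}); in particular a bounded harmonic function coincides with the Poisson extension of its trace.

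Next I would build $u$ and $v$. Set $w_{f,\lambda}:=\partial_{11}f+\partial_1 f\,(\partial_2 f-\lambda)$; from $\Delta f=0$ one gets $\Delta(\partial_1 f\,\partial_2 f)=2\,\partial_{12}f\,(\partial_{11}f+\partial_{22}f)=0$, so $w_{f,\lambda}$ is harmonic, it is bounded by the previous step, and the boundary condition $\partial_2 f-\lambda=-\sin f$ yields $w_{f,\lambda}(\cdot,0)=\partial_1\big((\partial_1 f+\cos f)(\cdot,0)\big)$. Writing $\Xi$ for the Poisson extension of $\cos f(\cdot,0)$, the bounded harmonic function $W_{f,\lambda}:=\partial_1 f+\Xi$ has $\partial_1 W_{f,\lambda}=w_{f,\lambda}$ (both sides are bounded harmonic with the same boundary trace) and $W_{f,\lambda}(\cdot,0)=(\partial_1 f+\cos f)(\cdot,0)$, which is formula~\eqref{W(x_1,0)} up to the normalization constant. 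Applying this to the pair $(-f,-\lambda)$, which solves~(PN$_{-\lambda}$) by Remark~\ref{remPNlambda_odd}, and using $\cos(-f)=\cos f$, gives $W_{-f,-\lambda}=-\partial_1 f+\Xi$. I then put
\[
u:=\tfrac12 W_{f,\lambda}+\tfrac12=\tfrac12(\partial_1 f+\Xi)+\tfrac12,\qquad v:=\tfrac12 W_{-f,-\lambda}+\tfrac12=\tfrac12(-\partial_1 f+\Xi)+\tfrac12,
\]
the additive affine corrections $G_{f,\pm\lambda}$ of the heuristic being forced to the constant $\tfrac12$ by the requirements that follow. Both $u$ and $v$ are bounded and harmonic, and since $\cos f(\cdot,0)\in C^\infty(\mathbb{R})$ with bounded derivatives, Schauder regularity of the Poisson extension gives $u,v\in C^\infty(\mathbb{R}_+^2)\cap C^1(\overline{\mathbb{R}_+^2})$.

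The three identities are immediate from these formulas: $u-v=\partial_1 f$ is~\eqref{v-u}; $\partial_1 u+\partial_1 v=\tfrac12(w_{f,\lambda}+w_{-f,-\lambda})=\partial_1 f\,(\partial_2 f-\lambda)$ is~\eqref{dr_1u+dr_1v}; and on the boundary $u(\cdot,0)+v(\cdot,0)=\Xi(\cdot,0)+1=1+\cos f(\cdot,0)$ is~\eqref{u0+v0}. It remains to check that $u$ (and $v$) solves~\eqref{BO}: only the nonlinear boundary condition $\partial_2 u+u^2-u=0$ on $\mathbb{R}\times\{0\}$ is not yet in hand. Using $u(\cdot,0)=\tfrac12(1+\partial_1 f+\cos f)(\cdot,0)$, $\partial_2 u(\cdot,0)=\tfrac12(\partial_{12}f+\partial_2\Xi)(\cdot,0)$, and $\partial_{12}f(\cdot,0)=-(\cos f\,\partial_1 f)(\cdot,0)$ (obtained by differentiating the boundary condition of~\eqref{PNlambda} tangentially), a short algebraic manipulation shows that $\partial_2 u+u^2-u=0$ on the boundary is equivalent to the single identity $\partial_2\Xi(\cdot,0)=\tfrac12\big(\sin^2 f-(\partial_1 f)^2\big)(\cdot,0)$, i.e. to a formula for the Dirichlet-to-Neumann map of the half-plane applied to $\cos f(\cdot,0)$ (and the same identity comes out for $v$). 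To prove it I would use that $G:=(\lambda-\partial_2 f)-i\,\partial_1 f$ is holomorphic and bounded in $\mathbb{R}_+^2$ --- it is the Cauchy--Riemann pair of the harmonic conjugates $\lambda-\partial_2 f$ and $\partial_1 f$ --- so that $G^2$ is holomorphic and bounded with $\Re(G^2)=(\lambda-\partial_2 f)^2-(\partial_1 f)^2$ (boundary trace $(\sin^2 f-(\partial_1 f)^2)(\cdot,0)$) and $\Im(G^2)=2(\partial_2 f-\lambda)\partial_1 f$ (boundary trace $2\,\partial_1(\cos f)(\cdot,0)$, since $(\partial_2 f-\lambda)\partial_1 f=\partial_1(\cos f)$ on the boundary); the conjugate-function relation for bounded holomorphic functions applied to $G^2$, compared with the one for $\Xi+i\Xi^\ast$ (where $\partial_2\Xi=-\partial_1\Xi^\ast$ and $\sin f\,\partial_1 f=-\tfrac12\Im(G^2)$ on the boundary), gives the desired identity up to an additive constant. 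That constant equals $\tfrac12\Re(G^2)$ at infinity and vanishes because $\partial_1 f\to0$ and $f$ tends along the boundary to a zero of $\sin$ as $x_1\to\pm\infty$ (for the $x_1$-periodic solutions one uses instead that the periodic boundary trace in question has zero mean). Substituting back yields $\partial_2 u(\cdot,0)=\tfrac14\big(1-(\partial_1 f+\cos f)^2\big)(\cdot,0)=(u-u^2)(\cdot,0)$, and likewise for $v$.

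\textbf{The main obstacle} is this last step. The Benjamin--Ono boundary equation is quadratic, so verifying it for $u$ and $v$ forces one to re-express the boundary nonlinearity $u-u^2$ through the Dirichlet-to-Neumann operator of the half-plane and then to compute that operator on $\cos f(\cdot,0)$ --- this is where the holomorphy of $\partial_1 f-i(\partial_2 f-\lambda)$ and the boundary identity $\partial_1 f\,(\partial_2 f-\lambda)=\partial_1(\cos f)$ do the work --- with the fiddly point of discarding the additive constant by means of the behaviour of $f$ at infinity. Everything else (the construction of $u,v$ via the odd-symmetry trick, and the three identities) reduces to elliptic regularity and to the uniqueness of bounded harmonic functions in the half-plane.
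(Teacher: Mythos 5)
Your construction in fact reproduces the paper's one in different clothing: since the paper's $W_{f,\lambda}$ is bounded, harmonic and has boundary trace $A_\lambda+\partial_1 f(\cdot,0)+\cos f(\cdot,0)$, it coincides with $\partial_1 f+\Xi+A_\lambda$ by the same uniqueness-of-bounded-harmonic-extensions argument you use, so your $u,v$ are exactly the paper's $u=\frac12(1+W_{f,\lambda}-A_\lambda)$, $v=\frac12(1+W_{-f,-\lambda}-A_{-\lambda})$. The three identities \eqref{v-u}, \eqref{dr_1u+dr_1v}, \eqref{u0+v0} then come out the same way, and your replacement of Toland's explicit integral computations (the paper's Steps 1--2, cited from~\cite{Tol97}) by the holomorphy of $G=(\lambda-\partial_2 f)-i\,\partial_1 f$ and the Cauchy--Riemann relations for $G^2$ is a legitimate reformulation of the same mechanism; the regularity/boundedness bootstrap you invoke at the start is also acceptable.

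There is, however, a genuine gap at the one point you yourself identify as the main obstacle: the evaluation of the additive constant in the identity $2\,\partial_2\Xi=\Re(G^2)+c_0$. You propose to kill $c_0$ using the behaviour of $f(\cdot,0)$ as $x_1\to\pm\infty$ (convergence of $\partial_1 f$ to $0$ and of $f$ to a zero of $\sin$, or, "for the $x_1$-periodic solutions", a zero-mean argument). None of this is available at this stage: the only hypotheses on $f$ are those in \eqref{PNlambda}, and the asymptotic or periodic structure of solutions is precisely what Theorem~\ref{thmSolPNlambda} deduces \emph{from} the present theorem, so the argument as written is circular. The step is repairable, and the correct limit is in the vertical direction rather than the horizontal one: the Cauchy--Riemann computation shows that $2\,\partial_2\Xi-\Re(G^2)$ is a harmonic function with vanishing $x_2$-derivative, hence of the form $c_1x_1+c_0$; its boundedness on the line $\{x_2=1\}$ (interior gradient estimate for the bounded harmonic $\Xi$, plus boundedness of $\nabla(f-\lambda x_2)$) forces $c_1=0$, and letting $x_2\to+\infty$, where $|\nabla\Xi|\leqslant C/x_2$ and $|\nabla(f-\lambda x_2)|\leqslant C/x_2$ give $\partial_2\Xi\to0$ and $\Re(G^2)\to0$, forces $c_0=0$. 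This is exactly the normalization that the paper's (Toland's) formula for $W_{f,\lambda}$ builds in through the convergent integral $\int_{x_2}^{+\infty}\bigl((\partial_2 f(0,t)-\lambda)^2-\partial_1 f(0,t)^2\bigr)\,\mathrm{d}t$; with this correction your proof goes through.
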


\begin{proof}
Let $\lambda \in \mathbb{R}$ and $f \colon \overline{\mathbb{R}_+^2} \rightarrow \mathbb{R}$ be a solution of~\eqref{PNlambda}.

\textit{Step 1:} The function $w_{f,\lambda} \colon \overline{\mathbb{R}_+^2} \rightarrow \mathbb{R}$, defined as
\begin{equation*}
w_{f,\lambda} := \partial_{11}f+\partial_1f(\partial_2f-\lambda),
\end{equation*}
is harmonic in $\mathbb{R}_+^2$ and satisfies
\begin{equation*}
w_{f,\lambda}(x_1,0) = \left. \left[ \partial_1 \left( \partial_1f(x_1,x_2)+\cos f(x_1,x_2) \right) \right]\right\vert_{x_2=0} \ \ \ \ \forall x_1 \in \mathbb{R},
\label{w(x_1,0)}
\end{equation*}
\begin{equation*}
\partial_2w_{f,\lambda}(x_1,0)= \left. \left[ -\frac{1}{2}\partial_1\left( (\partial_1f(x_1,x_2)+\cos f(x_1,x_2))^2 \right) \right] \right\vert_{x_2=0} \ \ \ \ \forall x_1 \in \mathbb{R},
\label{dr_2w(x_1,0)}
\end{equation*}
and
\begin{equation*}
-\partial_1w_{f,\lambda}=\partial_{22}\left( \partial_1f-\lambda f \right) +\partial_{22}f\partial_2f-\partial_{12}f\partial_1f \ \ \ \ \text{ in } \mathbb{R}_+^2.
\label{-dr_1w}
\end{equation*}
For the proofs of the harmonicity and of the three identities above, that are elementary calculations, we refer to Toland~\cite[Section~4]{Tol97}.

\textit{Step 2:} The function $W_{f,\lambda} \colon \overline{\mathbb{R}_+^2} \rightarrow \mathbb{R}$, defined as
\begin{equation*}
\begin{split}
W_{f,\lambda}(x_1,x_2)
& :=
\partial_1f(0,x_2)
- \frac{1}{2} \int_{x_2}^{+\infty} \left( (\partial_2f(0,t)-\lambda)^2-\partial_1f(0,t)^2 \right) \mathrm{d} t
\\
& \qquad
+ \int_0^{x_1} w_{f,\lambda}(s,x_2) \ \mathrm{d} s,
\end{split}
\end{equation*}
for every $(x_1,x_2) \in \overline{\mathbb{R}_+^2}$, is harmonic and bounded in $\mathbb{R}_+^2$ and satisfies
\begin{equation}
W_{f,\lambda}(x_1,0)
= A_{\lambda}
+ \partial_1f(x_1,0)
+ \cos f(x_1,0)
\ \ \ \ \forall x_1 \in \mathbb{R},
\label{W(x_1,0)}
\end{equation}
and
\begin{equation}
\partial_2W_{f,\lambda}(x_1,0)=\frac{1}{2} \left( 1+W_{f,\lambda}(x_1,0)-A_\lambda \right) \left( 1-W_{f,\lambda}(x_1,0)+A_\lambda \right) \ \ \ \ \forall x_1 \in \mathbb{R},
\label{dr_2W(x_1,0)}
\end{equation}
where
\begin{equation*}
A_{\lambda} = - \cos f(0,0) - \frac{1}{2} \int_0^{+\infty} \left( (\partial_2f(0,t)-\lambda)^2-\partial_1f(0,t)^2 \right) \mathrm{d} t.
\end{equation*}
Once again, we refer to Toland~\cite[Section~4]{Tol97} for the proofs of this properties of $W_{f,\lambda}$, using Step~1.

\textit{Step 3:} Let $u \colon \overline{\mathbb{R}_+^2} \rightarrow \mathbb{R}$ be defined as
\begin{equation*}
u(x_1,x_2) := \frac{1}{2} \left( 1+W_{f,\lambda}(x_1,x_2)-A_\lambda \right),
\end{equation*}
for every $(x_1,x_2) \in \overline{\mathbb{R}_+^2}$. By Step~2, $u$ is harmonic and bounded in $\mathbb{R}_+^2$, and of class $C^1$ in $\overline{\mathbb{R}_+^2}$. Using~\eqref{dr_2W(x_1,0)}, for every $x_1 \in \mathbb{R}$,
\begin{align*}
\partial_2u(x_1,0)
& = \frac{1}{2} \partial_2 W_{f,\lambda} (x_1,0)
\\
& = \frac{1}{4}(1+W_{f,\lambda}(x_1,x_2)-A_\lambda)(1-W_{f,\lambda}(x_1,x_2)+A_\lambda)
\\
& = u(x_1,0)(1-u(x_1,0)),
\end{align*}
so that $u$ is a solution of~\eqref{BO}.

Let $v \colon \overline{\mathbb{R}_+^2} \rightarrow \mathbb{R}$ be defined as
\begin{equation*}
v(x_1,x_2) := \frac{1}{2} \left( 1+W_{-f,-\lambda}(x_1,x_2)-A_{-\lambda} \right),
\end{equation*}
for every $(x_1,x_2) \in \overline{\mathbb{R}_+^2}$. Similarly than before, $v$ is harmonic and bounded in $\mathbb{R}_+^2$, of class $C^1$ \linebreak in $\overline{\mathbb{R}_+^2}$ and satisfies~\eqref{BO}.

For every $x_1 \in \mathbb{R}$, using~\eqref{W(x_1,0)},
\begin{align*}
u(x_1,0)+v(x_1,0)
& = 1 + \frac{1}{2}(W_{f,\lambda}(x_1,0)-A_\lambda) + \frac{1}{2}(W_{-f,-\lambda}(x_1,0)-A_{-\lambda})
\\
& = 1 + \cos f(x_1,0),
\end{align*}
which shows~\eqref{u0+v0}. Using~\eqref{W(x_1,0)} again, for every $x_1 \in \mathbb{R}$,
\begin{align*}
u(x_1,0)-v(x_1,0)
& = \frac{1}{2}(W_{f,\lambda}(x_1,0)-A_\lambda)-\frac{1}{2}(W_{-f,-\lambda}(x_1,0)-A_{-\lambda})
= \partial_1f(x_1,0).
\end{align*}
Since $(x_1,x_2) \mapsto f(x_1,x_2)-\lambda x_2$ is harmonic and bounded in $\mathbb{R}_+^2$, as a solution of~\eqref{PNlambda}, then $\partial_1f$ is bounded (see~\cite[Theorem~2.10]{GT01}). Hence, $u-v$ and $\partial_1f$ are both bounded harmonic functions which coincide on $\mathbb{R} \times \left\lbrace 0 \right\rbrace = \partial(\mathbb{R}_+^2) \setminus \left\lbrace \infty \right\rbrace$. By the Phragm\' en-Lindel\" of principle~\cite[Theorem~2.3.2]{Ransford}, the functions $u-v$ and $\partial_1f$ coincide in $\mathbb{R}_+^2$. This proves~\eqref{v-u}. Finally, in $\mathbb{R}_+^2$,
\begin{align*}
\partial_1u+\partial_1v
& = \frac{1}{2}\left( \partial_1 W_{f,\lambda} + \partial_1 W_{-f,-\lambda} \right)
= \frac{1}{2} \left( w_{f,\lambda}+w_{-f,-\lambda} \right)
= \partial_1f(\partial_2f-\lambda),
\end{align*}
which gives~\eqref{dr_1u+dr_1v}.
\end{proof}

\newpage

We now quote two statements from~\cite{AmickToland}.

\begin{theoreme}
\label{thmSolBO}
Solutions of~\eqref{BO} in $\overline{\mathbb{R}_+^2}$ are:
\begin{itemize}
\item[--] the constant function $u_0 \equiv 0$,
\item[--] for $\alpha \in [1,2)$, the functions
\begin{equation}
\label{solBOperiodic}
u_\alpha \colon (x_1,x_2) \mapsto \frac{2\sigma \Gamma_\alpha(x_2)}{\cos^2(\sigma x_1)+\Gamma_\alpha(x_2)^2\sin^2(\sigma x_1)}
\end{equation}
where
\begin{equation}
\label{deltaGammagamma}
\sigma=\frac{1}{2}\sqrt{\alpha(2-\alpha)}, \ \ \Gamma_\alpha(x_2)=\frac{\gamma+\tanh(\sigma x_2)}{1+\gamma \tanh(\sigma x_2)} \ \text{ and } \ \gamma=\frac{\alpha}{2\sigma},
\end{equation}
which are non-constant periodic functions of the variable $x_1$, and every translation of $u_\alpha$ in the $x_1$-direction,
\item[--] the function
\begin{equation}
\label{solBOnonperiodic}
u_2 \colon (x_1,x_2) \mapsto \frac{2(1+x_2)}{x_1^2+(1+x_2)^2}
\end{equation}
which is non-constant and non-periodic in $x_1$, and every translation of $u_2$ in the $x_1$-direction.
\end{itemize}
\end{theoreme}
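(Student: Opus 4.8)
The plan is to reproduce the complex-analytic classification of Amick and Toland~\cite{AmickToland}, which proceeds in three stages: reduce \eqref{BO} to a nonlocal first-order equation for the boundary trace, promote that equation to an algebraic ordinary differential equation by a rigidity argument, and integrate the resulting ODE.

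\emph{Reduction to the boundary.} A bounded harmonic function $u$ on $\mathbb{R}_+^2$ is the Poisson extension of its trace $g:=u(\cdot,0)$, which by the regularity required in \eqref{BO} lies in $C^1(\mathbb{R})\cap L^\infty(\mathbb{R})$. Since the Dirichlet-to-Neumann operator of the half-plane is $-(-\partial_{x_1}^2)^{1/2}=-H\,\partial_{x_1}$, with $H$ the Hilbert transform on $\mathbb{R}$, the boundary condition in \eqref{BO} is equivalent to
\begin{equation*}
H g' = g^2 - g \qquad\text{on }\mathbb{R},
\end{equation*}
and, conversely, any bounded $C^1$ solution of this equation produces a solution of \eqref{BO} by Poisson extension (interior smoothness being automatic by harmonicity). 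So it suffices to classify the bounded $C^1$ solutions $g$ of $Hg'=g^2-g$. The constant ones are $g\equiv0$ and $g\equiv1$, which give $u_0$ and the $\alpha=1$ member of the family \eqref{solBOperiodic}; from now on $g$ is assumed nonconstant.

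\emph{Complex-analytic rigidity -- the main obstacle.} Passing to the harmonic conjugate, put $G=u+iv$ on the upper half-plane with $v$ the Poisson extension of $Hg$, so that $G(\cdot,0)=g+iHg$ and the boundary relation reads $\partial_{x_1}(\operatorname{Im}G)=(\operatorname{Re}G)^2-\operatorname{Re}G$ along $\mathbb{R}$. The heart of~\cite{AmickToland} is to show that $G$ is real-analytic up to $\mathbb{R}$ and extends to a function meromorphic on all of $\mathbb{C}$ -- in fact a Möbius transformation of $z\mapsto e^{i\mu z}$ for some $\mu\in\mathbb{R}$, with $\mu=0$ admitted (i.e.\ a Möbius transformation of $z$ itself); equivalently, the boundary trace satisfies a first-order algebraic identity
\begin{equation*}
(g')^2 = Q(g),
\end{equation*}
with $Q$ a real polynomial of degree at most $4$ whose coefficients are pinned down by the asymptotic values of $G$ at infinity, which exist since $G$ is bounded. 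The ingredients are bootstrap regularity for the nonlinear Neumann problem (analyticity up to the boundary), a Phragmén--Lindelöf/Liouville-type argument to extend the relation from $\mathbb{R}$ into $\mathbb{R}_+^2$ and then to $\mathbb{C}$, and the particular quadratic structure of the Benjamin--Ono nonlinearity, which forces the degree bound and rules out essential singularities. I expect this to be the hard part: once the solution is known to live on a rational curve, everything downstream is bookkeeping, but controlling the behaviour at infinity is the delicate analysis of Amick and Toland.

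\emph{Integration.} It remains to solve $(g')^2=Q(g)$ under the constraints that $g$ is bounded and satisfies $Hg'=g^2-g$, which together reduce the free coefficients of $Q$ to a one-parameter family (up to $x_1$-translation). When $Q$ has a double real root and two simple real ones, the quadrature $\int dg/\sqrt{Q(g)}$ yields a function of $\tan(\sigma x_1)$, and identifying the roots via the normalisations in \eqref{deltaGammagamma} gives, after Poisson extension, the family \eqref{solBOperiodic}. The apparent restriction $\alpha\in[1,2)$ is not a loss: the map $\alpha\mapsto2-\alpha$ fixes $\sigma$ and sends $\gamma\mapsto1/\gamma$, which only shifts $u_\alpha$ by a half-period in $x_1$, so $[1,2)$ selects one representative per pair, with $\alpha=1$ the self-paired constant $u\equiv1$. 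In the degenerate regime $\alpha\to2$ ($\sigma\to0$ and $Q$ acquiring a triple root) the quadrature gives $g(x_1)=2/(1+x_1^2)$ up to translation, whose Poisson extension is \eqref{solBOnonperiodic}. Translation invariance of $Hg'=g^2-g$ in $x_1$ accounts for all translated copies, and a direct substitution confirms that every function listed in the statement solves \eqref{BO}; this establishes the converse inclusion and completes the classification.
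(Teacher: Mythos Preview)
The paper does not prove this theorem: it is explicitly introduced with ``We now quote two statements from~\cite{AmickToland}'' and is stated without proof. So there is no argument in the paper to compare your proposal against.

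Your outline is a reasonable high-level sketch of the Amick--Toland strategy, and the reduction to the boundary equation $Hg'=g^2-g$ and the final integration step are accurate. But the middle step --- the complex-analytic rigidity argument --- is the entire content of the result, and you have described it rather than done it: phrases like ``I expect this to be the hard part'' and ``the heart of~\cite{AmickToland} is to show that\ldots'' are not a proof. In particular, the claim that the boundary trace satisfies a polynomial relation $(g')^2=Q(g)$ with $\deg Q\le 4$ is exactly what requires work (analytic continuation across the boundary, control at infinity, exclusion of essential singularities), and you have asserted the conclusion without supplying any of the mechanism. If this were submitted as a proof rather than a citation, the rigidity step would be a genuine gap; as a summary of why one defers to~\cite{AmickToland}, it is fine, and matches what the paper itself does.
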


\begin{remarque}
The solution $u_\alpha$ given in~\eqref{solBOperiodic} is not well-defined for $\alpha=2$, because in this case $\sigma=0$. However, $(u_\alpha)_{\alpha \in [1,2)}$ converges pointwise to $u_2$ when $\alpha \rightarrow 2$.
\end{remarque}

\begin{proposition}
\label{properties_solBO}
Solutions of~\eqref{BO} have the following properties:
\begin{itemize}
\item[i)] For every $\alpha \in [1,2]$,
\begin{equation}
\tag{P1$_{\mathrm{BO}}$}
\label{PBO1}
u_\alpha>0.
\end{equation}
\item[ii)] For $\alpha \in (1,2)$, for every $x_2>0$,
\begin{equation}
\tag{P2$_{\mathrm{BO}}$}
\label{PBO2}
u_\alpha(\cdot,x_2) \text{ is } \frac{\pi}{\sigma}\text{-periodic and } u_\alpha(0,0)=\alpha=\max\limits_\mathbb{R} u_\alpha(\cdot,x_2).
\end{equation}
\item[iii)] For every $\alpha \in (1,2]$,
\begin{equation}
\tag{P3$_{\mathrm{BO}}$}
\label{PBO3}
\partial_1u_\alpha(\cdot,0)^2=\alpha(\alpha-2) u_\alpha(\cdot,0)^2+2u_\alpha(\cdot,0)^3-u_\alpha(\cdot,0)^4 \text{ in } \mathbb{R}.
\end{equation}
\item[iv)] For every $\alpha \in (1,2]$,
\begin{equation}
\tag{P4$_{\mathrm{BO}}$}
\label{PBO4}
\sup\limits_\mathbb{R} u_\alpha(\cdot,0)+\inf\limits_\mathbb{R} u_\alpha(\cdot,0)=2.
\end{equation}
\end{itemize}
\end{proposition}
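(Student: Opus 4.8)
The plan is to verify each of the four properties directly on the explicit list of solutions of~\eqref{BO} given in Theorem~\ref{thmSolBO} — the periodic functions $u_\alpha$, $\alpha\in[1,2)$, from~\eqref{solBOperiodic}, and the non-periodic $u_2$ from~\eqref{solBOnonperiodic} — using a handful of elementary identities among the parameters. First I would record, from~\eqref{deltaGammagamma}, that $\gamma=\sqrt{\alpha/(2-\alpha)}$, so that
\begin{equation*}
\gamma^2-1=\frac{2(\alpha-1)}{2-\alpha},\qquad
\gamma^2+1=\frac{2}{2-\alpha},\qquad
4\sigma^2=\alpha(2-\alpha),
\end{equation*}
hence $\alpha(\alpha-2)=(\alpha-1)^2-1$, $\ \sigma^2(\gamma^2-1)=\frac{1}{2}\alpha(\alpha-1)$, $\ (\alpha-2)(\gamma^2-1)=-2(\alpha-1)$ and $\alpha/\gamma^2=2-\alpha$. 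I would also note that, for $x_2\geqslant0$, writing $t=\tanh(\sigma x_2)\in[0,1)$ one has $\Gamma_\alpha(x_2)=\frac{\gamma+t}{1+\gamma t}>0$ with $\Gamma_\alpha(0)=\gamma$, and that $\frac{\mathrm{d}}{\mathrm{d}t}\frac{\gamma+t}{1+\gamma t}=\frac{1-\gamma^2}{(1+\gamma t)^2}$; thus $x_2\mapsto\Gamma_\alpha(x_2)$ is nonincreasing on $[0,+\infty)$ whenever $\gamma\geqslant1$, i.e.\ $\alpha\geqslant1$, with limit $1$ at $+\infty$, so in particular $\Gamma_\alpha(x_2)\geqslant1$ for all $x_2\geqslant0$ when $\alpha\in(1,2)$.

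For~\eqref{PBO1}: the denominator of $u_\alpha$ in~\eqref{solBOperiodic} is $\cos^2(\sigma x_1)+\Gamma_\alpha(x_2)^2\sin^2(\sigma x_1)$, a convex combination of $1$ and $\Gamma_\alpha(x_2)^2$, both positive, hence it is positive, while the numerator $2\sigma\Gamma_\alpha(x_2)$ is positive as well; and $u_2=2(1+x_2)/(x_1^2+(1+x_2)^2)>0$ on $\overline{\mathbb R_+^2}$. For~\eqref{PBO2}: $\frac{\pi}{\sigma}$-periodicity in $x_1$ is immediate from that of $\cos^2(\sigma\,\cdot)$ and $\sin^2(\sigma\,\cdot)$; rewriting the denominator as $1+(\Gamma_\alpha(x_2)^2-1)\sin^2(\sigma x_1)\geqslant1$ (using $\Gamma_\alpha(x_2)\geqslant1$) gives $u_\alpha(x_1,x_2)\leqslant 2\sigma\Gamma_\alpha(x_2)\leqslant 2\sigma\Gamma_\alpha(0)=2\sigma\gamma=\alpha$, with equality exactly on $\frac{\pi}{\sigma}\mathbb{Z}\times\{0\}$; in particular $u_\alpha(0,0)=\alpha$, which is the maximum of $u_\alpha$ over $\overline{\mathbb R_+^2}$.

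The main computation is~\eqref{PBO3}. For $\alpha\in(1,2)$ I would set $U(x_1):=u_\alpha(x_1,0)=\alpha/D(x_1)$ with $D(x_1):=1+(\gamma^2-1)\sin^2(\sigma x_1)$; then $D'=\sigma(\gamma^2-1)\sin(2\sigma x_1)$ and
\begin{equation*}
(U')^2=\frac{\alpha^2(D')^2}{D^4}=\frac{4\alpha^2\sigma^2(\gamma^2-1)^2\,\sin^2(\sigma x_1)\cos^2(\sigma x_1)}{D^4}.
\end{equation*}
On the other hand, $\alpha(\alpha-2)=(\alpha-1)^2-1$ lets one factor
\begin{equation*}
\alpha(\alpha-2)U^2+2U^3-U^4=U^2\bigl((\alpha-1)^2-(U-1)^2\bigr)=U^2(\alpha-U)(\alpha-2+U),
\end{equation*}
and substituting $U=\alpha/D$ and $(\alpha-2)(\gamma^2-1)=-2(\alpha-1)$ gives $\alpha-U=\alpha(\gamma^2-1)\sin^2(\sigma x_1)/D$ and $\alpha-2+U=2(\alpha-1)\cos^2(\sigma x_1)/D$, so this right-hand side equals $2\alpha^3(\alpha-1)(\gamma^2-1)\sin^2(\sigma x_1)\cos^2(\sigma x_1)/D^4$; the two expressions then coincide by the identity $2\sigma^2(\gamma^2-1)=\alpha(\alpha-1)$. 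For $\alpha=2$, \eqref{PBO3} reads $(U')^2=2U^3-U^4$ with $U(x_1)=2/(1+x_1^2)$, which is a one-line check.

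Finally, for~\eqref{PBO4}: when $\alpha\in(1,2)$, $D(x_1)$ ranges over $[1,\gamma^2]$, so $U=\alpha/D$ ranges over $[\alpha/\gamma^2,\alpha]=[2-\alpha,\alpha]$, whence $\sup_{\mathbb R}U+\inf_{\mathbb R}U=2$; when $\alpha=2$, $\sup_{\mathbb R}2/(1+x_1^2)+\inf_{\mathbb R}2/(1+x_1^2)=2+0=2$. The only step requiring any care is the algebraic verification of~\eqref{PBO3}, where one must use the precise relations $2\sigma^2(\gamma^2-1)=\alpha(\alpha-1)$ and $(\alpha-2)(\gamma^2-1)=-2(\alpha-1)$; all the rest is routine substitution. (Alternatively \eqref{PBO3} can be deduced from the structure of~\eqref{BO} by passing to a harmonic conjugate of $u_\alpha$, but the direct route is shorter.)
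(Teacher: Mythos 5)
Your verification is correct, and it is in fact more than the paper provides: the paper does not prove Proposition~\ref{properties_solBO} at all, it simply quotes it (together with Theorem~\ref{thmSolBO}) from Amick--Toland, so your direct check from the explicit formulas~\eqref{solBOperiodic}--\eqref{solBOnonperiodic} is a genuinely self-contained alternative. The parameter identities you use are right ($\gamma=\sqrt{\alpha/(2-\alpha)}$, $2\sigma^2(\gamma^2-1)=\alpha(\alpha-1)$, $(\alpha-2)(\gamma^2-1)=-2(\alpha-1)$, $\alpha/\gamma^2=2-\alpha$), the factorization $\alpha(\alpha-2)U^2+2U^3-U^4=U^2(\alpha-U)(\alpha-2+U)$ together with $\alpha-U=\alpha(\gamma^2-1)\sin^2(\sigma x_1)/D$ and $\alpha-2+U=2(\alpha-1)\cos^2(\sigma x_1)/D$ does give~\eqref{PBO3}, the $\alpha=2$ case is the one-line check you describe, and~\eqref{PBO1},~\eqref{PBO4} follow as you say (including $\alpha=1$, where $u_1\equiv1$). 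One point deserves an explicit comment in your write-up: for~\eqref{PBO2} what you actually prove is that $u_\alpha\leqslant\alpha$ on $\overline{\mathbb{R}_+^2}$ with equality exactly on $\frac{\pi}{\sigma}\mathbb{Z}\times\{0\}$, i.e. $\alpha=u_\alpha(0,0)=\max_{\overline{\mathbb{R}_+^2}}u_\alpha$. The displayed statement, read literally as ``for every $x_2>0$, $\max_{\mathbb{R}}u_\alpha(\cdot,x_2)=\alpha$'', is inconsistent with your own computation, since $\max_{\mathbb{R}}u_\alpha(\cdot,x_2)=2\sigma\Gamma_\alpha(x_2)<2\sigma\gamma=\alpha$ for $x_2>0$ (as $\Gamma_\alpha$ is strictly decreasing when $\gamma>1$); the intended reading is therefore the one you establish (the maximum over the closed half-plane, equivalently over the boundary line $x_2=0$), and you should state that interpretation explicitly rather than leave the discrepancy silent. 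With that clarification, the proposal stands as a complete and correct proof of the quoted properties.
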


\begin{flushleft}
\textbf{Finding solutions of \eqref{PNlambda}.}
\end{flushleft}

We now prove Theorem~\ref{thmSolPNlambda} by using Theorem~\ref{thmPNlambdaBO}, Theorem~\ref{thmSolBO} and the properties of the solutions of~\eqref{BO} given in Proposition~\ref{properties_solBO}.

Let $\lambda \in \mathbb{R}$ and $f$ be a solution of~\eqref{PNlambda}. By~Theorem \ref{thmPNlambdaBO}, there exist two solutions $u$ and $v$ of~\eqref{BO} that are related to $f$. Since solutions of~\eqref{BO} are entirely known thanks to Theorem~\ref{thmSolBO}, we proceed by testing all possible cases. Each case consists in choosing $v$ (the possible choices being $u_0$, $u_1$, $u_2$ and $u_\alpha$, with $\alpha \in (1,2)$, from Theorem~\ref{thmSolBO}), and then testing each choice for $u$ (if needed). For each case, we will obtain either a candidate function $f$ to be a solution of~\eqref{PNlambda}, or a contradiction, that means that case cannot occur. Note that, because of~\eqref{v-u}, we also have to consider the candidate functions $f$ obtained by inverting $u$ and $v$.

\newpage

We need the following lemmas for the first two cases.

\begin{lemme}
\label{lemmeintegrabilite}
For every $\alpha \in \left\lbrace 0 \right\rbrace \cup [1,2]$, $u_\alpha(\cdot,0)$ is integrable on $\mathbb{R}$ if and only if $\alpha \in \left\lbrace 0,2 \right\rbrace$.
\end{lemme}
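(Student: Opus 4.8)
The plan is to read the conclusion off directly from the explicit list of solutions of~\eqref{BO} given in Theorem~\ref{thmSolBO}, handling the two values $\alpha \in \{0,2\}$ and the range $\alpha \in [1,2)$ separately. For $\alpha \in \{0,2\}$ I would establish integrability by an elementary computation of the boundary trace $u_\alpha(\cdot,0)$; for $\alpha \in [1,2)$ I would show that $u_\alpha(\cdot,0)$ is bounded below by a strictly positive constant and is periodic in $x_1$, which immediately rules out integrability over $\mathbb{R}$.

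The cases $\alpha \in \{0,2\}$ are immediate. When $\alpha = 0$ we have $u_0 \equiv 0$, so $u_0(\cdot,0)$ is (trivially) integrable. When $\alpha = 2$, evaluating~\eqref{solBOnonperiodic} at $x_2 = 0$ gives $u_2(x_1,0) = \frac{2}{1+x_1^2}$, whose integral over $\mathbb{R}$ equals $2\pi < +\infty$; hence $u_2(\cdot,0)$ is integrable.

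For $\alpha \in [1,2)$ I would evaluate~\eqref{solBOperiodic} at $x_2 = 0$, using $\tanh 0 = 0$ to get $\Gamma_\alpha(0) = \gamma = \frac{\alpha}{2\sigma}$, so that $2\sigma\Gamma_\alpha(0) = \alpha$ and
\[
u_\alpha(x_1,0) = \frac{\alpha}{\cos^2(\sigma x_1) + \gamma^2 \sin^2(\sigma x_1)}.
\]
Since $\alpha \in [1,2)$ forces $\sigma = \tfrac{1}{2}\sqrt{\alpha(2-\alpha)} > 0$, the right-hand side is a $\tfrac{\pi}{\sigma}$-periodic function of $x_1$; moreover its denominator is at most $\max(1,\gamma^2)$, so $u_\alpha(x_1,0) \geq \frac{\alpha}{\max(1,\gamma^2)} > 0$ for every $x_1$ (consistently with~\eqref{PBO1}). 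Consequently $\int_{-R}^{R} u_\alpha(x_1,0)\,\mathrm{d}x_1 \to +\infty$ as $R \to +\infty$, so $u_\alpha(\cdot,0)$ is not integrable, and the three cases together give the claimed equivalence. I do not expect any genuine obstacle here: the only point that needs a little care is justifying the strictly positive lower bound for $u_\alpha(\cdot,0)$ when $\alpha \in [1,2)$, which is precisely what turns the periodicity into non-integrability.
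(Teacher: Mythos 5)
Your proposal is correct and follows essentially the same route as the paper: evaluate the explicit boundary traces, note $u_0\equiv 0$ and $u_2(x_1,0)=\frac{2}{1+x_1^2}$ are integrable, and bound $u_\alpha(\cdot,0)$ below by a positive constant (the paper uses $\frac{2\sigma\gamma}{1+\gamma^2}$, you use $\frac{\alpha}{\max(1,\gamma^2)}$, and the paper treats $u_1\equiv 1$ separately while your formula covers it) to exclude integrability for $\alpha\in[1,2)$. No gaps.
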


\begin{proof}
It suffices to prove that $u_0(\cdot,0)$ and $u_2(\cdot,0)$ are integrable on $\mathbb{R}$ and that the other options for $u_\alpha(\cdot,0)$ are not integrable on $\mathbb{R}$.
Since $u_0 \equiv 0$ and $u_1 \equiv 1$, it is clear that $u_0(\cdot,0)$ is integrable on $\mathbb{R}$ and $u_1(\cdot,0)$ is not integrable on $\mathbb{R}$. For every $x_1 \in \mathbb{R}$,
\begin{equation*}
u_2(x_1,0)=\frac{2}{x_1^2+1},
\end{equation*}
thus $u_2(\cdot,0)$ is clearly integrable on $\mathbb{R}$. For every $\alpha \in (1,2)$, for every $x_1 \in \mathbb{R}$,
\begin{equation*}
u_\alpha(x_1,0)=\frac{2\sigma \gamma}{\cos^2(\sigma x_1)+\gamma^2\sin^2(\sigma x_1)} \geqslant \frac{2\sigma \gamma}{1+\gamma^2},
\end{equation*}
thus $u_\alpha(\cdot,0)$ is not integrable on $\mathbb{R}$.
\end{proof}

\begin{lemme}
\label{lemmeintegraleualpha}
For every $\alpha \in (1,2)$ and $x_2 \geqslant 0$,
\begin{equation}
\int_0^{\pi/\sigma} u_\alpha(x_1,x_2) \ \mathrm{d} x_1=2\pi.
\end{equation}
\end{lemme}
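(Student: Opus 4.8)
The plan is to reduce the integral to a classical one via the rescaling $t=\sigma x_1$, then use the $\pi$-periodicity of the integrand and the substitution $w=\tan t$.

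First I would record that $\Gamma_\alpha(x_2)>0$ (in fact $\geqslant 1$) for every $x_2\geqslant 0$: since $\gamma=\frac{\alpha}{2\sigma}=\sqrt{\frac{\alpha}{2-\alpha}}>1$ when $\alpha\in(1,2)$, and $\tanh(\sigma x_2)\in[0,1)$, both the numerator $\gamma+\tanh(\sigma x_2)$ and the denominator $1+\gamma\tanh(\sigma x_2)$ of $\Gamma_\alpha(x_2)$ are positive; moreover $\gamma+\tanh(\sigma x_2)\geqslant 1+\gamma\tanh(\sigma x_2)$ because $(\gamma-1)(1-\tanh(\sigma x_2))\geqslant 0$. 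Writing $\Gamma:=\Gamma_\alpha(x_2)$ and substituting $t=\sigma x_1$ (so $\mathrm{d}x_1=\mathrm{d}t/\sigma$), one obtains
\begin{equation*}
\int_0^{\pi/\sigma} u_\alpha(x_1,x_2)\,\mathrm{d}x_1=\int_0^\pi \frac{2\Gamma}{\cos^2 t+\Gamma^2\sin^2 t}\,\mathrm{d}t.
\end{equation*}

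Next I would evaluate $J:=\int_0^\pi \frac{\mathrm{d}t}{\cos^2 t+\Gamma^2\sin^2 t}$. The integrand is $\pi$-periodic and invariant under $t\mapsto\pi-t$, so $J=2\int_0^{\pi/2}\frac{\mathrm{d}t}{\cos^2 t+\Gamma^2\sin^2 t}$. On $[0,\pi/2)$ the change of variable $w=\tan t$, which gives $\mathrm{d}w=(1+w^2)\,\mathrm{d}t$ and $\cos^2 t+\Gamma^2\sin^2 t=\cos^2 t\,(1+\Gamma^2 w^2)$, yields
\begin{equation*}
J=2\int_0^{+\infty}\frac{\mathrm{d}w}{1+\Gamma^2 w^2}=\frac{2}{\Gamma}\lim_{R\to+\infty}\arctan(\Gamma R)=\frac{\pi}{\Gamma}.
\end{equation*}
Hence $\int_0^{\pi/\sigma}u_\alpha(x_1,x_2)\,\mathrm{d}x_1=2\Gamma\cdot\frac{\pi}{\Gamma}=2\pi$, independently of $\alpha\in(1,2)$ and $x_2\geqslant 0$.

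There is no serious obstacle in this computation; the only point deserving a word is the singularity of $\tan$ at $t=\pi/2$, which is harmless because $\cos^2 t+\Gamma^2\sin^2 t\geqslant\min(1,\Gamma^2)>0$ makes the original integrand bounded and continuous, so the resulting improper integral in $w$ converges and its value $\pi/\Gamma$ agrees with splitting $[0,\pi]$ at $\pi/2$ and invoking the reflection symmetry $t\mapsto\pi-t$.
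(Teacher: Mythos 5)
Your proof is correct and follows essentially the same route as the paper: after rescaling $t=\sigma x_1$, both arguments evaluate the elementary integral $\int_0^\pi \frac{\mathrm{d}t}{\cos^2 t+\Gamma^2\sin^2 t}=\frac{\pi}{\Gamma}$ via an arctangent antiderivative. The only cosmetic difference is that the paper writes the integrand as $-2\,\partial_1\arctan\left(\cot(x_1)/\Gamma_\alpha(x_2)\right)$ and handles the endpoints by a limiting (dominated convergence) argument over $(\varepsilon,\pi-\varepsilon)$, whereas you halve the interval by the symmetry $t\mapsto\pi-t$ and substitute $w=\tan t$, which is an equally valid way to treat the same singularity.
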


\begin{proof}
Let $\alpha \in (1,2)$ and $x_2 \geqslant 0$ be fixed. By change of variable and by the dominated convergence theorem,
\begin{align*}
\int_0^{\pi/\sigma} u_\alpha(x_1,x_2) \ \mathrm{d} x_1
& = \lim\limits_{\varepsilon \rightarrow 0} \int_\varepsilon^{\pi-\varepsilon} \frac{2\Gamma_\alpha(x_2)}{\cos^2(x_1)+\Gamma_\alpha(x_2)^2\sin^2(x_1)} \ \mathrm{d} x_1.
\end{align*}
Moreover, $\Gamma_\alpha(x_2)>0$ by \eqref{deltaGammagamma}, and for every $\varepsilon>0$,
\begin{align*}
\int_\varepsilon^{\pi-\varepsilon} \frac{2\Gamma_\alpha(x_2)}{\cos^2(x_1)+\Gamma_\alpha(x_2)^2\sin^2(x_1)} \ \mathrm{d} x_1
& = -2\int_\varepsilon^{\pi-\varepsilon} \frac{g_{\alpha,x_2}'(x_1)}{1+g_{\alpha,x_2}(x_1)^2} \ \mathrm{d} x_1
\end{align*}
where $g_{\alpha,x_2}(x_1)=\frac{\cot(x_1)}{\Gamma_\alpha(x_2)}$, hence
\begin{align*}
& \int_\varepsilon^{\pi-\varepsilon} \frac{2\Gamma_\alpha(x_2)}{\cos^2(x_1)+\Gamma_\alpha(x_2)^2\sin^2(x_1)} \ \mathrm{d} x_1
\\
& \qquad \qquad = -2 \left[ \arctan \left( \frac{\cot(\pi-\varepsilon)}{\Gamma_\alpha(x_2)} \right)-\arctan \left( \frac{\cot(\varepsilon)}{\Gamma_\alpha(x_2)} \right) \right].
\end{align*}
Taking the limits when $\varepsilon$ tends to zero, we get
\begin{align*}
\int_0^{\pi/\sigma} u_\alpha(x_1,x_2) \ \mathrm{d} x_1
& = 2\pi.
\end{align*}
\end{proof}

\newpage

We come back to the proof of Theorem~\ref{thmSolPNlambda}.

\begin{flushleft}
\textbf{Case~1: $v=u_0$.}
\end{flushleft}
By~\eqref{v-u}, $\partial_1f=u$ in $\mathbb{R}_+^2$. Note that this equality extends to $\overline{\mathbb{R}_+^2}$ by continuity of $\partial_1f$ and $v$.
\vspace{.1cm}

-- Subcase~1: $u=u_0$.
\vspace{.1cm}

\noindent
Then $\partial_1f \equiv 0$ in $\overline{\mathbb{R}_+^2}$ and by \eqref{u0+v0}, $\cos f(\cdot,0) \equiv -1$ in $\mathbb{R}$, i.e. there exists an integer $n \in \mathbb{Z}$ such \linebreak that $f(\cdot,0)=(2n+1)\pi$ in $\mathbb{R}$. Since $(x_1,x_2) \mapsto f(x_1,x_2)-\lambda x_2$ and $(x_1,x_2) \mapsto (2n+1)\pi$ are bounded harmonic functions in $\mathbb{R}_+^2$ which coincide on $\mathbb{R} \times \left\lbrace 0 \right\rbrace = \partial(\mathbb{R}_+^2) \setminus \left\lbrace \infty \right\rbrace$, they coincide in $\overline{\mathbb{R}_+^2}$ by the Phragm\' en-Lindel\" of principle~\cite[Theorem~2.3.2]{Ransford}. As a consequence, for every $(x_1,x_2) \in \overline{\mathbb{R}_+^2}$,
\begin{equation*}
f(x_1,x_2)=(2n+1)\pi+\lambda x_2.
\end{equation*}

-- Subcase~2: $u=u_1$.
\vspace{.1cm}

\noindent
Then $\partial_1f \equiv 1$ in $\overline{\mathbb{R}_+^2}$, so that $f(\cdot,x_2)$ is not bounded in $\mathbb{R}$ for every $x_2>0$. This contradicts the fact that, for every fixed $x_2>0$, $f(\cdot ,x_2)=\left( f(\cdot ,x_2)-\lambda x_2 \right) + \lambda x_2$ is a bounded function since $f$ is a solution of~\eqref{PNlambda}. Hence, the configuration $(u,v)=(u_1,u_0)$ is not possible.
\vspace{.1cm}

-- Subcase~3: $u=u_2$.
\vspace{.1cm}

\noindent
Up to a translation with respect to the variable $x_1$, we assume that for every $x_1 \in \mathbb{R}$,
\begin{equation*}
\partial_1f(x_1,0)=\frac{2}{x_1^2+1},
\end{equation*}
using~\eqref{solBOnonperiodic} in Theorem~\ref{thmSolBO}, from which it follows
\begin{equation*}
f(x_1,0)=f(0,0)+2\arctan(x_1).
\end{equation*}
Furthermore, by~\eqref{u0+v0}, we have $\cos f(0,0)=u_2(0,0)-1=1$ and thus, there exists an integer $n \in \mathbb{Z}$ such that $f(0,0)=2n\pi$. Since $(x_1,x_2) \mapsto f(x_1,x_2)-\lambda x_2$ and $(x_1,x_2) \mapsto 2n\pi + 2 \arctan \left( \frac{x_1}{1+x_2} \right)$ are bounded harmonic functions in $\mathbb{R}_+^2$ which coincide on $\mathbb{R} \times \left\lbrace 0 \right\rbrace = \partial(\mathbb{R}_+^2) \setminus \left\lbrace \infty \right\rbrace$, they coincide in $\overline{\mathbb{R}_+^2}$ by the Phragm\' en-Lindel\" of principle. As a consequence, for every $(x_1,x_2) \in \overline{\mathbb{R}_+^2}$,
\begin{equation*}
f(x_1,x_2)=2n\pi+2\arctan \left( \frac{x_1}{1+x_2} \right)+\lambda x_2.
\end{equation*}

-- Subcase~4: $u=u_\alpha$ with $\alpha \in (1,2)$.
\vspace{.1cm}

\noindent
Then $u>0$ by~\eqref{PBO1} and, since $u=\partial_1f$ and $f$ is bounded on $\mathbb{R} \times \left\lbrace 0 \right\rbrace$, $u$ is integrable on $\mathbb{R} \times \left\lbrace 0 \right\rbrace$. This is a contradiction with Lemma \ref{lemmeintegrabilite}, thus the configuration $(u,v)=(u_\alpha,u_0)$ with $\alpha \in (1,2)$ is not possible.

\begin{flushleft}
\textbf{Case~2: $v=u_1$.}
\end{flushleft}
By~\eqref{v-u}, $\partial_1f=u-1$ in $\mathbb{R}_+^2$. Note that this equality extends to $\overline{\mathbb{R}_+^2}$ by continuity of $\partial_1f$ and $u$.
\vspace{.1cm}

-- Subcase~1: $u=u_0$.
\vspace{.1cm}

\noindent
Then $\partial_1f \equiv -1$ in $\overline{\mathbb{R}_+^2}$, and we get a contradiction as in the Subcase~2 of Case~1. Hence, the configuration $(u,v)=(u_0,u_1)$ is not possible.
\vspace{.1cm}

-- Subcase~2: $u=u_1$.
\vspace{.1cm}

\noindent
Then $\partial_1f \equiv 0$ in $\overline{\mathbb{R}_+^2}$ and by~\eqref{u0+v0}, $\cos f(\cdot,0) \equiv 1$ in $\mathbb{R}$, i.e. there exists an integer $n \in \mathbb{Z}$ such \linebreak that $f(\cdot,0)=2n\pi$ in $\mathbb{R}$. Proceeding as in the Subcase~1 of Case~1, we get, for every $(x_1,x_2) \in \overline{\mathbb{R}_+^2}$,
\begin{equation*}
f(x_1,x_2)=2n\pi+\lambda x_2.
\end{equation*}

\newpage

-- Subcase~3: $u=u_2$.
\vspace{.1cm}

\noindent
Then $u>0$ by~\eqref{PBO1} and, since $u=\partial_1f+1$ and $f$ is bounded on $\mathbb{R} \times \left\lbrace 0 \right\rbrace$, $u$ is not integrable \linebreak on $\mathbb{R} \times \left\lbrace 0 \right\rbrace$. This is a contradiction with Lemma~\ref{lemmeintegrabilite}, thus the configuration $(u,v)=(u_2,u_1)$ is not possible.
\vspace{.1cm}

-- Subcase~4: $u=u_\alpha$ with $\alpha \in (1,2)$.
\vspace{.1cm}

\noindent
Let $\sigma$ be given by~\eqref{deltaGammagamma}. Since $u=\partial_1f+1$ in $\overline{\mathbb{R}_+^2}$, we have
\begin{align*}
\int_0^{N\pi/\sigma} u(x_1,x_2) \ \mathrm{d} x_1
& = \int_0^{N\pi/\sigma} (\partial_1f(x_1,x_2)+1) \ \mathrm{d} x_1
\\ & = f \left( \frac{N\pi}{\sigma},x_2 \right) - f \left( 0,x_2 \right) + \frac{N\pi}{\sigma}
\end{align*}
for every $x_2>0$ and $N \in \mathbb{N}^\ast$. By Lemma~\ref{lemmeintegraleualpha} and since $u_\alpha$ is $\frac{\pi}{\sigma}$-periodic in $x_1$ by~\eqref{PBO2}, we get
\begin{align*}
2\pi N = N \int_0^{\pi/\sigma} u(x_1,x_2) \ \mathrm{d} x_1
& = f \left( \frac{N\pi}{\sigma},x_2 \right)-f(0,x_2)+\frac{N\pi}{\sigma},
\end{align*}
i.e.
\begin{align*}
2\pi-\frac{\pi}{\sigma} = \frac{1}{N} \left[ \left( f \left( \frac{N\pi}{\sigma},x_2 \right)-\lambda x_2 \right) - \left( f(0,x_2) -\lambda x_2 \right) \right]
\end{align*}
for every $x_2>0$ and $N \in \mathbb{N}^\ast$. But $f(\cdot,x_2)-\lambda x_2$ is bounded for every $x_2>0$ so that, letting $N$ tend to $+\infty$, we get
\begin{equation*}
2\pi=\frac{\pi}{\sigma}
\end{equation*}
that is to say $\alpha=1$, thanks to~\eqref{deltaGammagamma}. This is a contradiction, since we assumed $\alpha \in (1,2)$. Hence, the configuration $(u,v)=(u_\alpha,u_1)$ with $\alpha \in (1,2)$ is not possible.

\begin{flushleft}
\textbf{Case~3: $v=u_2$ (or a translation of $u_2$ in the variable $x_1$).}
\end{flushleft}
Up to a translation with respect to the variable $x_1$, we assume that for every $x_1 \in \mathbb{R}$,
\begin{equation*}
v(x_1,0)=\frac{2}{x_1^2+1}
\end{equation*}
using~\eqref{solBOnonperiodic} in Theorem~\ref{thmSolBO}. We clearly have $u(0,0)=2$. By~\eqref{u0+v0}, we have
\begin{equation*}
u(\cdot,0)=1+\cos f(\cdot,0)-v(\cdot,0) \leqslant 2-v(\cdot,0) \ \ \text{ in } \mathbb{R}.
\end{equation*}
In particular, $u(0,0) \leqslant 0$ and then, by~\eqref{PBO1}, we necessarily have $u = u_0$. By~\eqref{v-u} and using the odd symmetry of the problem~\eqref{PNlambda} as explained in Remark~\ref{remPNlambda_odd}, we can proceed similarly to the Subcase~3 of Case~1. We get, for every $(x_1,x_2) \in \overline{\mathbb{R}_+^2}$,
\begin{equation*}
f(x_1,x_2)=2n\pi-2\arctan \left( \frac{x_1}{1+x_2} \right)+\lambda x_2.
\end{equation*}

\newpage

\begin{flushleft}
\textbf{Case~4: $v=u_\alpha$ (or a translation of $u_\alpha$ in the variable $x_1$) with $\alpha \in (1,2)$.}
\end{flushleft}
Let $\alpha \in (1,2)$. Up to a translation with respect to the variable $x_1$, we assume that
\begin{equation*}
v(x_1,x_2)=\frac{2\sigma \Gamma_\alpha(x_2)}{\cos^2(\sigma x_1)+\Gamma_\alpha(x_2)^2\sin^2(\sigma x_1)}
\end{equation*}
for every $(x_1,x_2) \in \overline{\mathbb{R}_+^2}$, using~\eqref{solBOperiodic} in Theorem~\ref{thmSolBO}. By~\eqref{v-u} and using the odd symmetry of the problem~\eqref{PNlambda} as explained in Remark~\ref{remPNlambda_odd}, we can exclude the three configurations $(u_0,u_\alpha)$, $(u_1,u_\alpha)$ and $(u_2,u_\alpha)$. Indeed, we can proceed similarly to the Subcase~4 of Case~1 (for $u=u_0$), the Subcase~4 of Case~2 (for $u=u_1$) and the Case~3 (for $u=u_2$). Hence, it remains to consider the case $u=u_{\alpha'}$ for some $\alpha' \in (1,2)$.

\begin{lemme}
\label{lemtvi}
There exists $x_0 \in \mathbb{R}$ such that $\partial_1v(x_0,0) \neq 0$ and
\begin{equation}
\label{lienuv}
v(x_0-x_1,0)=u(x_0+x_1,0)
\end{equation}
for every $x_1 \in \mathbb{R}$.
\end{lemme}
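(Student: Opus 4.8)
The plan is to push everything to the boundary line $\mathbb{R}\times\{0\}$. Write $U(x_1):=u(x_1,0)$, $V(x_1):=v(x_1,0)$ and $F(x_1):=f(x_1,0)$, which are smooth functions of $x_1\in\mathbb{R}$; extending~\eqref{v-u} and~\eqref{u0+v0} to the boundary by continuity gives $F'=U-V$ and $U+V=1+\cos F$ on $\mathbb{R}$, and since $f$ solves~\eqref{PNlambda} the function $F$ is bounded on $\mathbb{R}$. Recall also that $u=u_{\alpha'}$ and $v=u_\alpha$ up to $x_1$-translations with $\alpha,\alpha'\in(1,2)$, that $v$ is normalized so that $V(x_1)=\frac{2\sigma\gamma}{\cos^2(\sigma x_1)+\gamma^2\sin^2(\sigma x_1)}$ is even and $\frac{\pi}{\sigma}$-periodic, and that $U$ and $V$ are positive by~\eqref{PBO1}, non-constant, and satisfy the first-order identity~\eqref{PBO3}.

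First I would use the intermediate value theorem to produce a base point $x_0$ with $F'(x_0)=0$. Indeed $F'=U-V$ cannot keep a constant sign on $\mathbb{R}$: otherwise $F$ would be monotone, and since $U$ and $V$ have the non-zero period-averages $2\sigma'$ and $2\sigma$ furnished by Lemma~\ref{lemmeintegraleualpha}, the relation $F(T)-F(0)=\int_0^T(U-V)$ would force either $F$ unbounded or $U\equiv V$; but $U\equiv V$ together with $U+V=1+\cos F$ would make $U$ constant, contradicting $\alpha'\in(1,2)$. By continuity of $F'$ there is thus $x_0\in\mathbb{R}$ with $U(x_0)=V(x_0)=:y_\ast>0$. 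Differentiating $U+V=1+\cos F$ at $x_0$ gives $U'(x_0)+V'(x_0)=-\sin(F(x_0))F'(x_0)=0$, hence $U'(x_0)=-V'(x_0)$.

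Next I would pin down $\alpha'=\alpha$ and deduce the reflection identity. Evaluating~\eqref{PBO3} for $u=u_{\alpha'}$ and for $v=u_\alpha$ at $x_0$, and using $U(x_0)=V(x_0)=y_\ast$ together with $U'(x_0)^2=V'(x_0)^2$, subtraction gives $\alpha'(\alpha'-2)y_\ast^2=\alpha(\alpha-2)y_\ast^2$; since $y_\ast>0$ and $t\mapsto t(t-2)=(t-1)^2-1$ is strictly increasing on $(1,2)$, this forces $\alpha'=\alpha$. Therefore $U$ and $\widetilde V$, where $\widetilde V(x_1):=V(2x_0-x_1)$, are two real-analytic functions on $\mathbb{R}$ solving the same autonomous equation $y'^2=\alpha(\alpha-2)y^2+2y^3-y^4$ with the same Cauchy data at $x_0$ (value $y_\ast$, and derivatives $U'(x_0)=-V'(x_0)=\widetilde V'(x_0)$). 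Uniqueness for this first-order equation near $x_0$ — away from the zeros of the right-hand side when $U'(x_0)\neq0$, and by the standard analysis of the bouncing solution at a simple turning point otherwise — yields $U=\widetilde V$ near $x_0$, hence $U\equiv\widetilde V$ on $\mathbb{R}$ by analyticity, i.e. $u(x_0+x_1,0)=v(x_0-x_1,0)$ for every $x_1$, which is~\eqref{lienuv}. Finally $\partial_1 v(x_0,0)=V'(x_0)\neq0$: if $V'(x_0)=0$ then $x_0$ is a critical point of $V=u_\alpha(\cdot,0)$, about which $V$ is even, so~\eqref{lienuv} gives $U(x_0+x_1)=V(x_0-x_1)=V(x_0+x_1)$, i.e. $U\equiv V$, whence $F'\equiv0$, $F$ is constant, $\cos F$ is constant, and $U+V=1+\cos F$ forces $U$ constant — impossible for $u_{\alpha'}$ with $\alpha'\in(1,2)$.

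The step I expect to be the main obstacle is exactly this passage from the one-point coincidences at $x_0$ (equal values, opposite first derivatives) to the global reflection identity~\eqref{lienuv}: it requires first establishing $\alpha'=\alpha$ through~\eqref{PBO3} so that $U$ and $\widetilde V$ obey one and the same ODE, and then an ODE-uniqueness argument that must also cover the degenerate case in which $x_0$ happens to sit at a turning point of $V$. Real-analyticity of $U$ and $V$ — inherited from their being boundary values of harmonic functions, or read directly off the explicit formulas in Theorem~\ref{thmSolBO} — is what upgrades the local coincidence into a global one.
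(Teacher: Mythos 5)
The paper itself gives no argument for this lemma: it simply defers to Toland~\cite{Tol97}, equations (5.9)--(5.13). Your proof is thus a self-contained alternative, and its skeleton is sound: boundedness of $F:=f(\cdot,0)$ together with the period averages $2\sigma'$, $2\sigma$ coming from Lemma~\ref{lemmeintegraleualpha} forces $F'=U-V$ (from~\eqref{v-u}) to vanish at some $x_0$; differentiating~\eqref{u0+v0} at $x_0$ gives $U'(x_0)=-V'(x_0)$; evaluating~\eqref{PBO3} for both waves at $x_0$ and using $U'(x_0)^2=V'(x_0)^2$ pins down $\alpha'=\alpha$ (monotonicity of $t\mapsto t(t-2)$ on $(1,2)$, $y_*>0$ by~\eqref{PBO1}); and then $U$ and $\widetilde V(x):=V(2x_0-x)$ share value and derivative at $x_0$ and the same first-order relation, so Cauchy--Lipschitz uniqueness for $y'=\pm\sqrt{P(y)}$ (legitimate when $P(y_*)=U'(x_0)^2>0$) plus real-analyticity of the explicit profiles in Theorem~\ref{thmSolBO} yields~\eqref{lienuv} globally. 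This buys a proof readable inside the paper, and it makes explicit the fact $\alpha'=\alpha$, which the statement of the lemma leaves implicit.

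The one soft spot is the degenerate case $U'(x_0)=-V'(x_0)=0$: there you invoke ``the standard analysis of the bouncing solution at a simple turning point'' without proof, and then your last paragraph excludes this case by using~\eqref{lienuv} itself, i.e.\ the identity whose derivation in that case was only sketched -- a mild circularity. It is easily repaired without any turning-point analysis: your step establishing $\alpha'=\alpha$ uses only $U'(x_0)^2=V'(x_0)^2$, so it is available here; then $U$ and $V$ are translates of the same profile $u_\alpha(\cdot,0)$, each with a critical point at $x_0$ and the same critical value $y_*\in\{\alpha,2-\alpha\}$. Since $u_\alpha(\cdot,0)$ attains each extremal value exactly once per period and is even about each of its critical points, this forces $U\equiv V$, hence $F$ constant and, by~\eqref{u0+v0}, $U$ constant -- impossible for $\alpha\in(1,2)$. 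So the degenerate case cannot occur, $\partial_1 v(x_0,0)\neq 0$, and only the Lipschitz Cauchy problem is ever needed. With that patch your argument is complete and correct.
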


\begin{lemme}
\label{lemmaximizer}
Let $x_0 \in \mathbb{R}$ be given by Lemma~\ref{lemtvi}. Then there exists a unique maximizer $x_M \in \mathbb{R}$ of $v(\cdot,0)$ that is closest to $x_0$. Moreover, we have
\begin{equation}
\label{x0-xM}
\left\vert x_0-x_M \right\vert = \frac{\pi}{4\sigma}
\end{equation}
and
\begin{equation}
\label{cosfxM}
\cos f(x_M,0)=1.
\end{equation}
\end{lemme}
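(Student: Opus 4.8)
The plan is to pin down the position of $x_0$ relative to the maximizers of $v(\cdot,0)$ by confronting the reflection identity of Lemma~\ref{lemtvi} with the pointwise constraint $\cos f\le 1$ carried by~\eqref{u0+v0}. First I would rewrite \eqref{lienuv} in the equivalent form $u(y,0)=v(2x_0-y,0)$ for every $y\in\mathbb{R}$. By \eqref{PBO2} and the explicit formulas \eqref{solBOperiodic}--\eqref{deltaGammagamma} (so $\Gamma_\alpha(0)=\gamma$), the map $v(\cdot,0)$ equals $\frac{2\sigma\gamma}{1+(\gamma^2-1)\sin^2(\sigma x_1)}$; it is $\frac{\pi}{\sigma}$-periodic, its maximum equals $\alpha=2\sigma\gamma$, and, since $\gamma^2-1>0$ for $\alpha\in(1,2)$, this maximum is attained exactly on $\frac{\pi}{\sigma}\mathbb{Z}$. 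Let $x_M\in\frac{\pi}{\sigma}\mathbb{Z}$ be any maximizer. Evaluating \eqref{u0+v0} at $x_1=x_M$ and using $u(x_M,0)=v(2x_0-x_M,0)$ gives
$$1+\cos f(x_M,0)=\alpha+v(2x_0-x_M,0).$$
Since $\cos^2$ and $\sin^2$ have period $\pi$, the shift of $x_M$ by an element of $\frac{\pi}{\sigma}\mathbb{Z}$ drops out of $v(\cdot,0)$ and one obtains
$$v(2x_0-x_M,0)=\frac{2\sigma\gamma}{1+(\gamma^2-1)\sin^2(2\sigma x_0)},$$
which does not depend on the chosen maximizer.

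Next I would impose $\cos f(x_M,0)\le 1$, i.e.\ $v(2x_0-x_M,0)\le 2-\alpha$. Substituting $2-\alpha=\frac{2\sigma}{\gamma}$ (a consequence of \eqref{deltaGammagamma}) and clearing the positive denominator, the inequality reduces to $(\gamma^2-1)\sin^2(2\sigma x_0)\ge\gamma^2-1$, hence $\sin^2(2\sigma x_0)=1$, that is $x_0\in\frac{\pi}{4\sigma}+\frac{\pi}{2\sigma}\mathbb{Z}$. Since the maximizers of $v(\cdot,0)$ form the progression $\frac{\pi}{\sigma}\mathbb{Z}$ with step $\frac{\pi}{\sigma}$, one reads off that the distance of $x_0$ to this set is exactly $\frac{\pi}{4\sigma}$ and that the nearest maximizer is unique (equidistance between two consecutive maximizers would force $x_0\in\frac{\pi}{2\sigma}+\frac{\pi}{\sigma}\mathbb{Z}$, which is incompatible with $x_0\in\frac{\pi}{4\sigma}+\frac{\pi}{2\sigma}\mathbb{Z}$). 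Calling this maximizer $x_M$ gives the uniqueness statement and \eqref{x0-xM}. Finally, plugging $\sin^2(2\sigma x_0)=1$ back into the two displayed identities yields $v(2x_0-x_M,0)=\frac{2\sigma\gamma}{\gamma^2}=\frac{2\sigma}{\gamma}=2-\alpha$, hence $1+\cos f(x_M,0)=\alpha+(2-\alpha)=2$, which is \eqref{cosfxM}.

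The genuinely delicate point is the algebraic reduction above: the a priori inequality $\cos f\le 1$ has to be shown to saturate into the sharp equality $\sin^2(2\sigma x_0)=1$, and this requires exploiting the exact relation $2-\alpha=\frac{2\sigma}{\gamma}$ between $\sigma$, $\gamma$ and $\alpha$ rather than mere estimates; once this is in place, the location of $x_0$ forces both the uniqueness of the closest maximizer and the value of $\cos f$ there. Everything else is bookkeeping with the explicit formula for $v(\cdot,0)$; in particular the hypothesis $\partial_1 v(x_0,0)\neq 0$ inherited from Lemma~\ref{lemtvi} is automatically compatible with $x_0\in\frac{\pi}{4\sigma}+\frac{\pi}{2\sigma}\mathbb{Z}$ (these quarter-period points are not critical points of $v(\cdot,0)$) and plays no further role.
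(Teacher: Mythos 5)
Your argument is correct, and it is worth noting that the paper does not actually prove this lemma: it cites Toland's equations (5.9)--(5.13), where the conclusion is reached from the qualitative properties of Benjamin--Ono solutions (periodicity \eqref{PBO2}, the relation \eqref{PBO4} $\sup u_\alpha(\cdot,0)+\inf u_\alpha(\cdot,0)=2$) combined with the reflection identity: the bound $u(x_M,0)+v(x_M,0)=1+\cos f(x_M,0)\leqslant 2$ forces $u(x_M,0)$, which is the reflected value $v(2x_0-x_M,0)$, to equal the minimum $2-\alpha$ of $v(\cdot,0)$, so the reflection about $x_0$ sends maximizers of $v$ to minimizers, which are offset by half a period, giving $\vert x_0-x_M\vert=\frac{\pi}{4\sigma}$ and the saturation $\cos f(x_M,0)=1$. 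You reach exactly this saturation, but by direct computation with the closed form $v(x_1,0)=\frac{2\sigma\gamma}{1+(\gamma^2-1)\sin^2(\sigma x_1)}$ and the algebraic identities $\gamma=\frac{\alpha}{2\sigma}$, $4\sigma^2=\alpha(2-\alpha)$, hence $2-\alpha=\frac{2\sigma}{\gamma}$; all the steps check out ($\gamma>1$ for $\alpha\in(1,2)$, the maximizer set is $\frac{\pi}{\sigma}\mathbb{Z}$, the shift by $x_M$ drops out by $\pi$-periodicity of $\sin^2$, the inequality $\cos f\leqslant 1$ reduces to $\sin^2(2\sigma x_0)=1$, and the parity argument excludes equidistance, so the nearest maximizer is unique and at distance $\frac{\pi}{4\sigma}$). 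What your route buys is a self-contained proof within this paper, independent of Toland's intermediate identities, and it makes transparent that the hypothesis $\partial_1 v(x_0,0)\neq 0$ is not needed for this lemma (it is automatically satisfied at quarter-period points); what it costs is reliance on the explicit formula for $u_\alpha$, whereas Toland's argument only uses the structural properties \eqref{PBO1}--\eqref{PBO4}.
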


We refer to Toland~\cite{Tol97} -- equations~(5.9) to~(5.13) -- for the proofs of both lemmas.
\vspace{.5cm}

\noindent
Let $x_0 \in \mathbb{R}$ be given by Lemma \ref{lemtvi}. By Lemma \ref{lemmaximizer}, there exists a unique maximizer $x_M$ of $u(\cdot,0)$ such that the distance between $x_0$ and $x_M$ is $\frac{\pi}{4\sigma}$. This gives us two configurations: either $x_M<x_0$ or $x_M>x_0$. Using the translation invariance of $v(\cdot,0)$, we can reduce this two subcases~to:
\begin{itemize}
\item[--] Subcase 1: $x_M=0$ and $x_0=\frac{\pi}{4\sigma}$,
\item[--] Subcase 2: $x_M=0$ and $x_0=-\frac{\pi}{4\sigma}$.
\end{itemize}

\noindent
From now on, we assume $x_M=0$ and we keep the notation $x_0$ (with $x_0 =\pm \frac{\pi}{4\sigma}$). By~\eqref{v-u} and~\eqref{lienuv}, for every $x_1 \in \mathbb{R}$,
\begin{align*}
\partial_1f(x_0+x_1,0)
& = u(x_0+x_1,0)-v(x_0+x_1,0)
= v(x_0-x_1,0)-v(x_0+x_1,0),
\end{align*}
i.e., after a change of variable,
\begin{equation}
\label{dr1f_sign}
\partial_1f(x_1,0)
= v(2x_0-x_1,0)-v(x_1,0).
\end{equation}
As $x_M=0$, we deduce that, for every $x_1 \in \mathbb{R}$,
\begin{align*}
f(x_1,0)
& = f(x_M,0)+\int_{x_M}^{x_1} \partial_1f(s,0)\mathrm{d} s
\\ & = f(0,0)+\int_0^{x_1} \left( v(2x_0-s,0)-v(s,0) \right) \mathrm{d} s.
\end{align*}
By~\eqref{cosfxM}, there exists $n \in \mathbb{Z}$ such that $f(0,0)=2n\pi$, so that
\begin{align*}
f(x_1,0)
& = 2n\pi+\int_0^{x_1} \left( v(2x_0-s,0)-v(s,0) \right) \mathrm{d} s
\end{align*}
for every $x_1 \in \mathbb{R}$. Hence, the function $(x_1,x_2) \mapsto f(x_1,x_2)-\lambda x_2 -2n\pi$ is harmonic and bounded in $\mathbb{R}_+^2$ -- as solution of~\eqref{PNlambda} because $\sin(2n\pi)=0$ and $\cos(2n\pi)=1$ -- and its value \linebreak on $\partial(\mathbb{R}_+^2) \setminus \left\lbrace \infty \right\rbrace=\mathbb{R} \times \left\lbrace 0 \right\rbrace$ is
\begin{equation*}
\int_0^{x_1} \left( v(2x_0-s,0)-v(s,0) \right) \mathrm{d} s
\end{equation*}
for every $x_1 \in \mathbb{R}$. In order to extend the above function to $\overline{\mathbb{R}_+^2}$, we cite~\cite[Equation~(5.16)]{Tol97}:

\begin{lemme}
\label{lemg}
Assume $x_0=\pm\frac{\pi}{4\sigma}$. The function $g \colon \mathbb{R}_+^2 \rightarrow \mathbb{R}$ defined as
\begin{equation}
g(x_1,x_2) = \int_0^{x_1} \left( v \left( 2x_0-s,x_2 \right) -v \left( s,x_2 \right) \right) \mathrm{d} s
\end{equation}
is harmonic and bounded.
\end{lemme}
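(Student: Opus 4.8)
The plan is to verify the two assertions — harmonicity and boundedness of $g$ — directly from the explicit structure of $v$. In the setting of Case~4 the $x_1$-translation has been fixed so that $v=u_\alpha$ with $v(x_1,x_2)=\frac{2\sigma\Gamma_\alpha(x_2)}{\cos^2(\sigma x_1)+\Gamma_\alpha(x_2)^2\sin^2(\sigma x_1)}$; in particular $v\in C^\infty(\overline{\mathbb{R}_+^2})$ (the denominator equals $1+(\Gamma_\alpha^2-1)\sin^2(\sigma x_1)\geqslant 1$ since $\Gamma_\alpha\geqslant 1$), the map $x_1\mapsto v(x_1,x_2)$ is even and $\frac{\pi}{\sigma}$-periodic, and $0<v\leqslant\alpha$ by~\eqref{PBO1} and~\eqref{PBO2}. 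Smoothness and the uniform bounds on $v$ and its derivatives make $g$ well defined and $C^\infty$ on $\overline{\mathbb{R}_+^2}$ and justify differentiation under the integral sign.

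For the harmonicity I would first compute $\partial_1 g(x_1,x_2)=v(2x_0-x_1,x_2)-v(x_1,x_2)$, hence $\partial_{11}g=-(\partial_1 v)(2x_0-x_1,x_2)-(\partial_1 v)(x_1,x_2)$. For $\partial_{22}g$ I differentiate under the integral sign, use $\partial_{22}v=-\partial_{11}v$ (harmonicity of $v$), and integrate $\partial_{11}v$ in $s$, treating the reflected argument $2x_0-s$ by the substitution $\tau=2x_0-s$; this yields $\partial_{22}g=(\partial_1 v)(2x_0-x_1,x_2)+(\partial_1 v)(x_1,x_2)-(\partial_1 v)(2x_0,x_2)-(\partial_1 v)(0,x_2)$. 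Adding the two expressions, the $x_1$-dependent terms cancel and one is left with $\Delta g=-(\partial_1 v)(2x_0,x_2)-(\partial_1 v)(0,x_2)$. It then remains to observe that $(\partial_1 v)(0,x_2)=0$ because $v(\cdot,x_2)$ is even, and that $(\partial_1 v)(2x_0,x_2)=0$ because $2x_0=\pm\frac{\pi}{2\sigma}$ is a critical point (in fact a minimum) of the even $\frac{\pi}{\sigma}$-periodic function $x_1\mapsto v(x_1,x_2)$ — this is precisely the role of the hypothesis $x_0=\pm\frac{\pi}{4\sigma}$. Hence $\Delta g\equiv 0$ in $\mathbb{R}_+^2$.

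For the boundedness the key point is that $g(\cdot,x_2)$ is itself $\frac{\pi}{\sigma}$-periodic for every fixed $x_2\geqslant 0$. Indeed, for fixed $x_2$ the integrand $s\mapsto v(2x_0-s,x_2)-v(s,x_2)$ is $\frac{\pi}{\sigma}$-periodic and has vanishing mean over one period: both $\int_0^{\pi/\sigma}v(s,x_2)\,\mathrm{d}s=2\pi$ and $\int_0^{\pi/\sigma}v(2x_0-s,x_2)\,\mathrm{d}s=2\pi$ by Lemma~\ref{lemmeintegraleualpha} and the periodicity of $v$ (for the second, substitute and use that an integral of a $\frac{\pi}{\sigma}$-periodic function over an interval of length $\frac{\pi}{\sigma}$ is period-independent). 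Therefore $g(x_1+\frac{\pi}{\sigma},x_2)-g(x_1,x_2)=\int_0^{\pi/\sigma}\big(v(2x_0-s,x_2)-v(s,x_2)\big)\,\mathrm{d}s=0$. Consequently every point can be reduced to $x_1\in[0,\frac{\pi}{\sigma}]$, where $|g(x_1,x_2)|\leqslant\int_0^{\pi/\sigma}\big(v(2x_0-s,x_2)+v(s,x_2)\big)\,\mathrm{d}s=4\pi$ since $v>0$, again by Lemma~\ref{lemmeintegraleualpha}. This gives $\|g\|_{L^\infty(\mathbb{R}_+^2)}\leqslant 4\pi$.

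The only genuinely delicate point — the main obstacle — is the harmonicity computation: one must handle the chain rule in the reflected argument $2x_0-s$ and the substitution that produces the boundary terms $(\partial_1 v)(2x_0,x_2)$ and $(\partial_1 v)(0,x_2)$, and then recognize that $x_0=\pm\frac{\pi}{4\sigma}$ is exactly what forces both of these to vanish. Once that is in place, boundedness follows immediately from the periodicity of $g(\cdot,x_2)$. The same conclusions are obtained by Toland~\cite[Section~5]{Tol97}.
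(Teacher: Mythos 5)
Your proof is correct, and it actually supplies something the paper does not: for this lemma the paper gives no argument at all, simply citing Toland \cite[Equation~(5.16)]{Tol97} (whose construction for $\lambda=0$ carries over verbatim since $g$ is built only from $v=u_\alpha$). Your verification is self-contained and sound. The Laplacian computation is right: differentiating under the integral sign and using $\partial_{22}v=-\partial_{11}v$ produces exactly the boundary terms $-(\partial_1 v)(2x_0,x_2)-(\partial_1 v)(0,x_2)$, and both vanish — the first because $2x_0=\pm\frac{\pi}{2\sigma}$ is a critical point of the even, $\frac{\pi}{\sigma}$-periodic profile $v(\cdot,x_2)$ (equivalently, $\cos(\sigma\cdot 2x_0)=0$ in the explicit formula), the second by evenness — so the hypothesis $x_0=\pm\frac{\pi}{4\sigma}$ is used exactly where you say it is. The boundedness argument via the zero mean of the integrand over one period (both integrals equal $2\pi$ by Lemma~\ref{lemmeintegraleualpha} and translation invariance of the period integral), giving $\frac{\pi}{\sigma}$-periodicity of $g(\cdot,x_2)$ and the uniform bound $4\pi$, is also correct; the smoothness of $v$ up to $x_2=0$ follows as you note from $\Gamma_\alpha\geqslant 1$ (indeed $\Gamma_\alpha\in(1,\gamma]$ with $\gamma>1$ for $\alpha\in(1,2)$), which justifies the differentiations. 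Compared with the paper's approach, yours buys transparency — it makes explicit the cancellation mechanism and the precise role of $x_0=\pm\frac{\pi}{4\sigma}$ — at the cost of a page of computation that the paper outsources to \cite{Tol97}; the only cosmetic slip is invoking \eqref{PBO2} for the bound $v\leqslant\alpha$, which is neither needed (positivity plus Lemma~\ref{lemmeintegraleualpha} suffice) nor quite what \eqref{PBO2} gives for $x_2>0$, where the maximum in $x_1$ is $2\sigma\Gamma_\alpha(x_2)<\alpha$.
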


The functions $g$ defined in Lemma~\ref{lemg} and $(x_1,x_2) \mapsto f(x_1,x_2)-\lambda x_2-2n\pi$ are bounded harmonic functions in $\mathbb{R}_+^2$ which coincide on $\mathbb{R} \times \left\lbrace 0 \right\rbrace = \partial(\mathbb{R}_+^2) \setminus \left\lbrace 0 \right\rbrace$. By the Phragm\' en-Lindel\" of principe, they coincide in $\overline{\mathbb{R}_+^2}$ and thus, for every $(x_1,x_2) \in \overline{\mathbb{R}_+^2}$,
\begin{align*}
f(x_1,x_2)
& = 2n\pi + \int_0^{x_1} \left( v(2x_0-s,x_2)-v(s,x_2) \right) \mathrm{d} s + \lambda x_2.
\end{align*}
We now compute the integral explicitely in order to show that $f$ has the expression given in~\eqref{solPNperiodic} in Theorem~\ref{thmSolPNlambda}.

\begin{lemme}
\label{lemexprualpha}
Assume $x_0=\pm \frac{\pi}{4\sigma}$. For every $x_1 \in \mathbb{R} \setminus \left( \frac{\pi}{2\sigma} + \frac{\pi}{\sigma}\mathbb{Z} \right)$ and $x_2 \geqslant 0$,
\begin{align*}
& \int_0^{x_1} \left( v(2x_0-s,x_2)-v(s,x_2) \right) \mathrm{d} s
\\
& \qquad \qquad = 2 \left[ \arctan \left( \frac{\tan(\sigma x_1)}{\Gamma_\alpha(x_2)} \right)-\arctan \left( \Gamma_\alpha(x_2)\tan(\sigma x_1) \right) \right].
\end{align*}
\end{lemme}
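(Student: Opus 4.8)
The plan is to compute both sides, for fixed $x_2\geq 0$, as functions of $x_1$ and to identify them by the fundamental theorem of calculus. Throughout, fix $x_2\geq 0$, abbreviate $\Gamma:=\Gamma_\alpha(x_2)$ (which is positive by~\eqref{deltaGammagamma}), and set $D:=\frac{\pi}{2\sigma}+\frac{\pi}{\sigma}\mathbb{Z}$. Recall from~\eqref{solBOperiodic} that $v(s,x_2)=\dfrac{2\sigma\Gamma}{\cos^2(\sigma s)+\Gamma^2\sin^2(\sigma s)}$.

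First I would record two elementary derivative identities, valid for $s\in\mathbb{R}\setminus D$. Differentiating and clearing denominators by multiplying by $\cos^2(\sigma s)$ gives
\[
\frac{\mathrm{d}}{\mathrm{d}s}\Big[2\arctan\big(\Gamma\tan(\sigma s)\big)\Big]
=\frac{2\sigma\Gamma}{\cos^2(\sigma s)+\Gamma^2\sin^2(\sigma s)}=v(s,x_2),
\qquad
\frac{\mathrm{d}}{\mathrm{d}s}\Big[2\arctan\Big(\tfrac{\tan(\sigma s)}{\Gamma}\Big)\Big]
=\frac{2\sigma\Gamma}{\Gamma^2\cos^2(\sigma s)+\sin^2(\sigma s)}.
\]
Since $x_0=\pm\frac{\pi}{4\sigma}$, one has $\sigma(2x_0-s)=\pm\frac{\pi}{2}-\sigma s$, whence $\cos^2\!\big(\sigma(2x_0-s)\big)=\sin^2(\sigma s)$ and $\sin^2\!\big(\sigma(2x_0-s)\big)=\cos^2(\sigma s)$; substituting this into~\eqref{solBOperiodic} identifies the second right-hand side above with $v(2x_0-s,x_2)$. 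Hence the right-hand side of the lemma, viewed as a function of $x_1$ on $\mathbb{R}\setminus D$, has $x_1$-derivative $v(2x_0-x_1,x_2)-v(x_1,x_2)$, which is precisely the integrand of the left-hand side.

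It then remains to match the additive constant. Write $F(x_1)$ for the left-hand side and $G(x_1)$ for the right-hand side. The integrand $s\mapsto v(2x_0-s,x_2)-v(s,x_2)$ is continuous on all of $\mathbb{R}$, so $F\in C^1(\mathbb{R})$ with $F'(x_1)=v(2x_0-x_1,x_2)-v(x_1,x_2)$ everywhere. The function $G$, a priori defined only on $\mathbb{R}\setminus D$, extends continuously to $\mathbb{R}$: as $s$ tends to any $s_k:=\frac{\pi}{2\sigma}+\frac{k\pi}{\sigma}\in D$ we have $\tan(\sigma s)\to\pm\infty$, so each of the two $\arctan$ terms in $G$ tends to $\pm\frac{\pi}{2}$ and their difference tends to $0$; thus $G$ has a removable discontinuity at each point of $D$ (with value $0$ there), and the so-extended $G$ is continuous on $\mathbb{R}$. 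By the derivative identities above, $F'=G'$ on $\mathbb{R}\setminus D$; since $F$ and $G$ are both continuous on $\mathbb{R}$ and $D$ is discrete, $F-G$ is constant on $\mathbb{R}$. Finally $F(0)=0$ and, since $\tan 0=0$, also $G(0)=2(\arctan 0-\arctan 0)=0$, so $F\equiv G$; this yields the claimed identity for every $x_1\in\mathbb{R}\setminus D$ and every $x_2\geq 0$.

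The computation is entirely elementary (the substitution $t=\tan(\sigma s)$ would produce the same antiderivatives); the only delicate point is the bookkeeping across the poles of $s\mapsto\tan(\sigma s)$. What makes it painless is the observation that the two $\arctan$ terms in $G$ jump by $-2\pi$ across each $s_k$ and therefore their difference stays continuous there, so no period-counting correction is needed in passing from the local antiderivatives to the global identity.
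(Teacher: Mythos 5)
Your proof is correct and rests on the same core computation as the paper's: the reflection identity $\cos^2\!\left(\pm\tfrac{\pi}{2}-\sigma s\right)=\sin^2(\sigma s)$ (coming from $x_0=\pm\tfrac{\pi}{4\sigma}$) together with the two $\arctan$ antiderivatives, the only difference being that the paper integrates directly on $\left(-\tfrac{\pi}{2\sigma},\tfrac{\pi}{2\sigma}\right)$ and then extends by the $\tfrac{\pi}{\sigma}$-periodicity of $v(\cdot,x_2)$, whereas you differentiate the right-hand side and match constants via continuity across the removable singularities at $\tfrac{\pi}{2\sigma}+\tfrac{\pi}{\sigma}\mathbb{Z}$. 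Both routes are valid; your continuity argument just makes the bookkeeping across the poles of $\tan(\sigma x_1)$ slightly more explicit than the paper's appeal to periodicity.
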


\begin{proof} Let $x_2 \geqslant 0$ be fixed. By~\eqref{deltaGammagamma}, we have $\Gamma_\alpha(x_2)>0$. For every $x_1 \in (-\frac{\pi}{2\sigma},\frac{\pi}{2\sigma})$,
\begin{align*}
\ 
& \int_0^{x_1} \left( v(2x_0-s,x_2)-v(s,x_2) \right)\mathrm{d} s
\\ & \qquad = \int_0^{x_1} \left( \frac{2\sigma\Gamma_\alpha(x_2)}{\cos^2 \left( \pm \frac{\pi}{2}-\sigma s \right)+\Gamma_\alpha(x_2)^2 \sin^2 \left( \pm \frac{\pi}{2}-\sigma s \right)} \right.
\\ & \qquad \qquad \qquad \qquad \qquad \qquad\left. - \frac{2\sigma\Gamma_\alpha(x_2)}{\cos^2(\sigma s)+\Gamma_\alpha(x_2)^2\sin^2(\sigma s)} \right)\mathrm{d} s
\\ & \qquad = \int_0^{x_1} \left( \frac{2\sigma \Gamma_\alpha(x_2)}{\sin^2(\sigma s)+\Gamma_\alpha(x_2)^2\cos^2(\sigma s)} - \frac{2\sigma\Gamma_\alpha(x_2)}{\cos^2(\sigma s)+\Gamma_\alpha(x_2)^2\sin^2(\sigma s)} \right) \mathrm{d} s
\\ & \qquad = 2\int_0^{x_1} \left( \frac{\frac{\sigma}{\Gamma_\alpha(x_2) \cos^2(\sigma s)}}{1+\left( \frac{\tan (\sigma s)}{\Gamma_\alpha(x_2)} \right)^2} - \frac{\frac{\sigma\Gamma_\alpha(x_2)}{\cos^2(\sigma s)}}{1+(\Gamma_\alpha(x_2) \tan(\sigma s))^2} \right) \mathrm{d} s
\\ & \qquad = 2\arctan \left( \frac{\tan (\sigma x_1)}{\Gamma_\alpha(x_2)} \right) -2\arctan \left( \Gamma_\alpha(x_2) \tan (\sigma x_1) \right).
\end{align*}
By $\frac{\pi}{\sigma}$-periodicity of $v(\cdot,x_2)$, we extend this equalities to every $x_1 \in \mathbb{R} \setminus \left( \frac{\pi}{2\sigma}+\frac{\pi}{\sigma}\mathbb{Z} \right)$.
\end{proof}

\begin{remarque}
The function
\begin{equation*}
(x_1,x_2) \mapsto \arctan \left( \frac{\tan(\sigma x_1)}{\Gamma_\alpha(x_2)} \right) - \arctan \left( \Gamma_\alpha(x_2) \tan(\sigma x_1) \right),
\end{equation*}
defined for every $x_1 \in \mathbb{R} \setminus \left( \frac{\pi}{2\sigma}+\frac{\pi}{\sigma}\mathbb{Z} \right)$ and $x_2 \geqslant 0$, extends smoothly to $\overline{\mathbb{R}_+^2}$ by taking the value zero when $x_1 \in \frac{\pi}{2\sigma}+\frac{\pi}{\sigma}\mathbb{Z}$. In the following, we thus assume the above expression to be defined in $\overline{\mathbb{R}_+^2}$, keeping in mind that it is zero when $\tan(\delta x_1)$ is not well defined.
\end{remarque}

\begin{remarque}
Inverting the roles of $u$ and $v$, i.e. choosing firstly $u$ and secondly $v$, we get the same candidates $f$ in the first three cases. In Case~4, we only get a sign change in the relation~\eqref{dr1f_sign}. Hence, the function $f$ defined as
\begin{align*}
f(x_1,x_2)
& = 2n\pi - 2 \left[ \arctan \left( \frac{\tan(\sigma x_1)}{\Gamma_\alpha(x_2)} \right) - \arctan \left( \Gamma_\alpha(x_2) \tan(\sigma x_1) \right) \right] + \lambda x_2,
\end{align*}
for every $(x_1,x_2) \in \overline{\mathbb{R}_+^2}$, is also a candidate to be a solution of~\eqref{PNlambda}.
\end{remarque}

We summarize the possible configurations $(u,v)$ in the following table, with the possible expressions for $f$.

\begin{equation*}
\renewcommand{\arraystretch}{2}
\begin{array}{|c|l|}
\hline
(u,v) & f(x_1,x_2)
\\ \hline (u_0,u_0) & (2n+1)\pi+\lambda x_2
\\ \hline (u_0,u_2) & 2n\pi + 2\arctan \left( \frac{x_1}{1+x_2} \right) + \lambda x_2
\\ \hline (u_1,u_1) & 2n\pi + \lambda x_2
\\ \hline (u_2,u_0) & 2n\pi - 2\arctan \left( \frac{x_1}{1+x_2} \right) + \lambda x_2
\\ \hline (u_\alpha,v_\alpha) & 2n\pi \pm 2 \left[ \arctan \left( \frac{\tan(\sigma x_1)}{\Gamma_\alpha(x_2)} \right) - \arctan \left( \Gamma_\alpha(x_2)\tan(\sigma x_1) \right) \right] + \lambda x_2
\\ \hline
\end{array}
\end{equation*}

\begin{remarque}
Note that $(x_1,x_2) \in \overline{\mathbb{R}_+^2} \mapsto f(x_1,x_2)-\lambda x_2$ takes values in an interval of length less than $2\pi$.
\end{remarque}

Now it remains to check if all possible functions $f$ in the above table are solutions of~\eqref{PNlambda}. Let us recall
\begin{equation}
\tag{PN$_\lambda$}
\left\lbrace \begin{array}{l}
f \in C^\infty(\mathbb{R}_+^2) \cap C^1(\overline{\mathbb{R}_+^2}),
\\
(x_1,x_2) \mapsto f(x_1,x_2)-\lambda x_2 \text{ is bounded in } \mathbb{R}_+^2,
\\
\Delta f=0 \text{ in } \mathbb{R}_+^2,
\\
\partial_2 f-\lambda+\sin f = 0 \text{ on } \mathbb{R} \times \left\lbrace 0 \right\rbrace.
\end{array} \right.
\end{equation}
For any $n \in \mathbb{Z}$, the functions $(x_1,x_2) \mapsto (2n+1)\pi +\lambda x_2$ and $(x_1,x_2) \mapsto 2n\pi +\lambda x_2$ are clearly solutions of~\eqref{PNlambda}, in particular because $\sin(k\pi)=0$ for every $k \in \mathbb{Z}$. For any $n \in \mathbb{Z}$, the functions
\begin{equation*}
f^\pm \colon (x_1,x_2) \in \overline{\mathbb{R}_+^2} \mapsto 2n\pi \pm 2\arctan \left( \frac{x_1}{1+x_2} \right) + \lambda x_2
\end{equation*}
are harmonic in $\mathbb{R}_+^2$, of class $C^1$ in $\overline{\mathbb{R}_+^2}$ and the functions
\begin{equation*}
(x_1,x_2) \in \overline{\mathbb{R}_+^2} \mapsto f^\pm(x_1,x_2)-\lambda x_2
\end{equation*}
are bounded. Moreover, for every $x_1 \in \mathbb{R}$, $\partial_2f^\pm(x_1,0)-\lambda+\sin f^\pm(x_1,0)=0$. This shows that the functions $f^\pm$ are solutions of~\eqref{PNlambda}. To finish with, for $\alpha \in (1,2)$ and $n \in \mathbb{Z}$ fixed, we consider the function $f^\pm_\alpha \colon \overline{\mathbb{R}_+^2} \rightarrow \mathbb{R}$ defined as
\begin{align*}
f^\pm_\alpha (x_1,x_2) = 2n\pi \pm 2 \left[ \arctan \left( \frac{\tan(\sigma x_1)}{\Gamma_\alpha(x_2)} \right) - \arctan \left( \Gamma_\alpha (x_2)\tan(\sigma x_1) \right) \right] + \lambda x_2
\end{align*}
which is harmonic in $\mathbb{R}_+^2$ and of class $C^1$ in $\overline{\mathbb{R}_+^2}$ by Lemmas~\ref{lemg} and~\ref{lemexprualpha}. Moreover, the \linebreak function $(x_1,x_2) \in \overline{\mathbb{R}_+^2} \mapsto f^\pm_\alpha(x_1,x_2)-\lambda x_2$ is bounded. It remains to show that, for every $x_1 \in \mathbb{R}$,
\begin{align*}
\partial_2f^\pm_\alpha(x_1,0)-\lambda+\sin f^\pm_\alpha(x_1,0)&=0.
\end{align*}
First, we compute $\sin f_\alpha(\cdot,0)$. For every $x_1 \in \mathbb{R}$,
\begin{align*}
f^\pm_\alpha(x_1,0)
& = 2n\pi \pm 2 \left( \arctan \left( \frac{1}{\gamma}\tan(\sigma x_1) \right) - \arctan \left( \gamma \tan(\sigma x_1) \right) \right)
\end{align*}
since $\Gamma_\alpha(0)=\gamma$, and using trigonometric relations, we get
\begin{align*}
\sin f^\pm_\alpha(x_1,0)
& = \pm \left( \frac{1}{\gamma}-\gamma \right) \frac{2\tan(\sigma x_1)(1+\tan^2(\sigma x_1))}{\left( 1+\frac{1}{\gamma^2} \tan^2(\sigma x_1) \right) \left( 1+\gamma^2\tan^2(\sigma x_1) \right)}.
\end{align*}
On the other hand, computing $\partial_2f^\pm_\alpha(x_1,\cdot)$ for $x_1$ fixed, and using the identities $\Gamma_\alpha(0)=\gamma$ \linebreak and $\Gamma'_\alpha(0)=\sigma(1-\gamma^2)$, we get
\begin{align*}
\partial_2f^\pm_\alpha(x_1,0)-\lambda
& =\mp \sigma \left( 1-\gamma^2 \right) \left( 1+\frac{1}{\gamma^2} \right) \frac{2\tan(\sigma x_1)(1+\tan^2(\sigma x_1))}{\left( 1+\frac{1}{\gamma^2}\tan^2(\sigma x_1) \right) \left( 1+\gamma^2\tan^2(\sigma x_1) \right)}
\end{align*}
for every $x_1 \in \mathbb{R}$. Hence it remains to check that
\begin{align*}
\frac{1}{\gamma}-\gamma
& = \sigma \left( 1-\gamma^2 \right) \left( 1+\frac{1}{\gamma^2} \right).
\end{align*}
This is the following computation:
\begin{align*}
\frac{1}{\gamma}-\gamma-\sigma \left( 1-\gamma^2 \right) \left( 1+\frac{1}{\gamma^2} \right)
& = \left( \frac{1}{\gamma}-\gamma \right) \frac{\gamma-\sigma -\sigma \gamma^2}{\gamma}
\end{align*}
where, using~\eqref{deltaGammagamma},
\begin{align*}
\gamma-\sigma-\sigma\gamma^2
& = \frac{\alpha}{2\sigma}-\sigma-\frac{\alpha^2}{4\sigma}
 = \frac{2\alpha-4\sigma^2-\alpha^2}{4\sigma}
 = \frac{1}{4\sigma} \left( 2\alpha-\alpha(2-\alpha)-\alpha^2 \right)
 = 0.
\end{align*}
It confirms that $f^\pm_\alpha$ is solution of~\eqref{PNlambda}. The proof of Theorem~\ref{thmSolPNlambda} is complete.

\subsection{Proof of Theorem~\ref{thm_phir_minimizer}}
\label{subsection2.3}

We now study the link between solutions of~(PN$_\lambda$), given by Theorem~\ref{thmSolPNlambda}, and local minimizers of $E_{\varepsilon}^\delta$ in $\mathbb{R}_+^2$ in the sense of De Giorgi. More precisely, we get interested in the behaviour of the critical points $\varphi_\varepsilon$ of $E_\varepsilon^\delta$ near the boundary, in order to show the presence of boundary vortices (see~\cite{Kurzke06}). To go through this, we expect $(x_1,x_2) \mapsto \varphi_\varepsilon(x_1,x_2)-\delta_2 x_2$ to be nonconstant and nonperiodic.

Let $\lambda \in \mathbb{R}$. From Theorem \ref{thmSolPNlambda}, we deduce that the only nonconstant and nonperiodic solutions of~\eqref{PNlambda}, up to substracting $\lambda x_2$, are the functions
\begin{equation*}
(x_1,x_2) \in \overline{\mathbb{R}_+^2} \mapsto 2n\pi \pm 2 \arctan \left( \frac{x_1+a}{x_2+1} \right)+\lambda x_2
\end{equation*}
where $n \in \mathbb{Z}$ and $a \in \mathbb{R}$. Using~\eqref{phirepsilon} and~\eqref{systphirepsilon}, from the above solutions of~(PN$_\lambda$), we obtain the following corresponding solutions of~\eqref{systFeps} (see the proof of Proposition~\ref{prop_uminimizer} below for more details):
\begin{equation}
\label{criticEepsilonD}
(x_1,x_2) \in \overline{\mathbb{R}_+^2} \mapsto n\pi \pm \arctan \left( \frac{x_1+\varepsilon a}{x_2+\varepsilon} \right) +\delta_2x_2 - \frac{\pi}{2}
\end{equation}
where $n \in \mathbb{Z}$ and $a \in \mathbb{R}$.

This type of solutions is particularly relevant, because it illustrates a boundary vortex for the domain $\mathbb{R}_+^2$ at the point $(0,0)$ when taking the limits when $\varepsilon$ tends to zero (at scale $\varepsilon$, the vortex point is the point of coordinates $(-\varepsilon a,-\varepsilon)$). Indeed, assuming for a while that $\varepsilon$ and $\delta_2$ are negligible, we can test some combinations of $(n,\pm)$. The case $(n,\pm)=(1,-)$ gives a boundary vortex of degree $+1/2$ (see~\cite{BBH}, \cite{Kurzke06} or~\cite{IK21} for more information on the degree of $\mathbb{S}^1$-valued maps), and the magnetization $m=e^{i\varphi}$ seems like escaping from the point of coordinates $(-\varepsilon a,-\varepsilon)$ and behaves like $e^{i\theta}$ (see Figure~\ref{figure1}). The case $(n,\pm)=(0,+)$ gives a boundary vortex of degree $-1/2$, and the magnetization $m=e^{i\varphi}$ seems like converging to the point of coordinates $(-\varepsilon a,-\varepsilon)$ and behaves like $e^{-i\theta}$. 

\begin{figure}
\centering
\includegraphics[scale=0.5]{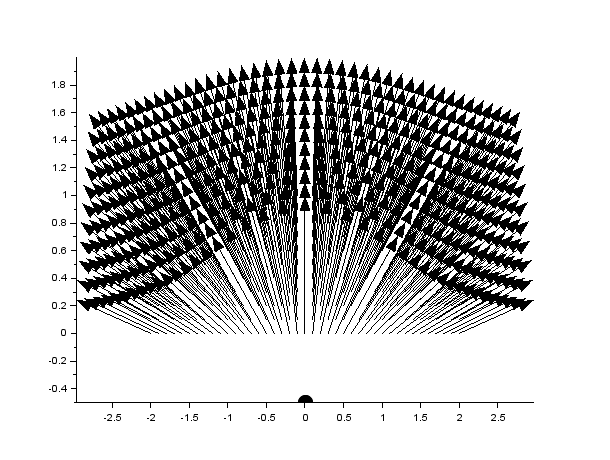}
\caption{Magnetization $m=e^{i\varphi}$ when $\varphi$ is of the form~\eqref{criticEepsilonD} with $n=1$, $\pm=-$, $a=0$, $\varepsilon=1/2$ and $\delta_2=1/10$.}
\label{figure1}
\end{figure}

The rest of this section is devoted to prove Theorem~\ref{thm_phir_minimizer}, i.e. we show that under the conditions~\eqref{conditionsminimizer}, the local minimizers $\varphi_\varepsilon$ of $E_\varepsilon^\delta$ in $\mathbb{R}_+^2$ in the sense of De Giorgi are the functions in~\eqref{criticEepsilonD} that correspond to the case $(n,\pm)=(1,-)$, which is the case of a boundary vortex of degree $+1/2$.

To begin with, we show in Proposition~\ref{prop_uminimizer} that any local minimizer of $E_\varepsilon^\delta$ in $\mathbb{R}_+^2$ in the sense of De Giorgi with conditions~\eqref{conditionsminimizer} must be of the expected form~\eqref{localminimizer}. Conversely, for proving that functions of the form~\eqref{localminimizer} are such local minimizers, we introduce the following definition.

\begin{definition}
\label{def_layer_function}
A function $\psi$ is a layer function associated to $E_\varepsilon^\delta$ in the sense of Cabr\'e and Sol\`a-Morales if it satisfies
\begin{equation*}
\left\lbrace \begin{array}{rcll}
\Delta \psi & = & 0 & \text{ in } \mathbb{R}_+^2,
\\
\partial_2\psi & = & \frac{1}{2\varepsilon}\sin(2\psi)+\delta_2 & \text{ on } \mathbb{R} \times \left\lbrace 0 \right\rbrace,
\end{array} \right.
\end{equation*}
and for every $x_1 \in \mathbb{R}$,
\begin{equation*}
\partial_1\psi(x_1,0)<0, \lim\limits_{x_1 \rightarrow +\infty} \psi(x_1,0)=0 \text{ and } \lim\limits_{x_1 \rightarrow -\infty} \psi(x_1,0)=\pi.
\end{equation*}
\end{definition}

These layer functions were studied by Cabr\'e and Sol\`a-Morales in~\cite{CSM05}. In that paper, properties in dimension two are given on the half plane $(0,+\infty) \times \mathbb{R}$, while we consider here $\mathbb{R}_+^2=\mathbb{R} \times (0,+\infty)$. The correspondence between a layer function $\psi$ in Definition~\ref{def_layer_function} and a layer function $u$ in~\cite{CSM05} is given by the relation
\begin{equation*}
\psi(x_1,x_2)=\frac{\pi}{2}\left( 1-u(-x_2,x_1) \right)
\end{equation*}
for any $(x_1,x_2) \in \mathbb{R}_+^2$.
In~\cite[Lemma~3.1]{CSM05}, Cabr\'e and Sol\`a-Morales prove that, if $r>0$ and $\psi$ is a layer function associated to $E_\varepsilon^\delta$, then $\psi$ is the unique weak solution of the problem
\begin{equation}
\label{pbCSM}
\left\lbrace \begin{array}{ll}
\Delta \varphi = 0 & \text{ in } B_r^+,
\\ \partial_2\varphi = \frac{1}{2\varepsilon}\sin(2\varphi)+\delta_2 & \text{ on } (-r,r) \times \left\lbrace 0 \right\rbrace,
\\ 0 \leqslant \varphi(x_1,x_2)-\delta_2x_2 \leqslant \pi & \text{ for every } (x_1,x_2) \in B_r^+,
\\ \varphi=\psi & \text{ on } \partial B_r^+ \cap \mathbb{R}_+^2.
\end{array} \right.
\end{equation}

\noindent
The method for proving this uniqueness property is the sliding method. This property will allow us to show that functions $\varphi_\varepsilon$ given by~\eqref{localminimizer} are local minimizers of $E_\varepsilon^\delta$ in $\mathbb{R}_+^2$ in the sense of De Giorgi that satisfy~\eqref{conditionsminimizer}. Our strategy of proof is the following: let $\varphi_\varepsilon$ be given by~\eqref{localminimizer}. Given $r>0$ and a minimizer $\widetilde{\varphi}_\varepsilon$ of $E_\varepsilon^\delta(\cdot;B_r)$ such that $\widetilde{\varphi}_\varepsilon=\varphi_\varepsilon$ on $\partial B_r^+ \cap \mathbb{R}_+^2$, we will prove in Proposition~\ref{prop_phirzeropi} that there exists a minimizer $\widehat{\varphi}_\varepsilon$ of $E_\varepsilon^\delta(\cdot;B_r)$ that satisfies~\eqref{pbCSM} with $\psi=\varphi_\varepsilon$ and $\varphi=\widehat{\varphi}_\varepsilon$. In Proposition~\ref{prop_phirlayer}, we show that $\varphi_\varepsilon$ is a layer function associated to $E_\varepsilon^\delta$ in the sense of Cabr\'e and Sol\`a-Morales. Hence by~\cite[Lemma~3.1]{CSM05}, it follows that $\varphi_\varepsilon=\widehat{\varphi}_\varepsilon$ is a minimizer of $E_\varepsilon^\delta(\cdot;B_r)$ with the boundary condition $\varphi_\varepsilon$ on $\partial B_r^+ \cap \mathbb{R}_+^2$. We finally conclude since $r>0$ is arbitrary.

\begin{proposition}
\label{prop_uminimizer}
Let $\varphi_\varepsilon \in H^1_\mathrm{loc}(\overline{\mathbb{R}_+^2})$ be a local minimizer of $E_\varepsilon^\delta$ in $\mathbb{R}_+^2$ in the sense of De Giorgi such that~\eqref{conditionsminimizer} is satisfied, i.e.
\begin{equation*}
\lim\limits_{x_1 \rightarrow +\infty} \varphi_\varepsilon(x_1,0)=0, \ 
\lim\limits_{x_1 \rightarrow -\infty} \varphi_\varepsilon(x_1,0)=\pi \text{ and } \left[ (x_1,x_2) \mapsto \varphi_\varepsilon(x_1,x_2)-\delta_2 x_2 \right] \in L^\infty(\mathbb{R}_+^2).
\end{equation*}
Then
\begin{equation*}
\varphi_\varepsilon(x_1,x_2)=\frac{\pi}{2}-\arctan \left( \frac{x_1+\varepsilon a}{x_2+\varepsilon} \right)+\delta_2x_2
\end{equation*}
for some $a \in \mathbb{R}$.
\end{proposition}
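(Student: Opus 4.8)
The plan is to reduce the statement to the classification of solutions of the modified Peierls--Nabarro problem obtained in Theorem~\ref{thmSolPNlambda}, and then to isolate the unique admissible family by means of the boundary conditions~\eqref{conditionsminimizer}.

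First I would observe that a local minimizer in the sense of De Giorgi is in particular a critical point in the sense of Definition~\ref{def_pt_critique}: for $\psi \in C^1(\mathbb{R}^2)$ with compact support, applying the minimality inequality to $t\psi$ --- whose support equals $\mathrm{Supp}(\psi)$ for $t \neq 0$ and is empty for $t=0$ --- shows that $t \mapsto E_\varepsilon^\delta(\varphi_\varepsilon + t\psi; \mathrm{Supp}(\psi))$ attains a minimum at $t=0$, hence has vanishing derivative there. By Proposition~\ref{propsystFeps}, $\varphi_\varepsilon$ is then of class $C^\infty(\overline{\mathbb{R}_+^2})$ and solves~\eqref{systFeps}. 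Consequently the rescaled function $\phi_\varepsilon$ of~\eqref{phirepsilon} is harmonic in $\mathbb{R}_+^2$, of class $C^\infty(\mathbb{R}_+^2) \cap C^1(\overline{\mathbb{R}_+^2})$, and satisfies~\eqref{systphirepsilon} with $\lambda_\varepsilon = 2\varepsilon\delta_2$. Writing $\phi_\varepsilon(x_1,x_2) - \lambda_\varepsilon x_2 = 2\bigl(\varphi_\varepsilon(\varepsilon x_1,\varepsilon x_2) - \delta_2 \varepsilon x_2\bigr) + \pi$, the $L^\infty$ assumption in~\eqref{conditionsminimizer} translates into the boundedness of $(x_1,x_2) \mapsto \phi_\varepsilon(x_1,x_2) - \lambda_\varepsilon x_2$ on $\mathbb{R}_+^2$; hence $\phi_\varepsilon$ is a solution of~(PN$_{\lambda_\varepsilon}$).

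Next I would apply Theorem~\ref{thmSolPNlambda} with $\lambda = \lambda_\varepsilon$ and use the two limits in~\eqref{conditionsminimizer} to select the relevant solution. Since $\phi_\varepsilon(x_1,0) = 2\varphi_\varepsilon(\varepsilon x_1,0) + \pi$ and $\varepsilon > 0$, the conditions $\varphi_\varepsilon(x_1,0) \to 0$ as $x_1 \to +\infty$ and $\varphi_\varepsilon(x_1,0) \to \pi$ as $x_1 \to -\infty$ become $\phi_\varepsilon(x_1,0) \to \pi$ as $x_1 \to +\infty$ and $\phi_\varepsilon(x_1,0) \to 3\pi$ as $x_1 \to -\infty$. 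The constant solutions $n\pi + \lambda_\varepsilon x_2$ have constant trace on $\mathbb{R} \times \{0\}$, and the $x_1$-periodic solutions~\eqref{solPNperiodic} have nonconstant periodic trace; neither family can exhibit the prescribed distinct limits at $\pm\infty$, so both are excluded. For the nonperiodic solutions~\eqref{solPNnonperiodic}, the trace $2n\pi \pm 2\arctan(x_1 + a)$ tends to $2n\pi \pm \pi$ at $+\infty$ and to $2n\pi \mp \pi$ at $-\infty$; matching these with $\pi$ and $3\pi$ forces the $-$ sign and $n = 1$, so that $\phi_\varepsilon(x_1,x_2) = 2\pi - 2\arctan\!\bigl(\tfrac{x_1+a}{1+x_2}\bigr) + \lambda_\varepsilon x_2$ for some $a \in \mathbb{R}$.

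Finally I would undo the rescaling via $\varphi_\varepsilon(x_1,x_2) = \tfrac12\bigl(\phi_\varepsilon(x_1/\varepsilon, x_2/\varepsilon) - \pi\bigr)$; using $\lambda_\varepsilon = 2\varepsilon\delta_2$ and the simplification $\tfrac{x_1/\varepsilon + a}{1 + x_2/\varepsilon} = \tfrac{x_1 + \varepsilon a}{x_2 + \varepsilon}$ one gets $\varphi_\varepsilon(x_1,x_2) = \tfrac{\pi}{2} - \arctan\!\bigl(\tfrac{x_1+\varepsilon a}{x_2+\varepsilon}\bigr) + \delta_2 x_2$, which is exactly~\eqref{localminimizer}. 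The only genuinely delicate point is the bookkeeping in the middle step: transporting the boundary limits correctly through the rescaling and checking that the classification of Theorem~\ref{thmSolPNlambda} leaves precisely this one family; everything else is routine. Note that this argument uses only that $\varphi_\varepsilon$ is a critical point satisfying~\eqref{conditionsminimizer}; full minimality is not needed here, but will be needed for the converse statement that the functions~\eqref{localminimizer} are indeed local minimizers.
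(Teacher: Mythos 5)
Your proof is correct and follows essentially the same route as the paper: reduce via Proposition~\ref{propsystFeps} and the rescaling~\eqref{phirepsilon} to the problem~(PN$_{\lambda_\varepsilon}$), invoke the classification of Theorem~\ref{thmSolPNlambda}, and use the limits in~\eqref{conditionsminimizer} to rule out the constant and periodic families and to force $(n,\pm)=(1,-)$. The only differences are cosmetic: you make explicit the standard step that a De Giorgi local minimizer is a critical point, and you impose the boundary limits on $\phi_\varepsilon$ before undoing the rescaling rather than after, as the paper does.
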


\begin{proof}
By Proposition~\ref{propsystFeps}, $\varphi_\varepsilon \in C^\infty(\overline{\mathbb{R}_+^2})$ and satisfies~\eqref{systFeps}. Set
\begin{equation*}
\phi_\varepsilon \colon (x_1,x_2) \in \overline{\mathbb{R}_+^2} \mapsto 2\varphi_\varepsilon(\varepsilon x_1, \varepsilon x_2)+\pi,
\end{equation*}
as in~\eqref{phirepsilon}. Then $\phi_\varepsilon$ satisfies~\eqref{systphirepsilon}, i.e.
\begin{equation*}
\left\lbrace \begin{array}{rcll}
\Delta \phi_\varepsilon&=&0&\text{ in } \mathbb{R}_+^2,
\\ \partial_2\phi_\varepsilon-\lambda_\varepsilon+\sin(\phi_\varepsilon) &=&0 & \text{ on } \mathbb{R} \times \left\lbrace 0 \right\rbrace,
\end{array} \right.
\end{equation*}
with $\lambda_\varepsilon=2 \varepsilon \delta_2$. Moreover $\phi_\varepsilon \in C^\infty(\overline{\mathbb{R}_+^2})$ and $(x_1,x_2) \mapsto \phi_\varepsilon(x_1,x_2)-\lambda_\varepsilon x_2$ is bounded in $\mathbb{R}_+^2$, because
\begin{equation*}
\phi_\varepsilon(x_1,x_2)-\lambda_\varepsilon x_2
= 2\varphi_\varepsilon(\varepsilon x_1,\varepsilon x_2)+\pi-2\varepsilon \delta_2 x_2
= 2 \underbrace{\left( \varphi_\varepsilon(\varepsilon x_1,\varepsilon x_2)-\delta_2 \varepsilon x_2 \right)}_{\in L^\infty(\mathbb{R}_+^2)}+\pi
\end{equation*}
for every $(x_1,x_2) \in \mathbb{R}_+^2$. It follows that $\phi_\varepsilon$ satisfies~(PN$_{\lambda_\varepsilon}$). By Theorem~\ref{thmSolPNlambda}, $\phi_\varepsilon$ must be one of the three following types of functions:
\begin{itemize}
\item[--] Firstly, the functions $(x_1,x_2) \mapsto n\pi+\lambda_\varepsilon x_2$ for some $n \in \mathbb{Z}$. However, on the boundary line $\mathbb{R} \times \left\lbrace 0 \right\rbrace$, this functions are constant (equal to $n\pi$). Hence, the boundary conditions $\lim_{x_1 \rightarrow +\infty} \varphi_\varepsilon(x_1,0)=0$ and $\lim_{x_1 \rightarrow -\infty} \varphi_\varepsilon(x_1,0)=\pi$ cannot be satisfied, and $\phi_\varepsilon$ is not of this first form.
\item[--] Secondly, the $x_1$-periodic functions given by~\eqref{solPNperiodic}. However, the boundary conditions \linebreak $\lim_{x_1 \rightarrow +\infty} \varphi_\varepsilon(x_1,0)=0$ and $\lim_{x_1 \rightarrow -\infty} \varphi_\varepsilon(x_1,0)=\pi$ are not compatible with the periodicity in the variable $x_1$. Thus $\phi_\varepsilon$ is not of this second form.
\item[--] Thirdly, the functions 
\begin{equation*}
(x_1,x_2) \mapsto 2n\pi \pm 2\arctan \left( \frac{x_1+a}{x_2+1} \right) + \lambda_\varepsilon x_2
\end{equation*}
for some $n \in \mathbb{Z}$ and $a \in \mathbb{R}$. Coming back to $\varphi_\varepsilon$ instead of $\phi_\varepsilon$, we get, for every $(x_1,x_2) \in \overline{\mathbb{R}_+^2}$,
\begin{equation*}
2\varphi_\varepsilon(\varepsilon x_1,\varepsilon x_2)+\pi
= 2n\pi \pm 2\arctan \left( \frac{x_1+a}{x_2+1} \right) + 2\varepsilon \delta_2 x_2,
\end{equation*}
i.e.
\begin{equation*}
\varphi_\varepsilon(x_1,x_2)= n\pi \pm \arctan \left( \frac{x_1+\varepsilon a}{x_2+\varepsilon} \right) +\delta_2 x_2 - \frac{\pi}{2}.
\end{equation*}
Since $\lim_{x_1 \rightarrow +\infty} \varphi_\varepsilon(x_1,0)=0$, we deduce that $n\pi \pm \frac{\pi}{2}-\frac{\pi}{2}=0$. It follows that the pair $(n,\pm)$ is either $(1,-)$ or $(0,+)$. As $\lim_{x_1 \rightarrow -\infty} \varphi_\varepsilon(x_1,x_2)=\pi$, we also have $n\pi \mp \frac{\pi}{2}-\frac{\pi}{2}=\pi$, hence the only possible pair $(n,\pm)$ is $(1,-)$. As a consequence, for every $(x_1,x_2) \in \overline{\mathbb{R}_+^2}$,
\begin{equation*}
\varphi_\varepsilon(x_1,x_2)= \frac{\pi}{2} - \arctan \left( \frac{x_1+\varepsilon a}{x_2+\varepsilon} \right) +\delta_2 x_2.
\end{equation*}
\end{itemize}
\end{proof}

\begin{proposition}
\label{prop_phirzeropi}
Let $r>0$ and $\varphi_\varepsilon$ be given by~\eqref{localminimizer}. Let $\widetilde{\varphi}_\varepsilon$ be a minimizer of $E_\varepsilon^\delta(\cdot;B_r)$ such that $\widetilde{\varphi}_\varepsilon=\varphi_\varepsilon$ on $\partial B_r^+ \cap \mathbb{R}_+^2$. Then there exists a minimizer $\widehat{\varphi}_\varepsilon$ of $E_\varepsilon^\delta(\cdot;B_r)$ that satisfies $\widehat{\varphi}_\varepsilon=\varphi_\varepsilon$ on $\partial B_r^+ \cap \mathbb{R}_+^2$ and $0 \leqslant \widehat{\varphi}_\varepsilon(x_1,x_2)-\delta_2x_2 \leqslant \pi$ for every $(x_1,x_2) \in \overline{B_r^+}$.
\end{proposition}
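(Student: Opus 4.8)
The plan is to reduce the statement to a standard truncation argument; the point that makes this legitimate in the presence of the Dzyaloshinskii-Moriya term is that, once the trace on the curved arc $\partial B_r^+ \cap \mathbb{R}_+^2$ is prescribed, the $\delta_1$-part of that term is a null Lagrangian and the $\delta_2$-part is absorbed by an affine shift. Concretely, for $\varphi \in H^1(B_r^+)$ I set $w := \varphi - \delta_2 x_2$; an elementary computation gives $\vert \nabla \varphi \vert^2 - 2 \delta \cdot \nabla \varphi = \vert \nabla w \vert^2 - 2 \delta_1 \partial_1 w - \delta_2^2$ in $B_r^+$, while $\varphi = w$ on $B_r \cap (\mathbb{R} \times \left\lbrace 0 \right\rbrace)$, whence
\begin{equation*}
E_\varepsilon^\delta(\varphi;B_r)
= \frac{1}{2} \int_{B_r^+} \vert \nabla w \vert^2 \, \mathrm{d} x
- \delta_1 \int_{B_r^+} \partial_1 w \, \mathrm{d} x
- \frac{\delta_2^2}{2} \vert B_r^+ \vert
+ \frac{1}{2\varepsilon} \int_{B_r \cap (\mathbb{R} \times \left\lbrace 0 \right\rbrace)} \sin^2 w \, \mathrm{d} \mathcal{H}^1 .
\end{equation*}
By the divergence theorem $\int_{B_r^+} \partial_1 w \, \mathrm{d} x = \int_{\partial B_r^+} w \, \nu_1 \, \mathrm{d} \mathcal{H}^1$, and since the outer unit normal to $B_r^+$ along $B_r \cap (\mathbb{R} \times \left\lbrace 0 \right\rbrace)$ is $-e_2$, one has $\nu_1 = 0$ there, so this integral only depends on the trace of $w$ on $\partial B_r^+ \cap \mathbb{R}_+^2$, which for every competitor equal to $\varphi_\varepsilon$ on that arc is the fixed function $\varphi_\varepsilon - \delta_2 x_2$. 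Hence minimizing $E_\varepsilon^\delta(\cdot;B_r)$ among such competitors is equivalent, up to an additive constant, to minimizing $J(w) := \frac{1}{2} \int_{B_r^+} \vert \nabla w \vert^2 \, \mathrm{d} x + \frac{1}{2\varepsilon} \int_{B_r \cap (\mathbb{R} \times \left\lbrace 0 \right\rbrace)} \sin^2 w \, \mathrm{d} \mathcal{H}^1$ over maps with trace $\varphi_\varepsilon - \delta_2 x_2$ on the arc.

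The second step is truncation by the stable values $0$ and $\pi$ of $\sin^2$. From~\eqref{localminimizer} the prescribed trace $\varphi_\varepsilon - \delta_2 x_2 = \frac{\pi}{2} - \arctan\!\big( \frac{x_1 + \varepsilon a}{x_2 + \varepsilon} \big)$ lies in $(0,\pi)$. I set $\widetilde{w} := \widetilde{\varphi}_\varepsilon - \delta_2 x_2$ (a minimizer of $J$ with that trace, by the first step) and $\widehat{w} := \min(\pi, \max(0, \widetilde{w})) \in H^1(B_r^+)$. Then $\widehat{w}$ has the same trace as $\widetilde{w}$ on the arc, so it is admissible; moreover $\widehat{w}$ is a $1$-Lipschitz truncation of $\widetilde{w}$ that is locally constant where active, so $\vert \nabla \widehat{w} \vert \leqslant \vert \nabla \widetilde{w} \vert$ almost everywhere; and pointwise on $B_r \cap (\mathbb{R} \times \left\lbrace 0 \right\rbrace)$ one has $\sin^2 \widehat{w} \leqslant \sin^2 \widetilde{w}$, with equality where $\widetilde{w} \in [0,\pi]$ and $\sin^2 \widehat{w} = 0 \leqslant \sin^2 \widetilde{w}$ where $\widetilde{w} \notin [0,\pi]$. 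Therefore $J(\widehat{w}) \leqslant J(\widetilde{w})$, and since $\widehat{w}$ is admissible it is also a minimizer of $J$.

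Setting $\widehat{\varphi}_\varepsilon := \widehat{w} + \delta_2 x_2$ then yields, through the equivalence of the first step, a minimizer of $E_\varepsilon^\delta(\cdot;B_r)$ with $\widehat{\varphi}_\varepsilon = \varphi_\varepsilon$ on $\partial B_r^+ \cap \mathbb{R}_+^2$, and $0 \leqslant \widehat{\varphi}_\varepsilon - \delta_2 x_2 = \widehat{w} \leqslant \pi$ in $\overline{B_r^+}$ by the definition of $\widehat{w}$; this is the assertion. I do not anticipate a genuine obstacle: the delicate point is precisely the first step, which shows that for a fixed trace on the curved arc the Dzyaloshinskii-Moriya contribution splits into a null Lagrangian and an affine shift, so that truncating $w$ toward $0$ and $\pi$ decreases every remaining term of the energy — without this, truncation in the presence of the Dzyaloshinskii-Moriya term would not obviously lower $E_\varepsilon^\delta$.
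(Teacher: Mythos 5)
Your proposal is correct and is essentially the paper's own argument: you construct exactly the same competitor $\widehat{\varphi}_\varepsilon = \delta_2 x_2 + \min\left( \pi, \max\left( 0, \widetilde{\varphi}_\varepsilon - \delta_2 x_2 \right) \right)$ and rely on the same three facts — the $\delta_1$-term is fixed by the trace on the arc via the divergence theorem with $\nu_1=0$ on $B_r \cap (\mathbb{R} \times \left\lbrace 0 \right\rbrace)$, the $\delta_2$-term is neutralized by the affine shift (the paper writes this as the completed square $\left\vert \partial_2 \varphi - \delta_2 \right\vert^2 - \delta_2^2$), and truncation decreases both the Dirichlet and the $\sin^2$ boundary terms. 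The only difference is presentational: you change variables to $w = \varphi - \delta_2 x_2$ and reduce to the standard functional before truncating, whereas the paper truncates first and compares the energy term by term in the original variable.
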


\begin{proof}
We define $\widehat{\varphi}_\varepsilon$ in $\overline{B_r^+}$ as follows:
\begin{equation}
\label{phirtildezeropi}
\widehat{\varphi}_\varepsilon(x_1,x_2) = \delta_2x_2 +
\left\lbrace \begin{array}{ll}
\widetilde{\varphi}_\varepsilon(x_1,x_2)-\delta_2x_2 & \text{ if } 0 \leqslant \widetilde{\varphi}_\varepsilon(x_1,x_2)-\delta_2x_2 \leqslant \pi,
\\
0 & \text{ if } \widetilde{\varphi}_\varepsilon(x_1,x_2)-\delta_2x_2 \leqslant 0,
\\
\pi & \text{ if } \widetilde{\varphi}_\varepsilon(x_1,x_2)-\delta_2x_2 \geqslant \pi.
\end{array}
\right.
\end{equation}
It is obvious that $0 \leqslant \widehat{\varphi}_\varepsilon(x_1,x_2)-\delta_2x_2 \leqslant \pi$ in $\overline{B_r^+}$. Moreover, on $\partial B_r^+ \cap \mathbb{R}_+^2$,
\begin{equation*}
\widetilde{\varphi}_\varepsilon(x_1,x_2)-\delta_2x_2 = \varphi_\varepsilon(x_1,x_2)-\delta_2x_2 \in [0,\pi],
\end{equation*} 
thus $\widehat{\varphi}_\varepsilon(x_1,x_2)
=\delta_2x_2+\widetilde{\varphi}_\varepsilon(x_1,x_2)-\delta_2x_2
=\widetilde{\varphi}_\varepsilon(x_1,x_2)
=\varphi_\varepsilon(x_1,x_2)$. For proving that $\widehat{\varphi}_\varepsilon$ is a minimizer of $E_\varepsilon^\delta(\cdot;B_r)$, it suffices to show that $E_\varepsilon^\delta(\widehat{\varphi}_\varepsilon;B_r) \leqslant E_\varepsilon^\delta(\widetilde{\varphi}_\varepsilon;B_r)$.

For the boundary integral on $B_r \cap (\mathbb{R} \times \left\lbrace 0 \right\rbrace)=(-r,r) \times \left\lbrace 0 \right\rbrace$, we note that on this line \linebreak segment, $\widehat{\varphi}_\varepsilon$ is equal either to $\widetilde{\varphi}_\varepsilon$, or to $0$, or to $\pi$. Thus $\sin^2 \widehat{\varphi}_\varepsilon \leqslant \sin^2 \widetilde{\varphi}_\varepsilon$, and
\begin{equation*}
\int_{(-r,r) \times \left\lbrace 0 \right\rbrace} \sin^2 \widehat{\varphi}_\varepsilon \ \mathrm{d} \mathcal{H}^1 \leqslant \int_{(-r,r) \times \left\lbrace 0 \right\rbrace} \sin^2 \widetilde{\varphi}_\varepsilon \ \mathrm{d} \mathcal{H}^1.
\end{equation*}

For the interior integral, we note that 
\begin{align*}
& \int_{B_r^+} \left( \left\vert \nabla \widehat{\varphi}_\varepsilon \right\vert^2 - 2\delta \cdot \nabla \widehat{\varphi}_\varepsilon \right) \mathrm{d} x
\\
& \qquad = 
\int_{B_r^+} \left( \left\vert \partial_1\widehat{\varphi}_\varepsilon \right\vert^2 - 2\delta_1 \partial_1\widehat{\varphi}_\varepsilon \right) \mathrm{d} x
+ \int_{B_r^+} \left\vert \partial_2\widehat{\varphi}_\varepsilon-\delta_2 \right\vert^2 \mathrm{d} x
- \delta_2^2 \left\vert B_r^+ \right\vert. 
\end{align*}
On the one hand, using~\eqref{phirtildezeropi}, we have
\begin{align*}
\left\vert \partial_1 \widehat{\varphi}_\varepsilon \right\vert
& =
\left\lbrace
\begin{array}{cl}
\left\vert \partial_1\widetilde{\varphi}_\varepsilon \right\vert
& \text{ if } 0 \leqslant \widetilde{\varphi}_\varepsilon(x_1,x_2)-\delta_2x_2 \leqslant \pi,
\\
0
& \text{ elsewhere},
\end{array}
\right.
\end{align*}
thus $\left\vert \partial_1 \widehat{\varphi}_\varepsilon \right\vert \leqslant \left\vert \partial_1 \widetilde{\varphi}_\varepsilon \right\vert$ in $B_r^+$, and $\int_{B_r^+} \left\vert \partial_1\widehat{\varphi}_\varepsilon \right\vert^2 \mathrm{d} x \leqslant \int_{B_r^+} \left\vert \partial_1\widetilde{\varphi}_\varepsilon \right\vert^2 \mathrm{d} x$. Moreover, 
\begin{equation*}
\int_{B_r^+} \partial_1 \widehat{\varphi}_\varepsilon \ \mathrm{d} x
= \int_{\partial B_r^+ \cap \mathbb{R}_+^2} \widehat{\varphi}_\varepsilon \nu_1 \ \mathrm{d} \mathcal{H}^1
= \int_{\partial B_r^+ \cap \mathbb{R}_+^2} \varphi_\varepsilon \nu_1 \ \mathrm{d} \mathcal{H}^1
= \int_{\partial B_r^+ \cap \mathbb{R}_+^2} \widetilde{\varphi}_\varepsilon \nu_1 \ \mathrm{d} \mathcal{H}^1
= \int_{B_r^+} \partial_1 \widetilde{\varphi}_\varepsilon \ \mathrm{d} x,
\end{equation*}
using that $\nu_1=0$ on $B_r \cap (\mathbb{R} \times \left\lbrace 0 \right\rbrace)$. On the other hand, using~\eqref{phirtildezeropi},
\begin{equation*}
\left\vert \partial_2 \widehat{\varphi}_\varepsilon-\delta_2 \right\vert =
\left\lbrace
\begin{array}{cl}
\left\vert \partial_2\widetilde{\varphi}_\varepsilon-\delta_2 \right\vert
& \text{ if } 0 \leqslant \widetilde{\varphi}_\varepsilon(x_1,x_2)-\delta_2x_2 \leqslant \pi,
\\
0
& \text{ elsewhere},
\end{array}
\right.
\end{equation*}
thus $\left\vert \partial_2 \widehat{\varphi}_\varepsilon-\delta_2 \right\vert \leqslant \left\vert \partial_2 \widetilde{\varphi}_\varepsilon-\delta_2 \right\vert$ in $B_r^+$, and $\int_{B_r^+} \left\vert \partial_2 \widehat{\varphi}_\varepsilon-\delta_2 \right\vert^2 \mathrm{d} x \leqslant \int_{B_r^+} \left\vert \partial_2 \widetilde{\varphi}_\varepsilon-\delta_2 \right\vert^2 \mathrm{d} x$. Combining the above inequalities comparing the boundary and interior parts of $E_\varepsilon^\delta(\widehat{\varphi}_\varepsilon;B_r)$ and $E_\varepsilon^\delta(\widetilde{\varphi}_\varepsilon;B_r)$, we deduce that $E_\varepsilon^\delta(\widehat{\varphi}_\varepsilon;B_r) \leqslant E_\varepsilon^\delta(\widetilde{\varphi}_\varepsilon;B_r)$.
\end{proof}

\begin{proposition}
\label{prop_phirlayer}
Let $\varphi_\varepsilon$ be given by~\eqref{localminimizer}, i.e.
\begin{equation*}
\varphi_\varepsilon \colon (x_1,x_2) \in \overline{\mathbb{R}_+^2} \mapsto \frac{\pi}{2} - \arctan \left( \frac{x_1+\varepsilon a}{x_2+\varepsilon} \right) + \delta_2x_2
\end{equation*}
for some $a \in \mathbb{R}$. Then $\varphi_\varepsilon$ is a layer function associated to $E_\varepsilon^\delta$ in the sense of Cabr\'e and Sol\`a-Morales.
\end{proposition}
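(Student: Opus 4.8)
The plan is to check the three defining requirements of Definition~\ref{def_layer_function} one after the other, each by a direct computation on the explicit expression
\[
\varphi_\varepsilon(x_1,x_2) = \frac{\pi}{2} - \arctan\left(\frac{x_1+\varepsilon a}{x_2+\varepsilon}\right) + \delta_2 x_2 .
\]

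First I would check that $\varphi_\varepsilon \in C^\infty(\overline{\mathbb{R}_+^2})$ and $\Delta\varphi_\varepsilon = 0$ in $\mathbb{R}_+^2$. Smoothness up to the boundary is immediate, since on $\overline{\mathbb{R}_+^2}$ the denominator $x_2+\varepsilon$ stays bounded below by $\varepsilon>0$. Harmonicity follows by noting that $(x_1,x_2)\mapsto\arctan\left(\frac{x_1+\varepsilon a}{x_2+\varepsilon}\right)$ coincides, up to an additive constant, with the argument of the point $(x_1+\varepsilon a)+i(x_2+\varepsilon)$, which lies in the open half-plane $\{x_2>-\varepsilon\}\supset\overline{\mathbb{R}_+^2}$, hence is harmonic there; and $\delta_2 x_2$ is harmonic as well. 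Equivalently, $\varphi_\varepsilon$ is precisely the function in~\eqref{criticEepsilonD} with $(n,\pm)=(1,-)$, which has already been shown to satisfy~\eqref{systFeps} through the correspondence~\eqref{phirepsilon}; I would still present the computations below for completeness.

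Second, I would verify the boundary condition $\partial_2\varphi_\varepsilon = \frac{1}{2\varepsilon}\sin(2\varphi_\varepsilon) + \delta_2$ on $\mathbb{R}\times\{0\}$ directly. Differentiating gives $\partial_2\varphi_\varepsilon(x_1,0) = \frac{x_1+\varepsilon a}{\varepsilon^2+(x_1+\varepsilon a)^2} + \delta_2$. On the other hand, setting $\theta := \arctan\left(\frac{x_1+\varepsilon a}{\varepsilon}\right)$ one has $\varphi_\varepsilon(x_1,0) = \frac{\pi}{2}-\theta$, so that $\sin(2\varphi_\varepsilon(x_1,0)) = \sin(2\theta) = \frac{2\tan\theta}{1+\tan^2\theta} = \frac{2\varepsilon(x_1+\varepsilon a)}{\varepsilon^2+(x_1+\varepsilon a)^2}$; dividing by $2\varepsilon$ and adding $\delta_2$ reproduces $\partial_2\varphi_\varepsilon(x_1,0)$ exactly, which is the claim.

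Third, I would restrict to the boundary line, where $\varphi_\varepsilon(x_1,0) = \frac{\pi}{2}-\arctan\left(\frac{x_1+\varepsilon a}{\varepsilon}\right)$, and read off the remaining items of Definition~\ref{def_layer_function}: $\partial_1\varphi_\varepsilon(x_1,0) = -\frac{\varepsilon^{-1}}{1+((x_1+\varepsilon a)/\varepsilon)^2} < 0$ for every $x_1\in\mathbb{R}$, while $\arctan\left(\frac{x_1+\varepsilon a}{\varepsilon}\right)\to\pm\frac{\pi}{2}$ as $x_1\to\pm\infty$ yields $\varphi_\varepsilon(\cdot,0)\to 0$ at $+\infty$ and $\varphi_\varepsilon(\cdot,0)\to\pi$ at $-\infty$. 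Combining the three steps shows that $\varphi_\varepsilon$ is a layer function associated to $E_\varepsilon^\delta$ in the sense of Cabr\'e and Sol\`a-Morales. There is no genuine obstacle in this proof; the only step requiring a little care is the trigonometric manipulation in the second paragraph, and even that can be bypassed by simply invoking that the functions~\eqref{criticEepsilonD} solve~\eqref{systFeps}.
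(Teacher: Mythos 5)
Your proposal is correct and follows essentially the same route as the paper: a direct verification of the three requirements of Definition~\ref{def_layer_function} (smoothness and harmonicity, the boundary condition $\partial_2\varphi_\varepsilon=\frac{1}{2\varepsilon}\sin(2\varphi_\varepsilon)+\delta_2$, monotonicity and the limits on $\mathbb{R}\times\{0\}$). The only difference is that you spell out the trigonometric computation the paper dismisses as a "standard calculation", and your computations check out.
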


\begin{proof}
It is clear that $\varphi_\varepsilon \in C^\infty(\overline{\mathbb{R}_+^2})$ and is a harmonic function in $\mathbb{R}_+^2$. The boundary condition
\begin{equation*}
\partial_2\varphi_\varepsilon(x_1,0)=\frac{1}{2\varepsilon}\sin(2\varphi_\varepsilon(x_1,0))+\delta_2
\end{equation*}
for every $x_1 \in \mathbb{R}$ follows from a standard calculation. Moreover, for every $x_1 \in \mathbb{R}$,
\begin{equation*}
\partial_1\varphi_\varepsilon(x_1,0)=\frac{-\varepsilon}{(x_1+\varepsilon a)^2+\varepsilon^2}<0,
\end{equation*}
and we also have $\lim_{x_1 \rightarrow +\infty} \varphi_\varepsilon(x_1,0)=\frac{\pi}{2}-\frac{\pi}{2}=0$ and $\lim_{x_1 \rightarrow -\infty} \varphi_\varepsilon(x_1,0)=\frac{\pi}{2}-\left(-\frac{\pi}{2}\right)=\pi$.
\end{proof}

\begin{proof}[Proof of Theorem~\ref{thm_phir_minimizer}]
Let $\varphi_\varepsilon \in H^1_\mathrm{loc}(\overline{\mathbb{R}_+^2})$ be a local minimizer of $E_\varepsilon^\delta$ in $\mathbb{R}_+^2$ in the sense of De Giorgi that satisfies~\eqref{conditionsminimizer}. By Proposition~\ref{prop_uminimizer}, $\varphi_\varepsilon$ is of the form~\eqref{localminimizer}.

It remains to check that functions $\varphi_\varepsilon$ of the form~\eqref{localminimizer} are local minimizers of $E_\varepsilon^\delta$ in $\mathbb{R}_+^2$ in the sense of De Giorgi that satisfy~\eqref{conditionsminimizer}. Let $\varphi_\varepsilon$ be given by~\eqref{localminimizer}. Then conditions~\eqref{conditionsminimizer} are directly satisfied and by Proposition~\ref{prop_phirlayer}, $\varphi_\varepsilon$ is a layer function associated to $E_\varepsilon^\delta$ in the sense of Cabr\'e and Sol\`a-Morales. Let $r>0$. By the direct method in the calculus of variations, $E_\varepsilon^\delta(\cdot;B_r)$ admits a minimizer $\widetilde{\varphi}_\varepsilon \in H_1(B_r^+)$ such that $\widetilde{\varphi}_\varepsilon=\varphi_\varepsilon$ on $\partial B_r^+ \cap \mathbb{R}_+^2$. By Proposition~\ref{prop_phirzeropi}, there exists a minimizer $\widehat{\varphi}_\varepsilon$ of $E_\varepsilon^\delta(\cdot;B_r)$ such that $\widehat{\varphi}_\varepsilon=\varphi_\varepsilon$ on $\partial B_r^+ \cap \mathbb{R}_+^2$ and $0 \leqslant \widehat{\varphi}_\varepsilon(x_1,x_2)-\delta_2x_2 \leqslant \pi$ for \linebreak every $(x_1,x_2) \in \overline{B_r^+}$. Moreover,
\begin{equation*}
\left\lbrace \begin{array}{rcll} \Delta \widehat{\varphi}_\varepsilon & = & 0 & \text{ in } B_r^+, \\ \partial_2 \widehat{\varphi}_\varepsilon & = & \frac{1}{2\varepsilon} \sin 2\widehat{\varphi}_\varepsilon + \delta_2 & \text{ on } (-r,r) \times \left\lbrace 0 \right\rbrace.
\end{array} \right.
\end{equation*}
similarly than in Proposition~\ref{propsystFeps}. It follows that \eqref{pbCSM} is satisfied with $\psi=\varphi_\varepsilon$ and $\varphi=\widehat{\varphi}_\varepsilon$. By~\cite[Lemma~3.1]{CSM05}, $\varphi_\varepsilon=\widehat{\varphi}_\varepsilon$, thus $\varphi_\varepsilon$ is a minimizer of $E_\varepsilon^\delta(\cdot;B_r)$ with the boundary condition $\varphi_\varepsilon$ \linebreak on $\partial B_r^+ \cap \mathbb{R}_+^2$. This fact being true for every $r>0$, $\varphi_\varepsilon$ is a local minimizer of $E_\varepsilon^\delta$ in $\mathbb{R}_+^2$ in the sense of De Giorgi.
\end{proof}

\begin{remarque}
In dimension greater than two, solving~\eqref{PNlambda} and finding local minimizers of $E_\varepsilon^\delta$ in the sense of De Giorgi is more difficult because there is no classification as for the Benjamin-Ono problem. In the case where $\delta=0$, Cabr\'e and Sol\`a-Morales~\cite{CSM05} introduce the notion of layer solution of~(PN$_0$) (or similar problems with a different nonlinearity instead of $\sin$) and show that a layer solution of~(PN$_0$) is a local minimizer of $E_\varepsilon^0$ in the sense of De Giorgi in any dimension.
\end{remarque}

\bibliographystyle{plain}

\end{document}